\numberwithin{equation}{section}
\newtheorem{lm}{Lemma}
\newtheorem{thrm}{Theorem}
\newtheorem{cor}{Corollary}
\newtheorem{defn}{Definition}
\newcommand{\domain}{\Omega}
\newcommand{\fb}{F\!B}
\newcommand{\ep}{\varepsilon}
\newcommand{\asj}{a^s_{ij}}
\newcommand{\pob}{\underline{P_0}}
\newcommand{\pof}{\overline{P_0}}
\begin{document}
\author{Thomas Backing\thanks{Partially Supported by NSF DMS-1101246 and the Purdue Research Foundation}\\ Purdue University}
\date{}

\title{Regularity of the Free Boundary for a Bernoulli-Type Parabolic Problem with Variable Coefficients}

\maketitle
\section{Introduction}

 In this paper we continue the study of a class of parabolic free boundary problems initiated in [B]. Our present goal is to establish that Lipschitz free boundaries are $C^{1,\alpha}$ surfaces in a sense that is stated precisely in Theorem~\ref{thrm:main_thrm}.

The problem under consideration in this work is as follows: Let $u$ be a viscosity solution in a domain $\Omega$ to
\begin{equation}\label{FBP_Statement}
\left\lbrace
\begin{aligned}
&\mathcal{L}u -u_t =0 \quad \text{in } \;\{u>0\}\cup\{u\leq 0\}^\circ\\
 &G(u^+_\nu,u^-_\nu)=1 \quad \text{along } \;\partial\{u>0\}.
\end{aligned}
\right.
\
\end{equation}
Here $\mathcal{L}$ is an elliptic operator with H\"{o}lder continuous coefficients and $G(\;,\;)$ defines the free boundary condition of the problem.

Typical examples of the boundary condition in~\eqref{FBP_Statement} include $u^+_\nu =1$ and $(u^+_\nu)^2-(u^-_\nu)^2 =2M$, $M$ a positive number. Both arise as the free boundary condition for a singular perturbation problem which models combustion. This problem consists of studying the limit as $\ep\rightarrow 0$ of solutions to
\[
\Delta u^\ep -u^\ep_t=\beta_\ep(u^\ep)
\]
where $\beta(s)$ is a Lipschitz function supported on $[0,1]$ with
\[
\int_0^1 \beta(s) \mathrm{d}s =M \quad \text{and} \quad \beta_\ep(s) =\frac{1}{\ep}\beta(\frac{s}{\ep}).
\]

Under the assumption that $u^\ep \geq 0$, it was shown in \cite{CV} that the boundary condition for the limit function $u$ is $u_\nu^+=1$ . In \cite{CLW1}, \cite{CLW2} the two phase version of this problem was studied and the free boundary condition for the limit solution was demonstrated to be $(u^+_\nu)^2 -(u^-_\nu)^2 =2M$. In both cases this free boundary condition holds in a suitable weak sense.

Stationary (i.e. time independent) versions of~\eqref{FBP_Statement} with $\mathcal{L}=\Delta$ were studied in \cite{C1}, \cite{C2}. In these pioneering papers the idea of a viscosity solution to~\eqref{FBP_Statement} was introduced and the key concepts of monotonicity cones and sup-convolutions were introduced. The main result of these works is that Lipschitz free boundaries are smooth, as are sufficiently `flat' free boundaries. In this context `flat' means that the free boundary is close to the graph of a Lipschitz function with suitably small Lipschitz constant.

The first extension of these techniques to a parabolic problem was in \cite{ACS1}, \cite{ACS2}, \cite{ACS3}. The problem studied in these works is the Stefan problem, which models melting/solidification and differs from~\eqref{FBP_Statement} in its free boundary condition which, among other differences, involves the time derivative of $u$. Similar, though not quite as strong, results were proved in these papers as for the elliptic problem studied in \cite{C1}, \cite{C2}. Lipschitz free boundaries were proved to be $C^1$ under a non-degeneracy condition on $u$, and sufficiently flat free boundaries were also proved to be $C^1$. In both cases it was also proved that $u\in C^1(\overline{\Omega^+})\cup C^1(\overline{\Omega^-})
$, so that $u$ satisfies the free boundary condition in a classical sense. Finally, \cite{F} adapted these techniques to the study of~\eqref{FBP_Statement} for the heat equation.

All of the above cited works on the regularity of the free boundary involve either the Laplacian in the stationary case or the heat equation in the parabolic case. The proofs in these papers make extensive use of the fact that directional derivatives of solutions to a constant coefficient linear PDE are themselves solutions to the same equation. Indeed, the most difficult aspect of adapting these methods to the variable coefficient case is that this fact is unavailable. The only progress in adapting these methods to problems with variable coefficients is found in \cite{CFS}\textbf{,} where the authors study an elliptic problem\textbf{,} and in \cite{FS1}, \cite{FS2} where the authors study the Stefan problem with flat free boundaries.

In this work we adapt these methods and use them to study the regularity of the free boundary to solutions of~\eqref{FBP_Statement}. Our main result is that the free boundary is a differentiable surface whose normal varies with a H\"{o}lder modulus of continuity and the the free boundary condition is taken up with continuity.

The outline of this work is as follows: In Section 2 we precisely define the problem, the concept of a solution, our assumptions and our main result. In Section 3 we have collected the main tools and known results that we will need in our analysis. Section 4 deals with the interior enlargement of the monotonicity cone while Section 5 contains results that propagate a portion of this enlargement to the free boundary. Finally Section 6 contains the iteration used to prove the regularity of the free boundary in space while Section 7 contains a similar iteration used to prove the regularity in space-time.

\section{Definitions and Statement of Results}
We collect in this section the precise statement and hypotheses of our problem along with the statement of our main result.

We will denote the positivity set of $u$ by $\domain^+$; likewise the negative set is denoted by $\domain^-$. Occasionally we will write $\domain^\pm(u)$ to emphasize the dependence of these domains on the function $u$. The set $\partial\{u>0\}$ is the free boundary and will be denoted by $\fb(u)$ or just $\fb$. In this work we will assume that the free boundary is the graph of a Lipschitz function $f$, that is, it consists of the set $\{(x',x_n,t)| f(x',t)=x_n\}$ with $f(0,0) =0$. Denote by $L$ and $L_0$ the Lipschitz constant of $f$ in space and time respectively.

 The operator in~\eqref{FBP_Statement}
\[
\mathcal{L} =\sum_{i,j} a_{ij}(x,t)D_{ij}
\] has H\"{o}lder continuous coefficients $a_{ij} \in C^{0,\alpha}(\domain)$ with respect to the parabolic distance, $0<\alpha\leq 1$ and there exists $\lambda, \Lambda >0$ such that
\[
\lambda|\xi|^2 \leq \sum a_{ij}(x,t)\xi_i \xi_j \leq \Lambda |\xi|^2
\] for all $(x,t) \in \domain$. Denoting by $A(x,t)$ the matrix $[a_{ij}(x,t)]$,  we assume $A(0,0) =[\delta_{ij}]$ the identity.

On $G(a,b)$ we will require
\begin{enumerate}
\item $G$  Lipschitz with constant $L_G$ in both variables.
\item $G(a_1,b) -G(a_2,b) >c^*(a_1-a_2)^p$ if $a_1>a_2$ (strictly increasing in first variable)
\item $G(a,b_1)-G(a,b_2) < -c^*(b_1-b_2)^p$ if $b_1>b_2$ (strictly decreasing in second variable)
\end{enumerate}
The $p$ appearing here is some positive power.
\begin{defn}(Classical Subsolution/Supersolution)
We say $v(x,t)$ is a classical subsolution (supersolution) to~\eqref{FBP_Statement} if $v \in C^1(\overline{\domain^+(v)})\cup C^1(\overline{\domain^-(v)})$, $\mathcal{L}v-v_t\geq 0$ $(\mathcal{L}v-v_t\leq 0)$ in $\domain^\pm(v)$ and
\[
G(v_\nu^+,v_\nu^-) \geq 1 \;\;(G(v_\nu^+,v_\nu^-) \leq 1), \quad \text{where }\nu= \frac{\nabla v^+}{|\nabla v^+|}\textbf{.}\]
A strict subsolution (supersolution) satisfies the above with strict inequalities.
\end{defn}

\begin{defn}(Viscosity Subsolutions/Supersolutions)
A continuous function $v(x,t)$ is a viscosity subsolution (supersolution) to~\eqref{FBP_Statement} in $\domain$ if for every space-time cylinder $Q= B_r' \times (-T,T) \Subset \domain$ and for every classical supersolution (subsolution) $w$ in $Q$, the inequality $v\leq w$ ($v\geq w)$ on $\partial_pQ$ implies that $v\leq w$ ($v\geq w)$. Additionally, if $w$ is a strict classical supersolution (subsolution), then $v<w$ $(v>w)$ on $\partial_p Q$ implies $v<w$ $(v>w)$ inside $Q$.
\end{defn}

We now turn to the hypotheses on the free boundary of $u$. The main theorem of this work will require that this free boundary is Lipschitz, but will we will also require a non-degeneracy condition to hold at regular points. We first define such points.

\begin{defn}(Regular Points) A point $(x_0,t_0)$ on the free boundary of $u$ is a right regular point if there exists a space-time ball $B_R \subset \domain^+$ such that $B_R \cap \partial \{u>0\} =\{(x_0,t_0)\}$.

 A point $(x_0,t_0)$ on the free boundary of $u$ is a left regular point if there exists a space-time ball $B_R \subset \domain^-$ such that $B_R \cap \partial \{u\leq 0\} =\{(x_0,t_0)\}$.

\end{defn}

We will assume the following non-degeneracy condition on $u$: There exists a $m>0$ such that if $(x_0,t_0)$ is a right regular point for $u$ then
\begin{equation}
\frac{1}{|B'_r(x_0)|}\int_{B'_r(x_0)} u^+ dx \geq mr.
\end{equation}

The main result of this paper is the following theorem.
\begin{thrm}\label{thrm:main_thrm}
Let $u$ be a solution to our free boundary problem in $Q_1$ satisfying the hypotheses of this section. Then for every point $(x,t)$ on the free boundary in $Q_{1/2}$ there exists a normal vector to the surface $\eta(x,t)$. Furthermore, this normal vector satisfies
\begin{enumerate}
\item $|\eta(x,t) -\eta(y,t)| \leq C|x-y|^\alpha$
\item $|\eta(x,s) -\eta(x,t)| \leq C|s-t|^\beta$
\end{enumerate}
Finally, the free boundary condition is taken up with continuity by the solution $u$ so that $u$ is a classical solution to~\eqref{FBP_Statement}.
\end{thrm}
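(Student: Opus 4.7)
The strategy I would follow mirrors the outline of Sections 4--7: at each dyadic scale $r_k = \mu^k$, produce a monotonicity cone for $u$ of progressively larger opening, then extract the normal $\eta$ as the limit of the cone axes. The starting point is that the Lipschitz graph hypothesis on $\fb(u)$ provides an initial spatial monotonicity cone $\Gamma(\theta_0, e_n)$ with $\tan\theta_0 \sim 1/L$, meaning $u(x+\tau e, t) \geq u(x,t)$ whenever $\tau>0$ and $e \in \Gamma(\theta_0, e_n)$; the time Lipschitz constant $L_0$ similarly yields a space-time monotonicity cone. Since $A(0,0) = I$, a parabolic rescaling around the origin makes the operator a small H\"older perturbation of the heat operator at each scale.

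The core of the argument is an induction on $k$: assuming at scale $r_k$ a monotonicity cone of axis $\nu_k$ and opening $\theta_k$ with $\pi/2 - \theta_k \leq C\mu^{\alpha k}$, produce at scale $r_{k+1} = \mu r_k$ a cone of axis $\nu_{k+1}$ and opening $\theta_{k+1}$ with $\pi/2 - \theta_{k+1} \leq C\mu^{\alpha(k+1)}$. I would split this dichotomy step into two pieces. The interior enlargement of Section~4 says that, in a set compactly contained in $\domain^+ \cup \domain^-$ inside $Q_{r_k}$, the opening can be increased by a definite $\delta_k$: each difference quotient $v_{\tau,e}(x,t) = u(x+\tau e, t) - u(x,t)$ is an almost-solution of $\mathcal{L}v - v_t = 0$, and a Harnack/interior-Carleson type estimate promotes strict positivity of $v_{\tau,e}$ on an interior set to strict positivity on a slightly wider cone of directions. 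The boundary propagation of Section~5 then carries this interior gain across $\fb$: using that $G$ is strictly monotone in each argument together with the non-degeneracy $\frac{1}{|B_r'|}\int_{B_r'} u^+ \geq m r$, one builds a strict super- or subsolution that would violate the viscosity definition unless the enlargement persists at the free boundary. The telescoping of Section~6 then gives a Cauchy sequence of axes $\nu_k$ converging geometrically to a limit $\eta$ satisfying item~(1).

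The main obstacle is the variable coefficient nature of $\mathcal{L}$: the difference quotients $v_{\tau,e}$ do not satisfy the homogeneous equation but rather
\[
\mathcal{L} v_{\tau,e}(x,t) - \partial_t v_{\tau,e}(x,t) = \bigl(a_{ij}(x+\tau e, t) - a_{ij}(x,t)\bigr)\, D_{ij} u(x+\tau e, t),
\]
whose right-hand side is controlled only by $\tau^\alpha [A]_{C^\alpha}$ times second derivatives of $u$ in a weak sense. I would absorb this perturbation by choosing $\tau$ small compared to $r_k$, rescaling at each scale so that the rescaled operator is a small perturbation of the heat operator, and then applying Harnack estimates for uniformly parabolic operators with H\"older coefficients; the size of the error then fits neatly into the geometric decay budget $\mu^{\alpha k}$ at each step. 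Once item~(1) is established, a parallel iteration in Section~7 uses the already-established spatial H\"older regularity of $\eta$ to compare $u$ with its time translates and enlarge the opening of a space-time cone containing a timelike direction, yielding item~(2) with a possibly smaller exponent $\beta$. Finally, continuity of $\eta$ together with the non-degeneracy and standard barriers force $u$ to be $C^1$ up to each side of $\fb$ with $u_\nu^\pm$ well defined, so the viscosity identity $G(u_\nu^+, u_\nu^-) = 1$ passes to a classical identity along the free boundary.
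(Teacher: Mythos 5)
Your high-level outline — interior cone enlargement, boundary propagation via the viscosity condition and the monotonicity of $G$, dyadic iteration, space first then time, and a final appeal to boundary gradient continuity — matches the architecture of Sections 4--7. But two genuine technical obstacles that the paper is organized around are missed, and one of your proposed fixes actually runs in the wrong direction.

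First, the interior enlargement is not carried out on difference quotients of $u$. You observe, correctly, that $v_{\tau,e} = u(\cdot + \tau e) - u$ satisfies an inhomogeneous equation whose source is $\bigl(a_{ij}(x)-a_{ij}(x+\tau e)\bigr)D_{ij}u(x+\tau e)$; but you cannot then apply a Harnack inequality to $v_{\tau,e}$ directly, because the source involves $D^2 u$, not $v_{\tau,e}$ itself, and controlling $D^2 u$ in terms of $u$ requires exactly the kind of Schauder machinery that a Harnack inequality alone does not supply. The paper's route is to freeze coefficients: introduce the heat solution $z$ with the same boundary data on a small cylinder $T$, apply the weighted parabolic Schauder estimate to $u-z$ to get $|u-z|^*_{2+\alpha,\Phi}\leq K s^\beta u(\pob)$, observe that $D_\tau z + c s^\beta u(\pob)$ is a nonnegative \emph{genuine} heat solution to which the Harnack inequality applies, and then carry the estimate back to $D_\tau u$ via the same Schauder bound. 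Your sketch leaves the transfer from "almost-solution" to a usable Harnack inequality as a black box, and it is precisely this transfer that requires the auxiliary $z$ (or an equivalent construction).

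Second, and more serious, is your treatment of the parameter $\tau$. You write that you would "absorb this perturbation by choosing $\tau$ small compared to $r_k$." But the propagation lemma here carries a structural constraint $\omega \leq \varepsilon^2$, where $\omega$ is the coefficient oscillation at the current scale and $\varepsilon\sim |\tau|\sin\delta$ is the sup-convolution radius. This constraint forbids taking $\tau$ arbitrarily small at a fixed scale: if $\tau \to 0$ then $\varepsilon\to 0$ while $\omega$ stays bounded away from zero, and the inequality fails. The paper's resolution is to abandon full monotonicity in favor of $\varepsilon$-monotonicity (propagate only for $|\tau|=\varepsilon_k=\lambda^k\varepsilon_0$, coupled to the rescaling $r^k$ so that $cr^{\alpha k}\leq\lambda^{2k}\varepsilon_0^2$), and then separately recover full monotonicity away from the free boundary via comparison with a frozen-coefficient solution $\zeta$ (Lemmas 9--11 and the bookkeeping of Lemma 12). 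This is the core variable-coefficient difficulty and the heart of the paper; your plan as stated would let you shrink $\tau$, collapse $\varepsilon$, and void the hypothesis of the propagation step. Relatedly, you do not describe the mechanism of the propagation step itself, which rests on the family $\phi_\eta$ of variable-radius sup-convolutions and a connectedness (open-and-closed) argument, not just an abstract comparison with a super/subsolution. Finally, the space-time iteration is not quite a mirror of the spatial one: since $\mu<\delta$ in general, there is a nontrivial geometric lemma needed to show the full space-time ball of radius $|\tau|\sin\kappa\mu$ still sits inside the elliptic monotonicity cone, which you would need to supply before running Section~7.
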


\section{Main Tools and Collected Results}

Define the domain $\Omega_{2r}$ by
\[\Omega_{2r} =\{(x',x_n,t) : |x'|<2L^{-1}r, |t|<4L_0^{-2}r^2, f(x',t)<x_n<4r\}.
\]

Denote by $P_r =(0,r,0)$, $\overline{P_r}=(0,r,2L_0^{-2}r^2)$, $\underline{P_r}=(0,r,-2L_0^{-2}r^2)$. These are the inward point, forward point and backward point, respectively. Denote by $\delta(X,Y)$ the parabolic distance between $X=(x,t)$ and $Y=(y,s)$ and by $\delta_X$ the parabolic distance from $X$ to the origin.\\

Our tools, valid for $\mathcal{L}$-caloric functions on Lipschitz domains vanishing on a piece of the boundary, are as follows (see \cite{FS2}):

\textit{Interior Harnack Inequality}: There exists a positive constant $c=c(n,\lambda,\Lambda)$ such that for any $r\in(0,1)$
\[
u(\underline{P_r}) \leq cu(\overline{P_r}).
\]

\textit{Carleson Estimate}: There exists a $c=c(n,\lambda,\Lambda,L,L_0)$ and $\beta=\beta(n,\lambda,\Lambda,L,L_0)$, $0<\beta\leq 1$ such that for every $X\in \Omega_{r/2}$
\[
u(X) \leq c\left(\frac{\delta_X}{r}\right)^\beta u(\overline{P_r}).
\]

\textit{Boundary Harnack Principle}: There exists $c=c(n,\lambda, \Lambda, L, L_0)$ and $\beta=\beta(n,\lambda, \Lambda, L, L_0)$, $0<\beta \leq 1$, such that for every $(x,t) \in \Omega_{2r}$ and $u$ and $v $ are two solutions
\[
\frac{u(x,t)}{v(x,t)} \geq c \frac{u(\underline{P}_r)}{v(\overline{P}_r)}\textbf{.}
\]

\textit{Backward Harnack Inequality}: Let $m=u(\underline{P_{3/2}})$ and $M=\sup_{\Omega_2}u$. Then there exists a positive constant $c=c(n,\lambda,\Lambda,L,L_0,M/m)$ such that if $r\leq 1/2$
\[
u(\overline{P_r}) \leq cu(\underline{P_r}).
\]

Throughout the work we will use $c$ to denote constants which depend on some or all of $n,\lambda,\Lambda,L,L_0,M/m$. 

Our starting point in the analysis of the free boundary will be the following result proved in \cite{B}. We denote the cone of directions with opening $\theta$ and axis $\eta$ by $\Gamma(\theta,\eta)$.

\begin{thrm}
Let $u$ be a viscosity solution to~\eqref{FBP_Statement} satisfying the hypotheses of this section. Then $u$ is Lipschitz and possesses a space-time cone of directions with axis $e_n$ and opening angle $\theta$ in which the solution is monotone:
\[
u(x-\tau)\leq u(x) \quad \forall \tau \in \Gamma(\theta,e_n)
\]

\end{thrm}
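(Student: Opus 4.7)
The result separates cleanly into two independent claims: the Lipschitz bound on $u$, and the existence of the monotonicity cone. I would prove them in that order, each by combining the graph structure of the free boundary with the Harnack-type tools of Section~3 and the structural hypotheses on $G$.

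For the Lipschitz estimate, I would first establish that $u^+$ grows linearly off $\fb$. The Carleson estimate controls $u^+$ in $\Omega_{r/2}$ by $u^+(\overline{P_r})$, while the non-degeneracy hypothesis, together with the interior Harnack inequality, forces $u^+(\overline{P_r}) \geq c m r$. A matching upper bound comes from a boundary Harnack comparison against a barrier built from the parabolic distance to $\fb$. Linear growth of $u^-$ off $\fb$ is then deduced from the free boundary condition: the strict monotonicity of $G$ in both variables bounds $u^-_\nu$ in terms of $u^+_\nu$ at regular points, and this transfers via a barrier argument in $\Omega^-$ to linear growth of $u^-$. Interior $\mathcal L$-caloric regularity then promotes this linear growth to a Lipschitz bound.

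For the monotonicity cone, I would first obtain monotonicity along $e_n$ from the graph form of $\fb$: for any $s>0$, the translate $u_s(X):=u(X+se_n)$ has $\Omega^\pm(u_s)\supset\Omega^\pm(u)$, and comparison with $u$ inside each phase, using the strict monotonicity of $G$ on $\fb$, yields $u_s\geq u$. For a purely spatial direction $\tau$ in the geometric cone $\Gamma(\arctan(1/L),e_n)$, the spatial Lipschitz constant of $f$ gives the same inclusion of positive phases, and the same comparison produces $u(X-\tau)\leq u(X)$. To build in a time component, the Lipschitz-in-$t$ property of $f$ shifts the positive phases under $(\tau,\sigma)$ with $\sigma$ of order $L_0|\tau|^2$, and the backward Harnack inequality exchanges $u(\underline{P_r})$ for $u(\overline{P_r})$ up to a constant, allowing the spatial comparison argument to run at the parabolic scale and producing a genuine space-time cone $\Gamma(\theta,e_n)$.

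The principal obstacle, and the reason the constant-coefficient argument of \cite{F} does not transcribe verbatim, is that $u_\tau$ is \emph{not} a solution of $\mathcal{L}v - v_t = 0$: it satisfies $\sum a_{ij}(X+\tau) D_{ij} u_\tau - \partial_t u_\tau = 0$, and the difference with $\mathcal L$ is of order $|\tau|^\alpha$ by H\"older continuity of the coefficients. Every application of the comparison principle therefore carries a perturbation error that must be absorbed into the strict monotonicity constant $c^*$ from the free boundary condition and the linear non-degeneracy of $u$. Making this accounting work uniformly in $\tau$, without collapsing the opening $\theta$, is the technical heart of the argument and forces one to localize to scales small enough that the H\"older oscillation of $a_{ij}$ remains controlled.
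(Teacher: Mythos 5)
The paper does not prove this theorem: it is quoted verbatim from the author's earlier work \cite{B} (``Our starting point in the analysis of the free boundary will be the following result proved in [B]''), so there is no in-paper proof to compare your attempt against. What follows is therefore an assessment of your sketch on its own merits.

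For the Lipschitz estimate, the first step as you describe it does not close. The Carleson estimate gives H\"older growth, $u(X) \leq c(\delta_X/r)^\beta u(\overline{P_r})$ with $\beta$ possibly strictly less than $1$, not linear growth. To upgrade the upper bound to linear one must use the free boundary condition itself: construct a classical strict supersolution whose positive-phase normal derivative is chosen (via the strict monotonicity of $G$) to dominate, slide it down to touch $u$, and invoke the viscosity definition to produce a contradiction if $u^+$ grows superlinearly. Your phrase ``a barrier built from the parabolic distance to $\fb$'' cannot be compared to $u$ via the boundary Harnack principle, since the barrier is not an $\mathcal{L}$-caloric function; it must be run through the viscosity comparison with the free boundary condition. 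A caloric function vanishing on a Lipschitz graph, with no further structure, is generically only H\"older and not Lipschitz, so the free boundary condition genuinely does the work here.

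For the monotonicity cone, the geometric observation $\Omega^\pm(u(\cdot+se_n)) \supset \Omega^\pm(u)$ is correct, but ``comparison with $u$ inside each phase'' does not directly yield $u(\cdot+se_n)\geq u$: on the outer parabolic boundary of any cylinder you have no a priori ordering between $u$ and its translate. What is needed is a sliding or continuity-of-domains argument (of exactly the kind that reappears, at a later stage, in the openness step of the propagation Lemma~\ref{lm:propagation}): show that the set of translation parameters for which the strict ordering of free boundaries holds is open and closed, using the asymptotic developments at touching points and the strict monotonicity of $G$ together with Hopf's lemma to rule out contact. Finally, on the variable-coefficient difficulty you correctly identify the issue but understate the delicacy: the perturbation error for a translate is of order $|\tau|^\alpha |D^2 u| \sim |\tau|^\alpha / d$, which is large near the free boundary, so absorbing it ``into the strict monotonicity constant $c^*$'' is not automatic. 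The technique the present paper uses for its own lemmas (replace $u$ by the constant-coefficient Dirichlet companion $z$ via the Schauder machinery in Section~4) is the robust way to control exactly this error, and a proof of the cited theorem in the variable-coefficient setting would likely need a version of it rather than the direct perturbation accounting you propose.
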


\subsection{Initial Configurations and Domains}

In what follows it will be necessary to know that the coefficients $a_{ij}(x,t)$ in the operator $\mathcal{L}$ are suitably close to $\delta_{ij}$. To this end we define $\asj(x,t) =a_{ij}(sx,s^2t)$ and set
\[
\mathcal{L}^s-\partial_t = \sum_{i,j} \asj(x,t) D_{ij} -\partial_t
\] to be the parabolic operator with these dilated coefficients. Set
\[
u_s(x,t) = \frac{u(sx,s^2t)}{s}.
\]
Then we have the equivalence
\[
\mathcal{L}u-u_t=0 \Leftrightarrow  \mathcal{L}^su_s-(u_s)t=0.
\] Note that this parabolic rescaling of $u$ does not alter the free boundary condition in~\eqref{FBP_Statement}. We will assume $a_{ij}(0,0) =\delta_{ij}$ and set $A=\sup_{i,j} [a_{ij}]_\alpha$.

Let $x_0=\frac{3}{4}e_n$, $P_0=(x_0,0)$, $\overline{P_0}=(x_0,\frac{9}{8L_0^2})$, $\underline{P_0} =(x_0,-\frac{9}{8L_0^2})$. These are inward, forward and backward reference points, respectively.

Next define regions $T= B'_{1/4}(x_0)\times (-\frac{9}{16L_0^2},\frac{9}{16L_0^2})$ and set
\[
\Phi = B'_{1/4-\sigma}(x_0)\times (-\frac{9}{16L_0^2}+\sigma^2,\frac{9}{16L_0^2}-\sigma^2)
\]
$\sigma$ to be specified later. By construction the parabolic distance from $\partial \Phi$ to $\partial T$ is $\sigma$.

Finally, let
 \[
 \Psi =B'_{1/8}(x_0)\times (-\frac{9}{32L_0^2},\frac{9}{32L_0^2}).
 \]
  In what follows we will have $\Psi \Subset \Phi \Subset T$ by our choice of $\sigma$. Additionally, by an initial change of variables $u(rx,rt)$, with $r<1$, we can reduce the Lipschitz constants $L$ and $L_0$ to be less than one, so that the above regions and test points are contained within $\Omega_4$. Finally, by a rescaling we have the free boundary of $u$ contained in $\{|x_n|<1/10\}$. \\

Next define $z$ by
\begin{align*}
\Delta z -z_t &=0 \quad \text{in } T\\
z &=u \quad \text{on } \partial_p T.
\end{align*}
Note that
\begin{align*}
\mathcal{L}^s(u-z)-(u-z)_t &= \sum (\asj-\delta_{ij})D_{ij}z \\
\Delta(u-z)-(u-z)_t &= \sum (\asj-\delta_{ij})D_{ij}u.
\end{align*}

We may assume by this configuration that the conclusion of Lemma 2.1 in [FS1] holds throughout $\Omega_4$. This states that
\[
c_1 \frac{u(X)}{d_{X}} \leq D_nu(X) \leq c_2 \frac{u(X)}{d_X}
\]
where here $d_X$ denotes the distance from $X=(x,t)$ to the FB at time level $t$.

\section{Interior Enlargement of the Monotonicity Cone}
 The results in this section only require the following: $u$ is $\mathcal{L}^s$-caloric, where $\mathcal{L}^s$ is suitably close to $\Delta$ (as controlled by the $\asj$),  $u$ vanishes on the piece of the boundary $\{f(x',t)=x_n\}$, and $u$ is Lipschitz with a monotonicity cone $\Gamma(e_n,\theta)$. In particular, the free boundary condition $G(u_\nu^+, u_\nu^-)=1$ plays no role in these results. Our method of proof is similar to \cite{CFS} in the elliptic case.

\begin{lm}
Let $u$ be a solution to $\mathcal{L}^s-u_t=0$ in $\Omega_4$,  $z$ as above. Then
\begin{equation}
|u-z|^*_{2+\alpha,\Phi} \leq Ks^\beta u(\pob)
\end{equation}
where $K=K(A)$ is a constant which depends on $A$ as well as the usual quantities and $\beta =\frac{\alpha^2}{\alpha+2}$.
\end{lm}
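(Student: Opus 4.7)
My plan is to set $w := u-z$ and exploit the fact that it solves a Dirichlet problem for the pure heat operator whose source measures the smallness of the coefficient perturbation, then to apply weighted interior Schauder theory on $\Phi \Subset T$.

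First I will derive the equation and the size estimates. Subtracting $\mathcal{L}^s u - u_t = 0$ from $\Delta z - z_t = 0$ gives
\[
\Delta w - w_t \;=\; \sum_{i,j}(\delta_{ij}-\asj)\,D_{ij}u \;=:\; f \quad \text{in } T,\qquad w=0 \text{ on }\partial_p T.
\]
The H\"older bound on $a_{ij}$, the normalization $a_{ij}(0,0)=\delta_{ij}$, and the scaling $\asj(X)=a_{ij}(sx,s^2t)$ yield $|\asj(X)-\delta_{ij}|\leq C A s^\alpha$ and $[\asj]_{C^\alpha(T)}\leq C A s^\alpha$ on the bounded region $T$. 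Since $T\subset\{x_n\geq 1/2\}$ while the free boundary lies in $\{|x_n|<1/10\}$, $u$ is a classical $\mathcal{L}^s$-caloric function on a neighborhood of $T$, and weighted interior Schauder estimates give $d_X^2|D^2u(X)|+d_{XY}^{2+\alpha}[D^2u]_{C^\alpha}\leq C\,\|u\|_{L^\infty(T)}$. The Carleson estimate combined with the backward Harnack inequality gives $\|u\|_{L^\infty(T)}\leq C u(\pob)$, and multiplying by the coefficient perturbation therefore yields the weighted H\"older control
\[
|f|^{(2)}_{\alpha,T}\;\leq\;C s^\alpha\, u(\pob).
\]

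The weighted interior Schauder estimate for $\Delta w-w_t=f$ with $w=0$ on $\partial_p T$ then reads
\[
|w|^*_{2+\alpha,\Phi}\;\leq\;C\bigl(\|w\|_{L^\infty(T)}+|f|^{(2)}_{\alpha,T}\bigr),
\]
so the task reduces to bounding $\|w\|_{L^\infty(T)}$ by a suitable power of $s$. A direct maximum principle produces no smallness because the pointwise bound $|f(X)|\lesssim s^\alpha u(\pob)/d_X^2$ sits exactly at the threshold of integrability against the parabolic Dirichlet Green's function. I would instead split $T$ by a parameter $\rho$ into a core $\{d_X>\rho\}$, on which the source is uniformly bounded by $C s^\alpha u(\pob)/\rho^2$ and the max principle contributes at that order, and a boundary layer $\{d_X\leq\rho\}$, on which the vanishing of $w$ on $\partial_p T$ combined with an appropriate $C^\alpha$ boundary estimate gives $|w|\leq C\rho^\alpha u(\pob)$. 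Optimizing by choosing $\rho\sim s^{\alpha/(\alpha+2)}$ balances the two contributions and yields $\|w\|_{L^\infty(T)}\leq C s^{\alpha^2/(\alpha+2)}u(\pob)$; substituting back into the Schauder estimate produces the lemma with $\beta=\alpha^2/(\alpha+2)$.

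The main obstacle is the boundary-layer estimate $|w|\leq C\rho^\alpha u(\pob)$ uniformly in $s$: it is precisely the H\"older exponent $\alpha$ in this step which, balanced against the singular-core contribution of order $s^\alpha/\rho^2$, produces the final exponent $\alpha^2/(\alpha+2)$. Making this estimate rigorous — pinning the boundary regularity of $w$ to the coefficient H\"older exponent $\alpha$ rather than the naive Lipschitz regularity of $u$ and $z$ individually — is the technical heart of the argument, and can be done either by boundary Schauder estimates applied directly to $w$ or by separately controlling $u$ and $z$ in the appropriate weighted $C^{\alpha}$ spaces up to $\partial_p T$ and subtracting.
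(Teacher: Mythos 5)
Your proposal matches the paper's argument: the paper likewise applies the weighted Schauder estimate to $w=u-z$, bounds the H\"older seminorm of the source by $Cs^{\alpha}u(\pob)$ via the interior estimate $|u|^*_{2+\alpha}\leq C|u|_{0}$, and controls $\|w\|_{L^\infty}$ by a maximum-principle/boundary-layer split on a shrunken domain $\Phi$ at distance $\sigma$ from $\partial_p T$, with $\sigma=s^{\alpha/(\alpha+2)}$ playing exactly the role of your $\rho$. One small remark on your closing paragraph: the naive Lipschitz bound on $u$ and $z$ (with Lipschitz constants $\lesssim u(\pob)$, since $T$ is far from the free boundary) already gives $|w|\lesssim \rho\,u(\pob)\leq \rho^{\alpha}u(\pob)$ in the boundary layer, so there is no obstruction there --- the paper simply states the conservative $\sigma^{\alpha}$ bound, and it is sufficient for the claimed (non-sharp) exponent $\beta=\alpha^2/(\alpha+2)$.
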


\begin{proof}

 We apply the Schauder estimates to the difference $u-z$ to obtain
\begin{equation}\label{inq:SchEst}
|u-z|^*_{2+\alpha,\Phi} \leq C(|u-z|_{0,\Phi}+|\sum (a_{ij}-\delta_{ij})D_{ij}u|^{(2)}_{0,\alpha, \Phi})
\end{equation}
using the standard (see [L]) notation for these norms and weighted norms. Recall that $|f|^{(2)}_\alpha = |f|^{(2)}_0+[f]^{(2)}_\alpha$\textbf{.} We begin by estimating the H\"{o}lder norm term as follows:
\begin{align*}
&|(\asj-\delta_{ij})D_{ij} u|^{(2)}_{0,\Phi} +[(\asj-\delta_{ij})D_{ij}u]^{(2)}_{\alpha,\Phi} \\
&\leq As^\alpha|D_{ij}u|^{(2)}_{0,\Phi} + |(\asj-\delta_{ij})|_{0,\Phi}[D_{ij}u]^{(2)}_{\alpha,\Phi}+[(\asj-\delta_{ij})]_{\alpha,\Phi}|D_{ij}u|^{(2)}_{0,\Phi}\\
&\leq As^\alpha|D_{ij}u|^{(2)}_{0,\Phi} + As^\alpha[D_{ij}u]^{(2)}_{\alpha,\Phi} + As^\alpha |D_{ij}u|^{(2)}_{0,\Phi}\\
&\leq A s^\alpha(|u|^*_{2+\alpha,\Phi}) \leq CAs^\alpha|u|_{0,\Phi} \leq CAs^\alpha u(\pof)\\
&\leq CAs^\alpha u(\pob).
\end{align*}

The Backward Harnack Inequality was used to obtain the last inequality.
Now we estimate the sup norm term in~\eqref{inq:SchEst}. Using the $\textit{a priori}$ estimates we have
\[
|u-z|_{0,\Phi} \leq |u-z|_{0,\partial_p\Phi}+ C'\sup_\Phi |(\asj-\delta_{ij})D_{ij}u|.
\]

The first term is estimated as follows: Recall that $u=z$ on the boundary of T so their difference is zero. Now $\partial_p \Psi$ lies a distance $\sigma$ from $\partial T$, so using the H\"{o}lder continuity up to the boundary we have that $|u-z|_{0,\partial_p \Psi} \leq c\sigma^\alpha|u-z|_{0,T}$.

Using this we have
\begin{align*}
|u-z|_{0,\Phi} &\leq c\sigma^\alpha|u-z|_{0,T} +C's^\alpha A \sigma^{-2} |u|_{0,\Phi}\\
&\leq c\sigma^\alpha|u|_{0,T} +C's^\alpha A \sigma^{-2} |u|_{0,\Phi}.
\end{align*}

Select $\sigma=s^{\frac{\alpha}{\alpha+2}}$ and obtain
\[
|u-z|_{0,\Phi} \leq s^{\frac{\alpha^2}{\alpha+2}}|u|_{0,T}(c+C'A) \leq c's^{\frac{\alpha^2}{\alpha+2}}u(\pof)(c+C'A).
\]
Now we always have $\alpha>\frac{\alpha^2}{\alpha+2}$ for $\alpha >0$ so for $s<1$ we have $s^\alpha< s^{\frac{\alpha^2}{\alpha+2}}$. Combining this with the estimate for the H\"{o}lder norm above we obtain
\[
|u-z|^*_{2+\alpha,\Phi} \leq \left[ CA +c'(c+C'A) \right]s^{\frac{\alpha^2}{\alpha+2}} u(\pof).
\]
This is the conclusion of the lemma with $K =(CA+c'(c+C'A))$.
\end{proof}

At this point it becomes convienent to begin treating the spacial portion of the cone and space-time cone seperately. We denote these by $\Gamma_x(e_n,\theta_x)$ and $\Gamma_t(\eta,\theta_t)$ respectively, $\eta$ a vector in the $e_n-e_t$ plane. We now focus on expanding these cones of directions.

\begin{lm}\label{lm:2}
Let $u$ be a solution to $\mathcal{L}^s-u_t=0$ in $\Omega_4$ with a cone of monotonicity $\Gamma(e_n,\theta)$. Let $\nabla =\frac{1}{|\nabla u(\pob)|}\nabla u(\pob)$. Then if $s$ is sufficiently small, for any $\tau \in \Gamma_x(e_n,\theta)$, $|\tau| =1$,
\begin{equation}\label{inq:HarnackLike}
D_\tau u(X) \geq (C\langle \nabla,\tau \rangle -cs^\beta)u(\pob)
\end{equation} for all $X\in \Psi$.
The same statement holds for $\tau \in \Gamma_t(\eta,\theta_t)$ with $\nabla$ the unit vector in direction $(u_{x_n}(\pob),u_t(\pob))$ in the $e_n-e_t$ plane.
\end{lm}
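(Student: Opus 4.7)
The plan follows the elliptic strategy of \cite{CFS}: exploit that $z$ solves the (constant coefficient) heat equation, so $D_\tau z$ is again $\Delta$-caloric and therefore accessible to the parabolic Harnack inequality, while Lemma~1 guarantees that $D_\tau z$ is close to $D_\tau u$ on compact subsets of $\Phi$.

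First I would fix an intermediate domain $\Phi'$ with $\Psi \Subset \Phi' \Subset \Phi$ and $\mathrm{dist}(\Phi',\partial\Phi) \geq c_0>0$ independent of $s$. The definition of the weighted Schauder norm together with Lemma~1 then yields
\[
|D_\tau(u-z)(X)| \leq C s^\beta u(\pob), \qquad X \in \Phi'.
\]
Because $\tau \in \Gamma_x(e_n,\theta)$ lies in the monotonicity cone of $u$, we have $D_\tau u \geq 0$ on $\Omega_4 \supset \Phi'$, so the auxiliary function
\[
w(X) := D_\tau z(X) + C s^\beta u(\pob)
\]
is nonnegative on $\Phi'$. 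Since $\Delta$ has constant coefficients, $D_\tau z$ and therefore $w$ is $\Delta$-caloric on $\Phi'$. For the time-cone version, $\tau \in \Gamma_t(\eta,\theta_t)$ lies in the $e_n$--$e_t$ plane, and the same setup applies because both $D_{x_n} z$ and $D_t z$ are caloric, hence so is every linear combination of them.

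Next I would apply the interior parabolic Harnack inequality to $w$ on the cylinder $\Phi'$: for every $X \in \Psi$ and a reference point $P^* \in \Phi'$ chosen at a time strictly below the time slab of $\Psi$, one obtains $w(X) \geq c_H w(P^*)$. Since $w(P^*) \geq D_\tau u(P^*)$, the task reduces to showing $D_\tau u(P^*) \geq c'\,u(\pob)\langle\nabla,\tau\rangle$. For this I would combine three ingredients: the pointwise estimate $c_1 u/d_X \leq D_n u \leq c_2 u/d_X$, which together with the Lipschitz bound on $u$ forces $|\nabla u(\pob)| \sim u(\pob)$ (using $d_{\pob} \sim 1$); the interior and backward Harnack inequalities on $u$, which give $u(P^*) \sim u(\pob)$; and the monotonicity cone, which constrains $\nabla u$ at every interior point to lie in the polar cone $\Gamma(e_n,\pi/2-\theta)$ around $e_n$. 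Unwinding via $D_\tau u = D_\tau z - D_\tau(z-u)$ and using Lemma~1 once more then yields
\[
D_\tau u(X) \geq w(X) - 2 C s^\beta u(\pob) \geq \bigl(C'\langle\nabla,\tau\rangle - c s^\beta\bigr)u(\pob),
\]
which is the claim.

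The main obstacle is the lower bound on $w(P^*)$. The natural reference point, $\pob$, lies outside $T$ (its $t$-coordinate is twice the half-height of $T$), so $z$ is not defined there and the Harnack chain for $w$ cannot reach it directly. Working at a reference point $P^*$ inside $T$ instead requires carefully tracking how much the direction of $\nabla u$ can drift between $\pob$ and $P^*$; here the monotonicity cone, the estimate $D_n u \sim u/d$, and the Lipschitz bound together must be exploited to force $\langle\nabla u(P^*),\tau\rangle$ to remain comparable to $|\nabla u(\pob)|\langle\nabla,\tau\rangle = u(\pob)\langle\nabla,\tau\rangle$ up to the $O(s^\beta)$ slack afforded by Lemma~1.
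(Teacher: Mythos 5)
Your proposal follows the same route as the paper: set $w=D_\tau z + Cs^\beta u(\pob)$, argue it is nonnegative and $\Delta$-caloric, apply the interior Harnack inequality, and transfer back to $D_\tau u$ via the Schauder estimate. You are in fact more careful than the written proof on two geometric points: you shrink to an intermediate domain $\Phi'\Subset\Phi$ before using the pointwise consequence of the weighted Schauder norm (which degenerates at $\partial\Phi$), and you correctly observe that $\pob=(x_0,-\tfrac{9}{8L_0^2})$ has $|t|$ twice the half-height of $T$, so $\pob\notin T$ and neither $z(\pob)$ nor $\nabla z(\pob)$ is defined. The paper's argument, as written, uses $D_\tau z(\pob)$ and $\nabla z(\pob)$ as if $\pob$ were interior to $T$, so you have put your finger on a genuine imprecision.

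However, the repair you sketch is not closed. You reduce matters to showing $D_\tau u(P^*)\geq c'\,u(\pob)\langle\nabla,\tau\rangle$ for an interior reference point $P^*$, and you acknowledge this requires controlling the drift of the \emph{direction} of $\nabla u$ between $\pob$ and $P^*$. The tools you cite do not do that: the monotonicity cone only confines both normals to a fixed cone about $e_n$; $D_nu\sim u/d$ controls the \emph{magnitude} of the gradient, not its orientation; and the Lipschitz bound gives no $O(s^\beta)$ control at the fixed parabolic distance separating $\pob$ from points in $\Phi$. If $\tau$ lies near the boundary of the cone, $\langle \nabla u(P^*),\tau\rangle$ could be much smaller than $\langle \nabla u(\pob),\tau\rangle$, and nothing you invoke rules this out. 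The paper's own scheme avoids the drift issue entirely by comparing $\nabla z$ to $\nabla u$ at the \emph{same} point via Schauder and then estimating $|\nabla'-\nabla|$ algebraically; the natural fix is to carry out exactly that computation with the reference point replaced by an interior point $\underline{Q}$ near the parabolic bottom of $\Phi$ (and, correspondingly, to take $\nabla$ to be the unit gradient direction of $u$ at $\underline{Q}$ rather than at $\pob$ --- the only property of $\nabla$ used later, in Lemma 3, is that $\langle\nabla,\tau\rangle\geq c_3\delta$ for $\tau$ in the trimmed cone $\Gamma'$, which is insensitive to this change). As it stands, your key lower bound on $w(P^*)$ is asserted but not proved.
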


\begin{proof}
The proof follows the same lines in both the spacial and space-time cases. We begin with the spacial case.

Let $z$ be as in the previous lemma so that
\[
|u-z|_{2+\alpha,\Phi}^* \leq cs^\beta u(\pob)
\] where $\beta =\frac{\alpha^2}{\alpha+2}$.\\
Then $D_\tau z +cs^\beta u(\pob)$ is a non-negative solution to the heat equation in $\Phi$. By the Harnack Inequality, for $(x,t)$ in $\Psi$ we have
\begin{equation}
D_\tau z(x,t) +cs^\beta u(\pob) \geq c' \left( D_\tau z(\pob) +cs^\beta u(\pob)\right).
\end{equation}
Hence, letting $\nabla' =\frac{1}{|\nabla z(\pob)|}\nabla z(\pob)$, and assuming without loss of generality that $c'<1$, we obtain
\begin{align}
D_\tau z &\geq c'D_\tau z(\pob) -cs^\beta u(\pob)\\
&= c'|\nabla z(\pob)| \langle \nabla ',\tau \rangle -cs^\beta u(\pob).
\end{align}

Now using the Schauder estimate,  $\frac{u}{d}\sim |\nabla u|$ and the Harnack inequality we have
\[
|\nabla z| \geq |\nabla u| -cs^\beta u(\pob) \geq (C-cs^\beta)u(\pob).
\]
So if $s$ is small enough then $|\nabla z(\pob)| \geq cu(\pob)$ (4.6) becomes
\begin{equation} \label{eq:11}
D_\tau z \geq (c^*\langle \nabla',\tau \rangle -cs^\beta)u(\pob).
\end{equation}

Using the Schauder estimate once more we have
\[
|\nabla' - \nabla| \leq \frac{|\nabla u(\pob)-\nabla z(\pob)|}{|\nabla u(\pob)|}+ \frac{|(|\nabla z(\pob)|-|\nabla u(\pob)|)|}{|\nabla u(\pob)|} \leq cs^\beta.
\]
This is proved as follows: After adding and subtracting the quantity $|\nabla z|\nabla z$, the left-hand side becomes (suppressing the dependence on the point)
\[
\frac{|\nabla z |\nabla u|-\nabla u|\nabla z||}{|\nabla z||\nabla u|} = \frac{||\nabla z|(\nabla z -\nabla u)+\nabla z(|\nabla u|-|\nabla z|)|}{|\nabla z||\nabla u|}\mathbf{.}
\]
Applying the triangle inequality and canceling terms we obtain the first inequality. The second one is then a consequence of the Schauder estimate (with a different constant $c$). \\

We have thus established $\langle \nabla',\tau \rangle \geq \langle \nabla,\tau \rangle -cs^\beta$. Replacing this in~\eqref{eq:11} we have
\[
D_\tau z \geq (c_1\langle \nabla,\tau \rangle -cs^\beta)u(\pob).
\]
Finally, using the Schauder estimate one last time we obtain (with different constants than in the previous line)
\[
D_\tau u \geq (C\langle \nabla,\tau \rangle -cs^\beta)u(\pob).
\]

In the space-time case the same calculation works with $\nabla$ the unit vector in $e_n-e_t$ plane in direction $(u_{x_n}(\pob),u_t(\pob))$ and $\nabla'$ the vector in direction $(z_{x_n}(\pob),z_t(\pob))$ .
\end{proof}
\textbf{Remark:} In the case of the heat equation the inequality~\eqref{inq:HarnackLike}, without the `error' term $cs^\beta u(\pob)$, can be obtained easily by simply applying the Harnack Inequalities to the solution. In the variable coefficient case~\eqref{inq:HarnackLike} acts as a substitute.\\

 At this point we need to make sure that $D_\tau u$ remains positive, which cannot be guaranteed because of the error term in~\eqref{inq:HarnackLike}. To deal with this we eliminate the portion of the original cone consisting of the vectors which make an angle of more than $\frac{99\pi}{200}$ with $\nabla$. We denote this modified set of directions with $\Gamma'_x(e_n,\theta_x)$ (or $\Gamma_t'(\eta,\theta_t)$ as the case may be). Then for some $c_3$ and any $\tau \in \Gamma'_x(e_n,\theta_x)$
\[
\langle \nabla,\tau \rangle \geq c_3\delta\mathbf{,}
\] where $\delta =\frac{\pi}{2}-\theta_x$ is the defect angle of the cone, $c_3$ depends on how much of the cone was deleted. In the space-time case we use $\mu = \frac{\pi}{2}-\theta_t$ to denote the defect angle; initially this is the same as $\delta$ but this will not hold in the iteration later in the paper.

 As it is by now standard, this monotonicity can be described in terms of the sup-convolution, in our case over `thin' balls either purely spacial or in the space-time plane. Precisely,
\[
v_\varepsilon(X) = \sup_{B'_\varepsilon(X)} u(Y-\tau) \leq u(X)
\] for any $\tau \in \Gamma'(e_n,\frac{\theta}{2})$ sufficiently small, with $\varepsilon =|\tau|\sin\frac{\theta}{2}$. The $B'$ denotes a thin ball either purely in space or in space-time, depending on whether $\tau$ is in $\Gamma_x'$ or $\Gamma_t'$.\\

In what follows, the direction $\tau$ is either in $\Gamma'_x$ or $\Gamma_t'$; the proofs are the same. We distinguish between them only later work will make a distinction and it is convenient to have interior enlargement respect this distinction.
\begin{lm}\label{lm:3}
Let $u$ be as in Lemma~\ref{lm:2}. Then there exists $s_0>0$ such that if $s\leq s_0$ we have
\begin{equation}\label{inq:gap}
u(\pob) -v_\varepsilon(\pob) \geq \sigma\varepsilon u(\pob).
\end{equation}

\end{lm}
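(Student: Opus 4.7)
The plan is to reduce the sup-convolution gap $u(\pob) - v_\varepsilon(\pob)$ to an ordinary directional difference of $u$ along a perturbation of $\tau$, and then integrate the Harnack-type lower bound on directional derivatives provided by Lemma~\ref{lm:2}.

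First, let $Y^* \in \overline{B'_\varepsilon(\pob)}$ be a point at which the supremum defining $v_\varepsilon(\pob)$ is attained (it exists by continuity and compactness), and set $\tilde\tau := \tau + (\pob - Y^*)$, so that
\[
u(\pob) - v_\varepsilon(\pob) = u(\pob) - u(\pob - \tilde\tau).
\]
Since $|\tilde\tau - \tau| \leq \varepsilon = |\tau|\sin(\theta/2)$, the magnitude $|\tilde\tau|$ satisfies $(1-\sin(\theta/2))|\tau| \leq |\tilde\tau| \leq (1+\sin(\theta/2))|\tau|$, and the unit direction $\tilde\tau/|\tilde\tau|$ still lies in the original (unmodified) cone $\Gamma(e_n,\theta)$. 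Moreover, since $\tau/|\tau|$ was chosen within angle $99\pi/200$ of $\nabla$ and the sup-convolution cone has opening only $\theta/2$, the angular perturbation between $\tau/|\tau|$ and $\tilde\tau/|\tilde\tau|$ is controlled by the geometric slack built into the construction; this is enough to conclude $\langle \nabla, \tilde\tau/|\tilde\tau|\rangle \geq c_0$ for a universal constant $c_0>0$.

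Next, for $|\tau|$ sufficiently small, the whole segment $\{\pob - r\tilde\tau : r\in[0,1]\}$ stays inside $\Psi$, so by the fundamental theorem of calculus and Lemma~\ref{lm:2} applied to the unit direction $\tilde\tau/|\tilde\tau|$,
\[
u(\pob) - u(\pob - \tilde\tau) = |\tilde\tau| \int_0^1 D_{\tilde\tau/|\tilde\tau|} u(\pob - r\tilde\tau)\,dr \geq |\tilde\tau|\bigl( C\langle\nabla,\tilde\tau/|\tilde\tau|\rangle - c s^\beta \bigr) u(\pob).
\]
Now choose $s_0$ small enough that $cs^\beta \leq C c_0/2$ for all $s\leq s_0$; the right hand side is then at least $\tfrac{1}{2}Cc_0|\tilde\tau|u(\pob)$, and the two-sided bound on $|\tilde\tau|$ combined with $\varepsilon = |\tau|\sin(\theta/2)$ gives a lower bound of the form $\sigma\varepsilon u(\pob)$, where $\sigma$ depends only on $C$, $c_0$, and $\theta$.

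The principal obstacle is the angular bookkeeping: one has to verify that the perturbed unit vector $\tilde\tau/|\tilde\tau|$ remains uniformly far from the hyperplane orthogonal to $\nabla$, so that the $cs^\beta$ error in Lemma~\ref{lm:2} can be absorbed. This is precisely why the modified cone $\Gamma'$ was cut at the critical angle $99\pi/200$ and why the sup-convolution is taken over the smaller half-opening $\theta/2$; once these parameters are correctly calibrated against the defect $\delta=\tfrac{\pi}{2}-\theta$, the integration of the Harnack-type estimate produces the desired quantitative gap.
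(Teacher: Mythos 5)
Your decomposition — taking the maximizer $Y^*$, setting $\tilde\tau = \tau + (\pob - Y^*)$, and reducing the sup-convolution gap to $u(\pob) - u(\pob-\tilde\tau)$, then integrating the Harnack-type estimate from Lemma~\ref{lm:2} along the segment — is essentially the same as the paper's proof, which does the identical thing via the Mean Value Theorem for every $Y \in B_\varepsilon(\pob)$ rather than just the maximizer.

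However, there is a genuine error in the angular bookkeeping: you claim $\langle\nabla, \tilde\tau/|\tilde\tau|\rangle \geq c_0$ for a universal constant $c_0 > 0$. This is not true. The perturbed direction $\tilde\tau$ lands anywhere in the full cone $\Gamma'(e_n,\theta)$, and a direction on the edge of that cone makes angle $\theta = \tfrac{\pi}{2} - \delta$ with $e_n$; since $\nabla$ is within $cs^\beta$ of $e_n$, the inner product can be as small as roughly $\sin\delta \approx \delta$. The deletion of directions beyond $99\pi/200$ from $\nabla$ is only a safeguard against pathological sign changes — it does not upgrade the lower bound to a constant. The correct statement, which the paper isolates explicitly, is $\langle\nabla,\tilde\tau/|\tilde\tau|\rangle \geq c_3\delta$ with $\delta$ the defect angle.

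This matters downstream in two places. First, the threshold $s_0$ is \emph{not} universal: absorbing the $cs^\beta$ error against $C c_3 \delta$ forces $s_0 = (c_1 c_3\delta/(2c_2))^{1/\beta}$, which depends on $\delta$. In the iteration of Section 6 the defect angle decays geometrically, and the rescaling parameter $r$ at each step is coupled to the current defect angle through this relation — a universal $s_0$ would break that coupling. Second, your conclusion that $\sigma$ ``depends only on $C$, $c_0$, and $\theta$'' misses the precise scaling $\sigma = c\delta$, linear in the defect angle, which is exactly what Lemma~\ref{lm:4} and the subsequent cone-enlargement machinery require in order to produce geometric decay of the defect angle. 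You need to track the $\delta$-dependence explicitly rather than bury it in an allegedly universal constant.
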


\begin{proof}
 If $Y\in B_\varepsilon(\pob)$ then, invoking the Mean Value Theorem with $\bar{\tau}=\tau+(\pob-Y)$, we obtain
\begin{equation}
u(Y-\tau) =u(\pob -\bar{\tau}) =u(\pob) -|\bar{\tau}|D_{\bar{\tau}}u(X^*).
\end{equation}
We estimate $D_{\bar{\tau}}u$ from below. If $\tau \in \Gamma(e_n,\frac{\theta}{2})$ then $\bar{\tau}\in \Gamma'(e_n,\theta)$, so using the observation immediately preceding this lemma we have
\begin{align*}
D_{\bar{\tau}} u &\geq (c_1\langle \nabla,\bar{\tau} \rangle -c_2s^\beta)u(\pob)\\
& \geq c\delta u(\pob)
\end{align*}
for any $s\leq s_0 =\left(\frac{c_1c_3\delta}{2c_2}\right)^{1/\beta}$\textbf{.}\\

 This, together with the fact $\bar{\tau}\geq c\varepsilon$, implies that $|\bar{\tau}|D_{\bar{\tau}}u(X^*) \geq c\varepsilon\delta u(\pob)$. Using this in (4.6) we get
\[
u(Y-\tau) \leq (1-c\varepsilon\delta) u(\pob)\textbf{.}
\]
Since $y$ is any point in $B_\varepsilon(\pob)$, we obtain the
the desired `gap' with $\sigma =c\delta$, which is the desired conclusion.
\end{proof}

 We now propagate the `gap' at the point $\pob$ in the above inequality to a smaller gap in a whole neighborhood.

\begin{lm}\label{lm:4}
Let $u$ be as Lemma~\ref{lm:3}, monotone increasing in every direction in $\Gamma'(e_n,\theta)$. Suppose for $\varepsilon>0$, $\sigma>0$ small we have
\begin{equation}\label{eq:lm4}
u(\pob) -v_\varepsilon(\pob) \geq \sigma\varepsilon u(\pob)\mathbf{.}
\end{equation}
Then there exists positive constants $C$ and $h$ such that in $\Psi$ we have
\[
u(X) -v_{(1+h\sigma)\varepsilon}(X) \geq C\sigma \varepsilon u(\pob)\mathbf{.}
\]
\end{lm}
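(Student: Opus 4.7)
The plan is to propagate the pointwise gap at $\pob$ across $\Psi$ by applying parabolic Harnack to each individual translate $u(X)-u(X-\tau_0)$ (and then taking an infimum), and finally to convert the resulting $\varepsilon$-bound into one at radius $(1+h\sigma)\varepsilon$ by means of a Lipschitz estimate on $u$. Parametrise the sup-convolution as $v_\varepsilon(X)=\sup_{\tau_0\in B'_\varepsilon(\tau)} u(X-\tau_0)$, the ball being understood as thin (spatial or space-time, as the case may be). For each such $\tau_0$ the vector remains in the original monotonicity cone $\Gamma(e_n,\theta)$ provided $\varepsilon$ is small, so $w_{\tau_0}(X):=u(X)-u(X-\tau_0)\geq 0$ on $\Psi$; the hypothesis of the lemma gives $w_{\tau_0}(\pob)\geq u(\pob)-v_\varepsilon(\pob)\geq\sigma\varepsilon u(\pob)$, uniformly in $\tau_0$.

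For each fixed $\tau_0$ the translate $u(\cdot-\tau_0)$ satisfies
\[
\mathcal{L}^s u(\cdot-\tau_0)-\partial_t u(\cdot-\tau_0)=\sum_{i,j}\bigl(\asj(X)-\asj(X-\tau_0)\bigr)D_{ij}u(X-\tau_0),
\]
with right-hand side bounded in absolute value by $CAs^\alpha\varepsilon^\alpha u(\pob)$ using the H\"older regularity of the $\asj$ together with the Schauder bound $|D^2 u|\leq C u(\pob)$ valid on $\Phi$. Thus $w_{\tau_0}$ is a non-negative solution of $\mathcal{L}^s-\partial_t$ with right-hand side bounded by the same quantity, and the parabolic Harnack inequality yields, with constants independent of $\tau_0$,
\[
w_{\tau_0}(X)\geq c_1\, w_{\tau_0}(\pob)-C_1 s^\alpha\varepsilon^\alpha u(\pob)\geq \bigl(c_1\sigma\varepsilon-C_1 s^\alpha\varepsilon^\alpha\bigr)u(\pob)\qquad(X\in\Psi).
\]
Taking the infimum over $\tau_0\in B'_\varepsilon(\tau)$ and further decreasing $s_0$ so that $C_1 s^\alpha\varepsilon^\alpha\leq\tfrac{c_1}{2}\sigma\varepsilon$ delivers $u(X)-v_\varepsilon(X)\geq\tfrac{c_1}{2}\sigma\varepsilon u(\pob)$ throughout $\Psi$.

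For the radius-enlargement step, any $Y'\in B'_{(1+h\sigma)\varepsilon}(X)$ can be written as $Y+\xi$ with $Y\in\overline{B'_\varepsilon(X)}$ and $|\xi|\leq h\sigma\varepsilon$. The Schauder estimate together with the Backward Harnack Inequality gives $|\nabla u|\leq C_2 u(\pob)$ on $\Phi$, so $u(Y'-\tau)\leq u(Y-\tau)+C_2 h\sigma\varepsilon u(\pob)$ and hence $v_{(1+h\sigma)\varepsilon}(X)\leq v_\varepsilon(X)+C_2 h\sigma\varepsilon u(\pob)$. Subtracting from the previous bound and choosing $h=c_1/(4C_2)$ yields
\[
u(X)-v_{(1+h\sigma)\varepsilon}(X)\geq \tfrac{c_1}{4}\sigma\varepsilon u(\pob),
\]
which is the claim with $C=c_1/4$. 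The principal obstacle is the Harnack step: in the constant-coefficient setting each $w_{\tau_0}$ is literally caloric and the propagation is immediate, whereas here one must retain the inhomogeneity $s^\alpha\varepsilon^\alpha u(\pob)$ and impose a further smallness requirement on $s$ (now depending on $\sigma$ and $\varepsilon$) to absorb it, in the same spirit as in Lemmas~\ref{lm:2} and~\ref{lm:3}.
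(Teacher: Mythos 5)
Your proof is correct and takes essentially the same route as the paper's: propagate the gap at $\pob$ over $\Psi$ via Harnack applied to the (nonnegative) translate differences, then enlarge the convolution radius from $\varepsilon$ to $(1+h\sigma)\varepsilon$ by a gradient bound, choosing $h$ small to absorb the loss. The paper's proof organizes this slightly differently --- it fixes a boundary direction $\nu$, writes $u(P)-u_1(P+(1+h\sigma)\varepsilon\nu)=W(P)+Y(P)$ with $W(P)=u(P)-u_1(P+\varepsilon\nu)$, Harnacks $W$ and bounds $Y$ by the Carleson/gradient estimate --- but $W$ is your $w_{\tau_0}$ with $\tau_0=\tau-\varepsilon\nu$ and the supremum over $\nu$ is your infimum step, so the two decompositions are the same thing. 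The one place you are actually more careful: the paper says $W(P)$ is ``a solution to a parabolic equation'' and applies Harnack directly, silently ignoring the inhomogeneity $\sum(\asj(\cdot)-\asj(\cdot-\bar\tau))D_{ij}u(\cdot-\bar\tau)$ that arises because translates of a variable-coefficient solution do not solve the same equation; you retain this term, bound it by $CAs^{\alpha}\varepsilon^{\alpha}u(\pob)$ using Schauder, and impose a further smallness condition on $s$ (which is compatible with the $r,\lambda$ coupling used later in the iteration). That is a genuine refinement of a step the paper glosses over, and it is worth making explicit. One small imprecision: the bound $|D^2u|\leq Cu(\pob)$ holds on $\Psi$ (where the weighted Schauder norm has bounded weight), not on all of $\Phi$ where it degenerates near $\partial\Phi$; since the Harnack only needs the inhomogeneity controlled on a slight enlargement of $\Psi$, this is harmless but should be stated on the correct set.
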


\begin{proof}
Write $v_\varepsilon (X) =\sup_{B'_\varepsilon (X)} u_1$ where $u_1(X)=u(X-\tau)$.

Let $\tau \in \Gamma'(e_n,\theta/2)$\textbf{,} with $\varepsilon=|\tau|\sin \frac{\theta}{2}$. For any unit vector $\nu$ (either in space or in $e_n-e_t$ plane depending on whether $\tau \in \Gamma_x$ or $\Gamma_t$) write
\begin{align*}
u(P) -&u_1(P +\varepsilon\nu(1+h\sigma)) \\ &=[u(P)-u_1(P+\varepsilon\nu)]+[u_1(P+\varepsilon\nu) -u_1(P +\varepsilon\nu(1+h\sigma))]\\
&=W(P)+Y(P)\mathbf{.}
\end{align*}
Set $\bar{\tau} =\tau -\varepsilon\nu$. Then $|\bar{\tau}| \geq |\tau|-\varepsilon \geq c\varepsilon$. We estimate $W(P)$ and $Y(P)$ as follows:

$W(P)$ is non-negative (since $\bar{\tau}\in \Gamma(e_n,\theta)$) and a solution to a parabolic equation, hence we can apply the Harnack and conclude
\[
W(P)\geq cW(\pob) \geq c\sigma\varepsilon u(\pob)
\] using our initial assumption~\eqref{eq:lm4}.

For the $Y(P)$ term we apply the fact that $\nabla u \sim \frac{u}{d}$ and the Carleson Estimate. Hence
\[
 |\nabla u_1(P)| \leq Cu_1(P)  \leq Cu_1(\pob) \leq Cu(\pob)\mathbf{.}
\] Here we have used a combination of the Carleson Estimate and the Backward Harnack Inequality to obtain the middle inequality.

Together the estimates for $W(P)$ and $Y(P)$ yield
\[
W(P)+Y(P) \geq c\sigma\varepsilon u(\pob) -Ch\sigma\varepsilon u(\pob) \geq \bar{C}\sigma\varepsilon u(\pob)
\] if $h$ is chosen small enough ($h<\frac{c}{2C}$).

\end{proof}
 Using the Backward Harnack Inequality we have the following corollary.
\begin{cor}
Let $u$ be as in Lemma~\ref{lm:4}, monotone increasing in every direction in $\Gamma'(e_n,\theta)$. Suppose for $\varepsilon>0$, $\sigma>0$ small we have
\[
u(\pob) -v_\varepsilon(\pob) \geq \sigma\varepsilon u(\pob)\mathbf{.}
\]
Then there exists positive constants $C$ and $h$ such that in $\Psi$ we have
\[
u(X) -v_{(1+h\sigma)\varepsilon}(X) \geq C\sigma \varepsilon u(P_0)\mathbf{.}
\]
\end{cor}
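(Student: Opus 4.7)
The plan is to read the corollary as essentially a cosmetic rewriting of Lemma~\ref{lm:4}: the only change between the two statements is that the normalizing factor on the right-hand side is the value of $u$ at the inward reference point $P_0$ instead of at the backward reference point $\pob$. So the entire content is that $u(\pob)$ and $u(P_0)$ are comparable, with a constant that can be absorbed into $C$.

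First I would invoke Lemma~\ref{lm:4} verbatim. Since its hypotheses (monotonicity in every direction in $\Gamma'(e_n,\theta)$ plus the gap estimate at $\pob$) coincide with those of the corollary, we immediately obtain positive constants $C_0$ and $h$ such that, for every $X \in \Psi$,
\[
u(X) - v_{(1+h\sigma)\varepsilon}(X) \geq C_0\,\sigma\varepsilon\, u(\pob).
\]
Next, I would compare $u(\pob)$ with $u(P_0)$. The inward point $P_0$ and the backward point $\pob$ are related by a bounded parabolic distance inside the Lipschitz domain $\Omega_{2r}$, so the Interior Harnack Inequality gives $u(\pob) \leq c\, u(\pof)$, while the Backward Harnack Inequality (applicable under the hypothesis that $M/m$ is controlled, as always throughout this section) gives $u(\pof) \leq c\, u(\pob)$; chaining these together and applying an analogous comparison to $P_0$ (which lies between $\pob$ and $\pof$ in time) yields $u(P_0) \leq c_1\, u(\pob)$ for some $c_1$ depending only on the usual quantities.

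Combining these two observations and setting $C = C_0 / c_1$ gives
\[
u(X) - v_{(1+h\sigma)\varepsilon}(X) \geq C\,\sigma\varepsilon\, u(P_0) \quad \text{for all } X \in \Psi,
\]
which is the desired inequality. There is no genuine obstacle here — the only thing to be mildly careful about is the direction of the Backward Harnack Inequality, since we want to bound $u(\pob)$ from below by $u(P_0)$, not the reverse; this is exactly what the backward inequality (in contrast to the ordinary interior Harnack) supplies. Everything else is a matter of absorbing constants.
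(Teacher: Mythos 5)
Your proposal is correct and matches the paper's own (one-line) justification, which simply invokes the Backward Harnack Inequality to replace $u(\pob)$ by $u(P_0)$ after applying Lemma~\ref{lm:4}. One small remark on your exposition: the first Harnack step you cite, $u(\pob) \leq c\,u(\pof)$, is actually not used; the chain you need is $u(P_0) \leq c\,u(\pof)$ by the forward-in-time Interior Harnack, followed by $u(\pof) \leq c\,u(\pob)$ by the Backward Harnack, giving $u(P_0) \leq c_1\,u(\pob)$, which is exactly what you write in the end.
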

 An application of the geometric cone enlargement lemma due to Caffarelli, Theorem 4.2 in [CS] yields an expansion of the monotonicity cone, either $\Gamma_t$ or $\Gamma_x$, as the case may be. This is stated precisely below.

\begin{cor}\label{cor:int_gain}
Let $u$ be a solution to our free boundary problem
\[
\left\lbrace
\begin{aligned}
&\mathcal{L}u -u_t =0 \quad \text{in } \;\{u>0\}\cup\{u<0\}\\
 &G(u^+_\nu,u^-_\nu)=1 \quad \text{along } \;\partial\{u>0\}
\end{aligned}
\right.
\
\]
and set $u_r = \frac{u(rx,r^2t)}{r}$ a parabolic blow-up. Then there exists an $r_0$ such that if $r\leq r_0$ we have the following:
\begin{enumerate}
\item If $u_r$ is monotone in a spacial cone of directions $\Gamma_0^x(e_n, \theta^x_0)$ then in $\Psi$ $u_r$ is monotone in an expanded cone of spacial directions $\Gamma_1^x(\nu_1,\theta_1^x)$ with defect angle decay given by $\delta_1\leq c\delta_0$ and $|\nu_1-e_n|\leq C\delta_0$.

\item If $u_r$ is monotone in a space-time cone of direction $\Gamma_0^t(\eta_0,\theta_0^t)$, $\eta_0 \in \mathrm{span}\{e_n,e_t\}$, then in $\Psi$ it is monotone in an expanded cone of directions $\Gamma_1^t(\eta_1,\theta_1^t)$ with defect angle decay $\mu_1 \leq c\mu_0$, $\eta_1 \in \mathrm{span}\{e_n,e_t\}$, $|\eta_1-\eta_0| \leq c\mu_0$.
\end{enumerate}
\end{cor}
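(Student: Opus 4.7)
The plan is to assemble the preceding three lemmas plus the geometric cone enlargement of \cite{CS} and translate the resulting sup-convolution gap into an enlarged cone of monotonicity. First I would fix a small $r_0$ and replace $u$ by the parabolic rescaling $u_r$; the dilated coefficients satisfy $|a_{ij}^r-\delta_{ij}|\le Ar^\alpha$, so for $r\le r_0$ the hypothesis ``$\mathcal{L}^s$ is suitably close to $\Delta$'' needed for Lemma~\ref{lm:2}--\ref{lm:4} is available. The monotonicity cone assumption in the statement is exactly what those lemmas need; the free boundary condition plays no role here, as emphasized at the start of Section 4.

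Next I would trim the cone of directions, discarding the vectors that make an angle greater than $\tfrac{99\pi}{200}$ with the normalized gradient $\nabla=\nabla u_r(\pob)/|\nabla u_r(\pob)|$, to obtain $\Gamma_0'$ as in the discussion preceding Lemma~\ref{lm:3}. For any $\tau\in\Gamma_0'$ (with $|\tau|$ sufficiently small and $\varepsilon=|\tau|\sin(\theta_0/2)$) Lemma~\ref{lm:3} produces the sup-convolution gap
\[
u_r(\pob)-v_\varepsilon(\pob)\ge \sigma\varepsilon\, u_r(\pob), \qquad \sigma=c\delta_0,
\]
where $v_\varepsilon$ is the sup-convolution over the appropriate thin ball (purely spatial in case (1), in the $e_n$--$e_t$ plane in case (2)), provided $r\le (c_1c_3\delta_0/2c_2)^{1/\beta}$. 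I would then invoke Lemma~\ref{lm:4} (and its corollary, which uses the Backward Harnack Inequality to express the right-hand side in terms of $u_r(P_0)$ rather than $u_r(\pob)$) to propagate this inequality throughout $\Psi$:
\[
u_r(X)-v_{(1+h\sigma)\varepsilon}(X)\ge C\sigma\varepsilon\, u_r(P_0), \qquad X\in\Psi.
\]

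The final step is Caffarelli's geometric cone enlargement lemma (Theorem 4.2 in \cite{CS}): a quantitative gap between $u_r$ and the sup-convolution over the enlarged thin ball $B_{(1+h\sigma)\varepsilon}'$ translates into monotonicity in a larger cone. Applied with the spatial thin balls in case (1) one obtains a new spatial cone $\Gamma_1^x(\nu_1,\theta_1^x)$ with defect angle $\delta_1\le c\delta_0$ and axis rotation $|\nu_1-e_n|\le C\delta_0$; applied with the thin balls in the $e_n$--$e_t$ plane in case (2) the same machinery produces the space-time cone $\Gamma_1^t(\eta_1,\theta_1^t)$ with $\eta_1\in\mathrm{span}\{e_n,e_t\}$, $\mu_1\le c\mu_0$, $|\eta_1-\eta_0|\le c\mu_0$. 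Because the calculations of Lemmas~\ref{lm:2}--\ref{lm:4} were written uniformly for both choices of thin ball, no separate argument is needed for the two cases.

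The step I expect to require most care is verifying that the trimming of the cone does not interfere with the final statement: after removing the directions nearly perpendicular to $\nabla$ we must still have a cone of opening at least $\theta_0^x$ (respectively $\theta_0^t$) up to a loss controlled by $\delta_0$, which is what guarantees $|\nu_1-e_n|\le C\delta_0$ rather than a crude $O(1)$ rotation. Everything else is a bookkeeping exercise assembling the estimates already proved and invoking \cite{CS}.
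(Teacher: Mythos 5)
Your proposal matches the paper's (largely implicit) proof: the paper itself only remarks that Corollary~\ref{cor:int_gain} follows by assembling Lemmas~\ref{lm:2}--\ref{lm:4}, the trimming discussion, the Backward-Harnack corollary, and then invoking Theorem~4.2 in \cite{CS}, which is precisely the chain you lay out, including the observation that the argument applies uniformly to the spatial and space-time thin balls and that the threshold $r_0$ enters via the $s^\beta$ error terms. The only minor imprecision is that the sup-convolution estimate preceding Lemma~\ref{lm:3} is taken over $\tau\in\Gamma'(e_n,\theta/2)$ (the half-opening trimmed cone), not all of $\Gamma_0'$, but this does not affect the structure of the argument.
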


\section{Propagation Lemma}

It is not possible to propagate the uniform gain in the monotonicity cone proved in the previous section to the free boundary. Instead only a portion of the gain can be propagated. This is accomplished by using a family of sup-convolutions with a variable radius.

 For a positive function $\phi$ and direction $\tau$ define the sup-convolution
\[
v_{\phi,\tau}(p) =\sup_{B_{\phi(p)}}u(q-\tau).
\]
Also, let $\mathcal{C}_{R,T} =B_R^\prime\times (-T,T)\mathbf{.}$

    In the sequel, we will need suitable versions of  Lemmas 3.1 \& 3.3 in [FS2]. The first describes a condition that $\phi$ needs to satisfy in order to make $v_{\phi,\tau}$ a sub/super-solution to our operator. The second establishes the existence of a family of functions satisfying this condition, among others.
 \begin{lm}
 Let $u$ be a solution to our free boundary problem for the operator $\mathcal{L}-D_t$. Let $\varepsilon_0$ be small enough and $\phi \in C^2(\bar{\mathcal{C}}_{R,T})$ be a strictly positive function. Let $\omega \leq \omega(\phi_{\mathrm{MAX}})$. Assume that in a smaller cylinder $\mathcal{C}^\prime \subset \mathcal{C}_{R,T}$ with $\mathrm{dist}(\mathcal{C}^\prime, \partial \mathcal{C}_{R,T}) \geq \rho \gg \varepsilon_0$ $D_t\phi\geq 0$ and
 \[
\mathcal{L}(\phi) -c_1 D_t\phi \geq C\frac{|\nabla \phi|^2 +\omega^2}{\phi} +c_2(|\nabla \phi |+\omega)
\] for some positive constants $C_0$, $C$, $c_1$ and $c_2$ depending only on $n,\lambda,\Lambda, \rho$.

Then in both $\Omega^\pm(v_{\psi,\tau}) \cap \mathcal{C}'$, $v_{\phi,\tau}$ is a viscosity subsolution to the operator $\mathcal{L} -D_t$.
 \end{lm}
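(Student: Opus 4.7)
The plan is to run the standard sliding argument for a sup-convolution with variable radius: starting from a test function touching $v_{\phi,\tau}$ from above, I identify the direction in which the supremum is realized, slide a shifted copy of $u$ underneath along that direction, and then apply the chain rule so that the assumed differential inequality on $\phi$ exactly absorbs the cross-terms that appear.

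Let $P$ be a $C^2$ test function touching $v_{\phi,\tau}$ from above at $X_0 \in \Omega^\pm(v_{\phi,\tau})\cap\mathcal C'$; I must show $\mathcal L P(X_0) - P_t(X_0) \geq 0$. By continuity the supremum in $v_{\phi,\tau}(X_0)=\sup_{q\in\overline{B_{\phi(X_0)}(X_0)}}u(q-\tau)$ is attained at some $q_0$, and the strict monotonicity of $u$ along the restricted cone $\Gamma'$ from Corollary~\ref{cor:int_gain} forces $q_0$ onto the bounding sphere, so $q_0 - X_0 = \phi(X_0)\nu$ for a unit vector $\nu$ (purely spatial if $\tau\in\Gamma_x'$, in the $e_n$--$e_t$ plane if $\tau\in\Gamma_t'$). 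I then slide by setting
\[
y(X) = X + \phi(X)\nu, \qquad \tilde u(X) = u(y(X) - \tau).
\]
Because $|y(X)-X|=\phi(X)$, we have $\tilde u\leq v_{\phi,\tau}\leq P$ with equality at $X_0$, so $P$ also touches the locally smooth function $\tilde u$ from above at $X_0$, yielding
\[
\mathcal L P(X_0) - P_t(X_0) \geq \mathcal L \tilde u(X_0) - \tilde u_t(X_0),
\]
and it suffices to prove the right-hand side is non-negative.

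The second step is to expand $\mathcal L\tilde u - \tilde u_t$ by the chain rule. Writing $y_0 = y(X_0)$ and $w = \nabla u(y_0-\tau)\cdot\nu$, the computation produces
\begin{align*}
\mathcal L\tilde u(X_0) - \tilde u_t(X_0) &= \bigl(\mathcal L u - u_t\bigr)(y_0-\tau) + w\bigl[\mathcal L\phi - c_1\phi_t\bigr](X_0) \\
&\quad + \sum_{i,j}\bigl[a_{ij}(X_0) - a_{ij}(y_0-\tau)\bigr]u_{y_iy_j}(y_0-\tau) - \mathcal E,
\end{align*}
where $c_1$ comes from the $e_t$-component of $\nu$ and $\mathcal E$ gathers the Hessian cross-terms $a_{ij}u_{y_iy_k}\nu_k\phi_{x_j}$ and $a_{ij}u_{y_ky_l}\nu_k\nu_l\phi_{x_i}\phi_{x_j}$. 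The first term vanishes because $u$ solves its equation pointwise at $y_0-\tau$. The coefficient $w$ is strictly positive and in fact $w\gtrsim u(\pob)$: this follows from $\nu\in\Gamma'$ together with the non-degeneracy $|\nabla u|\sim u/d$ noted at the end of Section 3.1. Using $|\nabla u|\leq Cu/d$, $|D^2u|\leq Cu/d^2$ (Schauder), $u\leq Cu(\pob)$ (Carleson plus Backward Harnack), and the interior distance $d\gtrsim\rho$ provided by $\mathcal C'\subset\mathcal C_{R,T}$, after dividing by $w$ the $\mathcal E$ term is bounded by $C|\nabla\phi|^2/\phi + c_2|\nabla\phi|$; the coefficient-mismatch term, via the H\"older bound $|a_{ij}(X_0)-a_{ij}(y_0-\tau)|\leq A(\phi(X_0)+|\tau|)^\alpha$, contributes at most $C\omega^2/\phi + c_2\omega$ with $\omega=\omega(\phi_{\max})$ absorbing both $\phi_{\max}^\alpha$ and $|\tau|^\alpha$. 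Rearranging, the required inequality $\mathcal L\tilde u(X_0) - \tilde u_t(X_0)\geq 0$ reduces to precisely the standing hypothesis
\[
\mathcal L\phi - c_1\phi_t \geq C\frac{|\nabla\phi|^2+\omega^2}{\phi} + c_2(|\nabla\phi|+\omega).
\]

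\textbf{Main obstacle.} The genuinely new difficulty compared to the constant-coefficient Stefan-type argument of [FS2] is the coefficient-mismatch term $\sum[a_{ij}(X_0)-a_{ij}(y_0-\tau)]u_{y_iy_j}$, which vanishes identically when $\mathcal L=\Delta$. Forcing it into the $\omega^2/\phi + \omega$ shape required by the hypothesis relies on the H\"older modulus of $a_{ij}$, on the Schauder bound $|D^2u|\lesssim u/d^2$ (which in turn requires the interior distance $d\gtrsim\rho$, i.e.\ the assumption $\rho\gg\varepsilon_0$), and on the non-degeneracy $w\gtrsim u(\pob)$ from the restricted cone. A secondary bookkeeping nuisance is giving a unified treatment of the $\tau\in\Gamma_x'$ and $\tau\in\Gamma_t'$ cases, which differ in whether $\nu$ has a time component and hence in the exact value of the constant $c_1$ that appears in the hypothesis.
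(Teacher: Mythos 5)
The paper does not actually prove this lemma: the statement is lifted (with minor simplification) from Lemma 3.1 of [FS2], and the paper's ``proof'' is the Remark immediately following it, which explains why the [FS2] result transfers --- the free boundary condition on $u$ plays no role in that argument, and since $\mathcal P^-(\phi)-c_1D_t\phi \leq \mathcal L\phi - c_1 D_t\phi$ the stated inequality on $\phi$ implies the Pucci-operator version required in [FS2]. Your proposal instead reconstructs the underlying proof of the cited [FS2] lemma directly, and the reconstruction is the right mechanism: given a test function touching $v_{\phi,\tau}$ from above at $X_0$, strict monotonicity of $u$ in the direction $e_n$ puts the supremum over $\overline{B_{\phi(X_0)}(X_0)}$ on the bounding sphere, one slides $\tilde u(X)=u\bigl(X+\phi(X)\nu-\tau\bigr)$ underneath, expands by the chain rule, and then the Hessian cross-terms and the variable-coefficient mismatch $\sum\bigl[a_{ij}(X_0)-a_{ij}(y_0-\tau)\bigr]u_{y_iy_j}$ are exactly what the assumed differential inequality on $\phi$ must absorb. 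You also correctly identify the coefficient mismatch as the part that distinguishes this from the heat-equation case of [CS]/[ACS]. So you have reproduced, rather than replaced, the route the paper delegates to [FS2].

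Two places in your sketch need repair before it would pass as a complete proof. First, you never use the hypothesis $D_t\phi\geq 0$, which is an explicit assumption of the lemma; in [FS2] it governs the sign of the $\phi_t$ contribution that appears in the chain-rule identity. Relatedly, the claim that ``$c_1$ comes from the $e_t$-component of $\nu$'' is off: the chain rule produces the plain coefficient $1$ in front of $\phi_t$ (since $\tilde u_t = u_t + (D_\nu u)\,\phi_t$), and $c_1$ in [FS2] is there to give slack for the orientation issue that $D_t\phi\geq 0$ addresses. A proof that does not touch this hypothesis should make you suspicious. Second, when $\tau\in\Gamma_t'$ so that $\nu$ has a nonzero time component, your $\mathcal E$ contains $u_{tt}(\nu^t)^2\phi_{x_i}\phi_{x_j}$ and $u_{x_it}\,\nu^t\,\phi_{x_j}$. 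The interior parabolic Schauder estimate $|u|^*_{2+\alpha}\lesssim |u|_0$ gives $|D^2_xu|,\ |u_t|\lesssim u/d^2$ together with their parabolic $\alpha$-H\"older seminorms; it does \emph{not} give a pointwise bound on $u_{tt}$, and with merely $C^\alpha$ coefficients one cannot differentiate $u_t=\mathcal L u$ in $t$ to obtain one. This is exactly the kind of detail that makes the $\Gamma_t'$ case in [FS2] delicate, and a self-contained reproof has to either avoid $u_{tt}$ in the expansion or explain how it is controlled. Until those two points are addressed your argument is a faithful plan, not yet a proof.
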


 \textbf{Remark:} In [FS2] this lemma is stated for a family of operators and is therefore slightly different, and more complex, than the version we have stated. We do not require this in our case. Additionally, in [FS2], the lemma is stated for a solution $u$ to the Stefan problem, but the free boundary condition does not play a role in the proof and therefore the same result holds for our problem. Finally, their lemma has the Pucci extremal operator $\mathcal{P}^-$ on the left of the inequality instead of $\mathcal{L}$. Since $\mathcal{P}^-(\phi)-c_1D_t\phi \leq \mathcal{L}(\phi) -c_1D_t\phi$ by the properties of the Pucci operator, we are justified in making this substitution.\\

 Next define the region
 \[
 D=\left[ B_1'\setminus \left( \bar{B}'_{1/8}(x_0)\right)    \right]\times (-T,T).
 \]
From Lemma 3.3 \cite{FS2} we have the following:

 \begin{lm} Let $T>0$ and $C>1$. There exists positive constants $\bar{C}= \bar{C}(T,C)$, $k=k(T,C)$, and $h'_0 =h_0(T,C)$ such that for any $0<h'<h'_0$ there is a family of $C^2$ functions $\phi_\eta$, $0\leq \eta \leq 1$, defined in the closure of $D$ such that
 \begin{enumerate}
 \item $1-\omega\leq \phi_\eta \leq 1+\eta h'$
 \item $\mathcal{L}\phi_\eta-c_1D_t\phi_\eta -C\frac{|\nabla \phi|^2+\omega^2}{\phi_\eta} -c_2(|\nabla \phi|+\omega) \geq 0 $ in $D$
 \item $\phi_\eta\geq 1+k\eta h'$ in $B'_{1/2}\times(\frac{-T}{2},\frac{T}{2})$
 \item $\phi_\eta \leq 1$ in $D\setminus (B'_{7/8}\times (-\frac{7T}{8},\frac{7T}{8}))$
 \item $D_t\phi_\eta,  \leq \bar{C}\eta h'$ and $|\nabla \phi_\eta| \leq \bar{C}(\eta h'+\omega)$ in $\bar{D}$
 \item $D_t\phi_\eta \geq 0$ in $D$
 \end{enumerate}
 Here the $\omega$ appearing in (2) is a small positive constant; that is to say that if $c_1$, $c_2$, and $\omega$ are small positive constants depending on $n,\lambda,\Lambda,C$ then it is possible to construct this family.
 \end{lm}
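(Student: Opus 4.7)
The plan is to construct $\phi_\eta$ explicitly as a small non-negative perturbation of the constant function $1$, following the template of Lemma~3.3 in [FS2] and adjusting for our operator $\mathcal{L}$ in place of the Pucci extremal operator used there. The first step is to produce an auxiliary barrier $g \in C^2(\bar D)$ satisfying, simultaneously: $g \geq 0$, $D_t g \geq 0$, $g$ is bounded below by a positive constant $k_0 = k_0(T)$ on $B'_{1/2} \times (-T/2, T/2)$, $g$ vanishes on the appropriate portion of the parabolic boundary of $D$ (yielding (4)), and the pointwise differential inequality
\[
\mathcal{L} g - c_1 D_t g \;\geq\; C_0 \bigl(1 + |\nabla g|^2\bigr)
\]
holds throughout $D$ for some fixed $C_0 > 0$. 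The natural candidate is of product form $g(x,t) = \chi(x)\zeta(t)$, where $\chi$ is built from an exponential radial profile such as $\chi(x) = A\bigl[e^{-\lambda |x - x_0|^2} - e^{-\lambda R^2}\bigr]_+$ suitably regularized to $C^2$, and $\zeta$ is a smooth non-decreasing time cutoff. Because points of $D$ stay at distance $\geq 1/8$ from $x_0$, $\Delta \chi$ has a definite positive sign for large enough $\lambda$, and the $C^{0,\alpha}$ closeness of the $a_{ij}^s$ to $\delta_{ij}$ then transfers this to $\mathcal L \chi$.

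Next, set $\phi_\eta(x,t) = 1 + \eta h'\, g(x,t)$. Properties (1), (3), (4), (5), (6) are then almost immediate from the qualitative features of $g$: after normalizing $\sup g = 1$ we have $0 \leq \eta h' g \leq \eta h'$, which is (1); (3) reads $\eta h' g \geq k_0 \eta h'$ on the inner region, giving $k = k_0$; (5) follows from $|\nabla \phi_\eta| = \eta h' |\nabla g|$ and $D_t\phi_\eta = \eta h' D_t g$, both bounded by $\bar C \eta h'$ on $\bar D$; and (6) is $D_t g \geq 0$. The only genuine content is (2). Computing directly,
\[
\mathcal{L}\phi_\eta - c_1 D_t \phi_\eta \;=\; \eta h'\bigl(\mathcal{L} g - c_1 D_t g\bigr) \;\geq\; C_0 \eta h' \bigl(1 + |\nabla g|^2\bigr),
\]
while the right-hand side of the required inequality is dominated by
\[
C' \bigl[(\eta h')^2 |\nabla g|^2 + \omega^2 + \eta h'|\nabla g| + \omega\bigr].
\]
Taking $h'_0$ small enough that $\eta h' \leq h'_0 \ll 1$ absorbs $(\eta h')^2 |\nabla g|^2$ and $\eta h'|\nabla g|$ into $C_0 \eta h'(1 + |\nabla g|^2)$; then selecting $\omega$ small depending on $h'_0$ (so that $\omega^2 + \omega \ll \eta h'$ for the allowed range of $\eta$) dispatches the remaining terms. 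These choices produce the constants $\bar C$, $k$, $h'_0$ of the statement.

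The main obstacle is the first step: constructing a barrier $g$ whose second-derivative action under $\mathcal L$ dominates the square of its first-derivative norm uniformly across the transition zone $\{0 < g < \sup g\}$. A polynomial cutoff would only yield $|\nabla g|^2 \sim |\mathcal{L} g|$ and defeat the absorption, which is why the exponential profile is essential: it makes $\mathcal{L}\chi$ grow quadratically in $|\nabla \chi|$, opening the gap we need. A related subtlety is reconciling monotonicity (6) with the vanishing prescription (4), which forces $\zeta$ to be monotone non-decreasing and $\chi$ to absorb all the spatial cutoff; this is consistent provided (4) is read as prescribing behavior on the lateral and backward-time portions of $\partial_p D$, as in [FS2].
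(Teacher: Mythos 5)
The paper does not actually prove this lemma: it invokes Lemma~3.3 of [FS2] and, in the remark that follows, only records the two adaptations needed (a single excised ball rather than two, and $\mathcal{L}$ in place of the Pucci operator $\mathcal{P}^-$, justified by $\mathcal{P}^-\phi \leq \mathcal{L}\phi$). Your proposal attempts to rebuild the barrier from scratch, which is a legitimate thing to try, but the ansatz $\phi_\eta = 1 + \eta h'\, g$ with $g \geq 0$ has a gap at small $\eta$ that no choice of constants repairs. At $\eta = 0$ your $\phi_0$ is the constant function $1$, so the left side of (2) reduces to $-C\omega^2 - c_2\omega < 0$; and more generally your absorption step asserts $\omega^2 + \omega \ll \eta h'$, which cannot hold uniformly over the required range $0 \leq \eta \leq 1$ once $\omega > 0$ is a fixed constant. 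For $\eta h' \lesssim \omega$ the $\omega$-terms on the right of (2) overwhelm the $O(\eta h')$ positivity your barrier produces.

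The shape of the lemma's conclusions already signals the missing piece. The lower bound $1 - \omega$ in (1) and the gradient bound $\bar{C}(\eta h' + \omega)$ in (5) show that the [FS2] barrier carries a second component scaled by $\omega$, not only a bump of size $\eta h'$. One needs an ansatz of the rough form $\phi_\eta = 1 - \omega\psi + \eta h'\, g$ with $0 \leq \psi \leq 1$ and $\mathcal{L}\psi - c_1 D_t\psi$ strictly negative with definite strength in $D$, so that $-\omega(\mathcal{L}\psi - c_1 D_t\psi)$ contributes a positive, $\eta$-independent term of order $\omega$ to the left of (2), large enough to dominate $C\omega^2 + c_2\omega$ for every $\eta$ including $\eta = 0$. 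A related symptom in your write-up: the barrier inequality you require, $\mathcal{L}g - c_1 D_t g \geq C_0(1 + |\nabla g|^2)$, cannot hold near the outer lateral boundary where $g$ and its derivatives vanish, precisely because of the constant $1$ on the right; that constant is the stand-in for the $\omega$-terms that the second, $\omega$-scaled component is there to handle. Finally, your observation that (4) cannot be read literally alongside (3) and (6) (it would force a contradiction for $t$ near $T$) is correct; (4) should be understood as governing the initial and lateral parts of the boundary only, as in [FS2].
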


\textbf{Remark:} This is essentially Lemma 3.3 in [FS2] with only two small differences. First, as noted above, our domain has only the one hole; this causes only small and obvious alterations to the construction. Second, similar to the previous lemma, Lemma 3.3 in [FS2] has the Pucci extremal operator $\mathcal{P}_-$ instead of $\mathcal{L}$ in item (2). As in the previous remark, we are justified in this substitution by the properties of $\mathcal{P}^-$.

This concludes the essential properties of the variable radii functions $\phi_\eta$. \\

In what follows we will use the family $\varepsilon \phi_{\sigma \eta}$. The $\sigma \eta$ term presents no difficulties but the derivative inequality which must be satisfied in order for the sup-convolutions to be subsolutions is not homogeneous in $\phi$. Precisely, when we replace $\phi$ with $\varepsilon\phi$ in item (2) of the above lemma we have
\[
\mathcal{L}\varepsilon\phi_\eta-c_1D_t\varepsilon\phi_\eta -C\frac{|\nabla \varepsilon\phi|^2+\omega^2}{\varepsilon\phi_\eta} -c_2(|\nabla \varepsilon\phi|+\omega)\mathbf{.}
\]
The presence of the $\omega$ terms prevents us from simply factoring out an $\varepsilon$. Rearranging we have
\[
\left(\mathcal{L}\varepsilon\phi_\eta-c_1D_t\varepsilon\phi_\eta -C\frac{|\nabla \varepsilon\phi_\eta|^2}{\varepsilon\phi_\eta}  -c_2|\nabla \varepsilon\phi_\eta|\right) -C\frac{\omega^2}{\varepsilon\phi_\eta}-c_2\omega\mathbf{.}
\]
Owing to the condition initially satisfied by $\phi_\eta$, the term in parentheses will be strictly positive provided $\omega$ is strictly positive (which it will be for a variable coefficient problem). So if we place the additional condition $\omega\leq \varepsilon^2$ then for sufficently small $\varepsilon$ the family $\varepsilon\phi_{\sigma\eta}$ will satisfy the desired inequality.\\

This condition is what restricts us to using $\varepsilon$-monotonicity. In our later work we take $\varepsilon =|\tau|\sin \delta$, $\tau$ and $\delta$ coming from the monotonicity cone. Since we are also requiring $\omega \leq \varepsilon^2$ we see that some restriction on the length of $\tau$ is necessary. We cannot take $\tau$ to be arbitrarily small since that would in turn force the oscillation of of coefficient matrix, which is measured by $\omega$, to be zero reducing the problem to the constant coefficient case.\\

We now prove our version of the propagation lemma used in this problems. From now on we will assume that $\omega_0 \leq \varepsilon^2$ with $\varepsilon\leq \varepsilon_0$. We will make use of standard asymptotic development results for both $u$ and $v_\ep$ as in \cite{B}.

\newcommand{\veta}{v_\eta}
\newcommand{\vetab}{\bar{v}_\eta}

\begin{lm}\label{lm:propagation}
Let $u_1$ and $u_2$ be two viscosity solutions to our problem in $B'_2\times (-2,2)$ and $F(u_2)$ Lipschitz continuous with $(0,0)\in F(u_2)$. Assume
\begin{enumerate}
\item In $B'_1\times(-T,T)$
\[
v_\varepsilon (x,t) = \sup_{B_\varepsilon(x,t)}u_1 \leq u_2(x,t)
\]
\item For some $\sigma$ positive and some $h$ small and $(x,t)\in B_{1/8}(x_0)\!\times\! (-T,T) \subset {\{u_2>0\}}$
\[
u_2(x,t) -v_{(1+h\sigma)\varepsilon}(x,t) \geq C\sigma\varepsilon u_2(x_0,0)
\]
\item $\omega_0$ is sufficiently small (as above).
\end{enumerate}
Then if $\varepsilon>0$ and $h>0$ are small enough, there exists $k\in (0,1)$ such that in $B_{1/2}\times (-\frac{T}{2},\frac{T}{2})$
\[
v_{(1+kh\sigma)\varepsilon}(x,t)\leq u_2(x,t)
\]
\end{lm}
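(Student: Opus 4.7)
The plan is to run the standard continuous-deformation/contact-point argument of Caffarelli, using the family of sup-convolutions with variable radius furnished by Lemma~5.2. Define
\[
w_\eta(X)=v_{\varepsilon\phi_{\sigma\eta},\tau}(X)=\sup_{B_{\varepsilon\phi_{\sigma\eta}(X)}(X)}u_1(Y-\tau),\qquad \eta\in[0,1].
\]
The goal is to show $w_\eta\leq u_2$ on a suitable working region for every $\eta\in[0,1]$; once that is done, property~(3) of $\phi_\eta$ gives $\phi_\sigma\geq 1+k\sigma h$ in $B'_{1/2}\times(-T/2,T/2)$ at $\eta=1$, so that
$v_{(1+k\sigma h)\varepsilon}\leq w_1\leq u_2$ there, which is the conclusion.

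By the discussion immediately preceding the lemma and the standing assumption $\omega_0\leq\varepsilon^2$, the radius $\varepsilon\phi_{\sigma\eta}$ satisfies the differential inequality of Lemma~5.1, so each $w_\eta$ is a viscosity subsolution of $\mathcal{L}-\partial_t$ in $\Omega^\pm(w_\eta)\cap\mathcal{C}'$. Because condition~(2) of Lemma~5.2 holds with strict slack coming from the $\omega^2$ and $|\nabla\phi|$ terms, $w_\eta$ is in fact a \emph{strict} PDE subsolution. Moreover, the variable-radius sup-convolution structure, together with $|\nabla\phi_\eta|>0$ and the strict monotonicity of $G$ in both arguments, forces $G(w^+_{\eta,\nu},w^-_{\eta,\nu})>1$ on $\fb(w_\eta)$, so $w_\eta$ is a strict free boundary subsolution as well.

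The heart of the proof is then the contact analysis. Let $\eta^*=\sup\{\eta\in[0,1]:w_\eta\leq u_2\text{ on the closed working region}\}$; hypothesis~(1) together with $\phi_0\leq 1$ gives $\eta^*>0$. Assume for contradiction that $\eta^*<1$; by continuity there is a first contact point $P^*$ with $w_{\eta^*}(P^*)=u_2(P^*)$. I rule out each possible location of $P^*$: (i) on the portion of the parabolic boundary where $\phi_{\sigma\eta^*}\leq 1$ by property~(4), since there $w_{\eta^*}\leq v_\varepsilon\leq u_2$ trivially; (ii) in or near the hole $B'_{1/8}(x_0)\times(-T,T)$, where $\phi_{\sigma\eta^*}\leq 1+\sigma h$ combined with hypothesis~(2) produces the uniform gap $u_2-w_{\eta^*}\geq C\sigma\varepsilon\,u_2(x_0,0)>0$; (iii) in the interior of $\Omega^\pm(u_2)\cap\Omega^\pm(w_{\eta^*})$, where the parabolic strong maximum principle forbids contact between a strict subsolution and a solution; (iv) on the common free boundary, where the strict inequality $G(w^+_{\eta^*,\nu},w^-_{\eta^*,\nu})>1=G(u_2^+_\nu,u_2^-_\nu)$ together with the asymptotic developments at regular free boundary points recalled from \cite{B} gives the contradiction. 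All locations being excluded, $\eta^*=1$.

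The hardest part will be case~(iv): rigorously establishing the strict free-boundary subsolution inequality for $w_\eta$ in the variable-coefficient setting and matching it against the asymptotic expansion of $u_2$ at $P^*$. Lemma~5.1 only yields the PDE portion of the subsolution property, so the free-boundary portion must be extracted separately from the sup-convolution structure, with the estimate $|u-z|^*_{2+\alpha,\Phi}\leq Ks^\beta u(\pob)$ from Section~4 controlling the variable-coefficient error; the restriction $\omega\leq\varepsilon^2$ is precisely what ensures this error stays below the gain produced by $|\nabla\phi_\eta|>0$, so that the strict FB inequality survives. Once this is in place, the contact analysis at the free boundary follows the constant-coefficient template.
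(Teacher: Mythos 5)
Your overall architecture is the right one (deform the radius via $\phi_{\sigma\eta}$, run an open--closed / contact-point argument, use property~(3) of the $\phi_\eta$ family at $\eta=1$), but there is a genuine gap at the step you yourself flagged as the hardest, and it is fatal as written. You claim that ``the variable-radius sup-convolution structure, together with $|\nabla\phi_\eta|>0$ and the strict monotonicity of $G$, forces $G(w^+_{\eta,\nu},w^-_{\eta,\nu})>1$.'' That is the opposite of what the variable radius does. From [CS, 9.14] the asymptotic slopes of $v_{\varepsilon\phi_{\sigma\eta},\tau}$ at a contact point satisfy
\[
|a_1^\pm - a^\pm| \le c\bigl(D_t(\varepsilon\phi_{\sigma\eta})+|\nabla(\varepsilon\phi_{\sigma\eta})|\bigr) \le \bar c\,\sigma h\,\varepsilon ,
\]
so the perturbation can \emph{decrease} $a^+$ and \emph{increase} $a^-$, and hence $G(a^+,a^-)$ can drop \emph{below} $1$. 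Lemma~5.1 only gives the PDE subsolution property in $\Omega^\pm$; it says nothing about the free-boundary inequality, and there is no slack hiding in property~(2) of Lemma~5.2 that produces a strict FB subsolution. Consequently your case~(iv) does not close.

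What the paper does, and what is missing from your sketch, is to introduce an auxiliary $\mathcal L$-caloric correction. Define $w$ by $\mathcal L w - w_t = 0$ in $D\cap\{u_2>0\}$, $w=u(P_0)$ on $\partial B_{1/8}(P_0)\times(-9T/10,9T/10)$, and $w=0$ on the rest of $\partial_p D$, and then compare $u_2$ not with $v_\eta=v_{\varepsilon\phi_{\sigma\eta},\tau}$ but with $\bar v_\eta = v_\eta + c\sigma\varepsilon\,w$. The constant $c$ is fixed so that $\bar v_0\le u_2$, using hypothesis~(2) and the Harnack inequality: this is how the interior gap on the inner sphere is ``spent.'' At a free-boundary contact point, the boundary Harnack principle gives $w\sim u_2$, so $w$ contributes a term $c\sigma\varepsilon a_2^+$ to the positive slope of $\bar v_\eta$; thus $\bar a^+ = a^+ + c\sigma\varepsilon a_2^+$, and now
\[
G(\bar a^+, a^-) \ge 1 + L_G\sigma\varepsilon(-2\bar c h + c a_2^+) > 1
\]
for $h$ small, using the non-degeneracy $a_2^+\ge c>0$. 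It is this boosted $\bar a^+$, not any intrinsic strictness of the sup-convolution, that produces the contradiction with the Hopf-lemma comparison $a_2^+ > \bar a^+$, $a_2^- \le a^-$. Your case~(iii) also needs this: it is $\bar v_\eta$ (not $w_\eta$ alone) that is a subsolution in the interior, and the open--closed argument is run on the containment of positivity sets $\{\bar v_\eta>0\}\cap D\Subset\{u_2>0\}\cap D$ rather than on a pointwise strict-subsolution comparison. Finally, your appeal to the Section~4 estimate $|u-z|^*_{2+\alpha,\Phi}\le K s^\beta u(\pob)$ is out of place here; that estimate is used for the interior cone enlargement, not the propagation step, and $\omega\le\varepsilon^2$ enters this lemma only through the construction of the $\phi_\eta$ family and through majorizing the $C\omega\varepsilon$ term by the linear term in the [CS, 9.14] bound.
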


 \begin{proof}
 Define $w(x,t)$ as follows:
\begin{align*}
\mathcal{L}w-w_t &=0 \quad \text{in } D\cap \{u_2>0\}\\
w &=u(P_0) \quad \text{on } \partial B_{1/8}(P_0)\times (-9T/10,9T/10)\\
w &=0 \quad \text{on the rest of } \partial_pD.
\end{align*}
Next, using the family constructed above with $\varepsilon\leq \varepsilon_0$, set
 \begin{align*}
 \veta &= v_{\varepsilon \phi_{\sigma \eta}}\\
 \vetab &= \veta +c\sigma \varepsilon w(x,t).
 \end{align*}

The constant $c$ is chosen to make $\vetab \leq u_2$ on $\partial_p [B'_{1/8}(P_0)\times(-9T/10,9T/10)]$. This is possible by the second hypothesis and the Harnack inequality. This ensures that $\bar{v}_0 \leq u_2$.

  We now demonstrate that the set of $\eta$ for which $\vetab \leq u_2$ is all of $[0,1]$. This is accomplished by showing that the set of $\eta$ for which we have $\{\vetab >0\}\cap D \Subset \{u_2>0\}\cap D$ is both open and closed; by construction the set is non-empty. The set is closed since the quantities involved vary continuously. We show it is open by supposing that there is an $\eta$ for which the free boundaries touch, that is $\vetab (x_0,t_0) =u_2(x_0,t_0)=0$.

 All points are regular from the right for $\vetab$ by properties of the sup-convolution. Since $\vetab$ touches $u_2$ at $(x_0,t_0)$, this point will be right regular for $u_2$. Additionally, by the assumption that $\vetab (x_0,t_0) =u_2(x_0,t_0)=0$, we have that $\veta (x_0,t_0)=0$ as well since $w$ vanishes where $u_2$ does. This means that the corresponding point $(y_0,s_0)$ on the free boundary of $u_1$ is left regular. Therefore, appealing to the asymptotic development results in \cite{B} we have
 \begin{align*}
 u_1 &\geq a_1^+\langle y-y_0, \nu_1 \rangle -a_1^-\langle y-y_0,\nu_1 \rangle +o(|y-y_0|)\\
 &\text{with } G(a_1^+,a_1^-) \geq 1 \text{ and equality along } t=-\gamma \langle y-y_0,\nu_1\rangle \gamma>0\\
 u_2 &\leq a_2^+\langle x-x_0, \nu_2 \rangle -a_2^-\langle x-x_0,\nu_2 \rangle +o(|x-x_0|) \\
&\text{with }G(a_2^+,a_2^-) \leq 1 \text{ and equality along }t=-\gamma \langle x-x_0,\nu_1\rangle, \gamma>0\\
 \veta &\geq a^+\langle x-x_0, \nu^* \rangle -a^-\langle x-x_0,\nu^* \rangle +o(|x-x_0|)\\
 \end{align*}
  where $\nu_1 =\frac{y_0-x_0}{|y_0-x_0|}$, $a^{\pm} =a_1^{\pm}|\tau|$, $\nu^* =\frac{\tau}{|\tau|}$ with
 \[
 \tau = \nu_1 +\frac{\varepsilon^2\phi_{\sigma\eta}(x_0,t_0)}{|y_0-x_0|}\nabla_x \phi(x_0,t_0)\mathbf{.}
 \]

 Now by the boundary Harnack comparison theorem we have $\frac{w}{u_2} \sim c$, so $w$ has the asymptotic development $ca_2^+$. Hence for $\vetab$ we have
 \[
 \vetab \geq \bar{a}^+\langle x-x_0, \nu^* \rangle -a^-\langle x-x_0,\nu^* \rangle +o(|x-x_0|)\mathbf{,}\\
 \] where $\bar{a}^+=a^++c\sigma \varepsilon a_2^+$.
Now recall that $G$ is Lipschitz continuous in both variables with Lipschitz constant $L_G$, increasing in the first, decreasing in the second. Moreover in [CS 9.14] it is shown that
 \begin{align*}
 |a_1^\pm -a^\pm| &\leq c(D_t\varepsilon\phi_{\sigma\eta}(x_0,t_0) +|\nabla \varepsilon\phi_{\sigma\eta}(x_0,t_0)|)\\ &\leq c(C\sigma\eta h \varepsilon+C\sigma\eta h \varepsilon+C\omega\varepsilon) \\
 &\leq\bar{c}\sigma h \varepsilon\mathbf{,}
 \end{align*}
the last inequality coming from the construction of the $\phi$. Specifically, we use the fact that $\eta \leq 1$ and $\omega \leq \varepsilon^2$ so the $C\omega\varepsilon$ term can be majorized by the linear term for small $\varepsilon$.  Now $\bar{a}^+ \geq a_1^+-\bar{c}\sigma\varepsilon h+c\sigma\varepsilon a_2^+$ and $a^- \leq a_1^-+\bar{c}\sigma\varepsilon h$. Hence we have
 \begin{align*}
 G(\bar{a}^+,a^-) &\geq G(a_1^+-\bar{c}\sigma\varepsilon h+c\sigma\varepsilon a_2^+,a_1^-+\bar{c}\sigma\varepsilon h)\\
 &\geq G(a_1^+,a_1^-) +L_G[(-\bar{c}\sigma\varepsilon h+ c\sigma \varepsilon a_2^+)-\bar{c}\sigma\varepsilon h]\\
 &= G(a_1^+,a_1^-)+L_G\sigma\varepsilon(-2\bar{c}h+ca_2^+)\\
 &\geq 1+L_G\sigma\varepsilon(-2\bar{c}h+ca_2^+)\mathbf{,}
 \end{align*}
which implies that $G(a^+,a^-) > 1$ provided $h \leq\frac{ca_2^+}{4\bar{c}}$. Our non-degeneracy condition forces $a_2^+\geq c>0$, so taking $h =\frac{ca_2^+}{4\bar{c}}$ we will have $(-2\bar{c}h+ca_2^+)>0$ and thus $G(a^+,a^-) > 1$ as desired.

 We finish the proof by appealing to the Hopf Principle. The difference $u_2-\vetab$ is a positive $\mathcal{L}$-supersolution in $\{\vetab >0\}$ vanishing at the boundary point $(x_0,t_0)$. This implies that $a_2^- \leq a^-$ and by the Hopf Principle we have $a_2^+>\bar{a}^+$. The properties of $G$ then imply that
 \[
 1\geq G(a_2^+,a_2^-)> G(\bar{a}^+,a^-)\mathbf{,}
 \]
 which contradicts $G(\bar{a}^+,a^-)>1$ above.

  Now recalling the properties of the $\varphi_\eta$ above, particularly
 \[
 \varphi_\eta\geq 1+k\eta h \text{ in } B'_{1/2}\times\left(\frac{-T}{2},\frac{T}{2}\right)
 \]
we have for $\eta=1$ $\varphi_\sigma \geq 1+k\sigma h$ and thus
\[
v_{\varepsilon(1+k\sigma h)} (x,t)\leq u_2(x,t)
\]
in the region $B'_{1/2}\times\left(\frac{-T}{2},\frac{T}{2}\right)$.
\end{proof}

\section{Spacial Regularity}

\subsection{Outline of Proof}

In the constant coefficient case regularity follows from applying the interior gain, then the propagation lemma, then rescaling and repeating.

Preventing us from applying this classical argument in our case is the extra $\omega_0 \leq \varepsilon^2$ hypothesis of our propagation lemma. This restricts our choice of $\tau$ for which we can apply the propagation lemma with $u_1 =u(x-\tau)$ and $u_2=u(x)$. The $\tau$ cannot be `too short', since if it is allowed to be arbitrarily short it forces the oscillation $\omega_0=0$. This means that we can carry fully monotonicity using the propagation lemma only in the constant coefficient case. This forces us to use $\varepsilon$-monotonicity in our variable coefficient problem.

The reader will recall that a function $u$ is $\varepsilon_0$-monotone in a unit direction $\tau$ if
\[
u(x)\geq u(x-\varepsilon\tau) \quad \text{for } \varepsilon\geq \varepsilon_0\mathbf{.}
\]
Strict $\varepsilon$-monotonicity, which is of importance in this problem, is similar but quantifies the `gap' between the two points:
\[
u(x)- u(x-\varepsilon\tau)\geq c\varepsilon^\beta u(x) \quad \text{for } \varepsilon\geq \varepsilon_0, \text{ some } \beta>0\mathbf{.}
\]
  Clearly if $u$ is fully monotone in a direction, then it is also $\varepsilon_0$-monotone for any $\varepsilon_0$ we choose.

Finally, it will be convenient to work with an alternate definition of $\varepsilon$-monotonicity, which is essentially equivalent to the one above. We say that $u$ is $\varepsilon$-monotone in the cone of directions $\Gamma(\nu,\theta)$ with defect angle $\delta$ if for any $\tau \in \Gamma(\nu,\theta-\delta)$  with $|\tau| =\varepsilon$ we have
\[
\sup_{B_{\varepsilon\sin \delta}(p)} u(q-\tau) \leq u(p)\mathbf{.}
\]
 In this case, the requirement of the propagation lemma is seen to be $\omega \leq (|\tau|\sin \delta)^2$.

 Our method of proof modifies the classical proof by accommodating this $\varepsilon$-monotonicity. An outline of the steps involved is as follows: Interior gain (given by Corollary~\ref{cor:int_gain}) is propagated to the free boundary by Lemma~\ref{lm:propagation}, but only for $\varepsilon$-monotonicity. The solution is then rescaled and by giving up part of the gain from the first two steps we can assert that the rescaled solution is fully monotone in a smaller cone away from the free boundary (see Lemma~\ref{lm:ep_to_full_mono_space}). This is all that is required to repeat the interior gain argument and at this point we can iterate the result. Special attention must be paid to the effect rescaling has on $\varepsilon$-monotonicity, as well as the amount of cone loss that occurs when in passing from $\varepsilon$-monotonicity to full monotonicity.

\subsection{Spacial Cone Enlargement}

In the lemma below, $r$ and $\lambda$ are constants (less than 1) chosen small enough later. In particular, $\lambda$ will be chosen by the calculation in Corollary~\ref{main_cor}. We take $\varepsilon_k =\lambda^k\varepsilon_0$ and $C_{r^k }= B_{r^k/2}\times(\frac{-r^{2k}T}{2},\frac{r^{2k}T}{2})$. $Q_R$ will be the quadratic cylinder $B_R\times(-R^2,R^2)$.

\begin{lm} \label{space_cone_enlarge}
Let $u$ be a solution to our problem in $B'_2\times (-2,2)$, monotone in the directions $\Gamma^x(e_n,\theta_0)\cup\Gamma^t(\eta,\theta_t)$\textbf{,} with $\eta$ in the span of $e_n$ and $e_t$. Then $u$ is $r^k\varepsilon_k$\textbf{-}monotone in $C_{r^k}$ in an expanded spacial cone of directions $\Gamma^x(\nu_1,\theta^x_1) =\Gamma^x_1$ with  defect angle $\delta_1 \leq c\delta_0$, $c<1$.
\end{lm}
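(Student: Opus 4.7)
The plan is to proceed by induction on $k$, iterating the four-step cycle described in Section 6.1: interior gain, propagation to the free boundary, rescaling, and passage from $\varepsilon$-monotonicity back to full monotonicity. I will describe one round and the scaling bookkeeping carefully; the induction then runs on autopilot modulo constants.

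For the base case $k = 0$, the function $u$ is fully monotone in $\Gamma^x(e_n, \theta_0)$ at unit scale, so Lemmas \ref{lm:2} and \ref{lm:3} apply to produce the sup-convolution gap
\[
u(\pob) - v_{\varepsilon_0}(\pob) \geq \sigma\,\varepsilon_0\, u(\pob), \qquad \sigma = c\delta_0,
\]
provided $\varepsilon_0$ sits below the threshold of Lemma \ref{lm:3}. Lemma \ref{lm:4} spreads this gap over $\Psi$, and the Propagation Lemma \ref{lm:propagation} carries it to a full neighborhood of the free boundary, yielding $\varepsilon_0$-monotonicity of $u$ in $C_1$ in an expanded cone $\Gamma_1^x$ with defect angle $\delta_1 \leq c\delta_0$ and axis rotation $|\nu_1 - e_n| \leq C\delta_0$. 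This is the $k = 0$ conclusion.

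For the inductive step, suppose the conclusion holds at stage $k$, and introduce the parabolic rescaling
\[
u_k(x,t) = \frac{u(r^k x,\, r^{2k} t)}{r^k},
\]
which sends $C_{r^k}$ to $C_1$ and converts the $r^k\varepsilon_k$-monotonicity of $u$ into $\varepsilon_k$-monotonicity of $u_k$. The rescaled operator has coefficients $a^{r^k}_{ij}$ obeying $\|a^{r^k}_{ij} - \delta_{ij}\|_0 \leq A r^{k\alpha}$ by H\"older continuity, so imposing $\lambda > r^{\alpha/2}$ at the outset gives $r^{k\alpha} \leq \varepsilon_k^2$ for every $k$, which is precisely the compatibility $\omega_k \leq \varepsilon_k^2$ demanded by Lemma \ref{lm:propagation}. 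Before re-applying the interior gain, I upgrade $u_k$ from $\varepsilon_k$-monotonicity back to full monotonicity in a slightly reduced cone at a cost of $O(\varepsilon_k^\gamma)$ in defect angle --- the content of the auxiliary $\varepsilon$-to-full monotonicity lemma announced in Section 6.1, a loss that is absorbed by the geometric decay of the $\delta_k$. With full monotonicity restored, the base case argument applies verbatim to $u_k$: Corollary \ref{cor:int_gain} furnishes a further enlarged cone $\Gamma_{k+1}^x$ with $\delta_{k+1} \leq c\,\delta_k$, and Lemmas \ref{lm:3}, \ref{lm:4}, \ref{lm:propagation} upgrade $u_k$ to $\lambda\varepsilon_k = \varepsilon_{k+1}$-monotonicity in $C_r$. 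Undoing the rescaling gives $r^{k+1}\varepsilon_{k+1}$-monotonicity of $u$ in $C_{r^{k+1}}$ in the cone $\Gamma_{k+1}^x$, closing the induction.

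The main obstacle is coordinating two competing geometric rates: the coefficient oscillation at scale $r^k$ decays only like $r^{k\alpha}$, while the monotonicity scale $\varepsilon_k$ decays like $\lambda^k$. Preserving $\omega_k \leq \varepsilon_k^2$ throughout the iteration is what forces $\lambda > r^{\alpha/2}$, a relation that dictates the choice of $\lambda$ and $r$ in Corollary \ref{main_cor}. A secondary bookkeeping point is the small axis rotation $|\nu_{k+1} - \nu_k| \leq C\delta_k$ produced at each stage; because the $\delta_k$ decay geometrically these rotations are summable, yielding a limiting normal direction --- exactly what is needed downstream for the H\"older modulus of continuity claimed in Theorem \ref{thrm:main_thrm}.
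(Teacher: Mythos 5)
You have proved a stronger statement than Lemma~\ref{space_cone_enlarge} actually claims, and in doing so have invoked machinery that is not yet available at this point in the paper. Reread the conclusion: $u$ is $r^k\varepsilon_k$-monotone in $C_{r^k}$ in \emph{the same} expanded cone $\Gamma^x_1$ with defect $\delta_1 \leq c\delta_0$ for every $k$. There is no $\Gamma^x_k$ in the statement, and the remark printed directly after the lemma says so explicitly: ``Increasing the cone opening iteratively will come later.'' Consequently the paper's argument is not an induction. It is a single application of interior gain plus propagation, carried out afresh at each scale $r^k$, each time starting from the \emph{original} full monotonicity cone $\Gamma^x(e_n,\theta_0)$. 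Since $u$ is fully monotone in that cone, so is every rescaling $u_{r^k}$, hence $u_{r^k}$ is trivially $\varepsilon_k$-monotone in it for any choice of $\lambda$. The only thing the increasing index $k$ buys is a smaller coefficient oscillation $\omega_k = c r^{k\alpha}$, which must be matched against $\varepsilon_k^2 = \lambda^{2k}\varepsilon_0^2$ by coupling the choices of $r$ and $\lambda$; the output cone at every scale is the same $\Gamma^x_1$.

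Your inductive step, by contrast, converts $\varepsilon_k$-monotonicity in $\Gamma^x_k$ back to full monotonicity and then re-applies Corollary~\ref{cor:int_gain} to get a further gain $\delta_{k+1}\leq c\delta_k$. That conversion requires \emph{strict} $\varepsilon$-monotonicity and the transfer results of Lemmas~\ref{e-mono_transfer} and~\ref{lm:ep_to_full_mono_space}, none of which exist at the point of Lemma~\ref{space_cone_enlarge}; strict $\varepsilon$-monotonicity is precisely what Corollary~\ref{e_cone_gap} establishes, and it is stated \emph{after} this lemma as a modification of its proof. Using it here would be circular. The genuine iteration with geometric decay $\delta_k\leq \bar c^k\delta_0$ is assembled only in Corollary~\ref{main_cor}, Lemma~\ref{calculation}, and Corollary~\ref{cor: spacial_regularity}, once the supporting $\varepsilon$-to-full machinery is in place. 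Your scaling bookkeeping (the competition between $r^{k\alpha}$ and $\lambda^{2k}$, the constraint forcing the choice of $\lambda$) is correct and matches the paper, but what you have written is structurally a sketch of Corollary~\ref{cor: spacial_regularity}, not a proof of Lemma~\ref{space_cone_enlarge}.
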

 \textbf{Remark:}  Notice that we are asserting improved $\varepsilon$-monotonicity in smaller and smaller regions $C_{r^k}$, in \textit{the same} expanded cone of directions $\Gamma^x_1$. Increasing the cone opening iteratively will come later.

\begin{proof} We rescale $u$:
\[
u_r =\frac{u(rx,r^2t)}{r}\mathbf{,}
\] the rescaling factor $r$ to be fixed later in the proof. The rescaled function will still possess the same spacial monotonicity cone as the original. Additionally, it solves an equation with the rescaled coefficients $a_{ij}(rx,r^2t)$. The oscillation of these coefficients in controlled by $cr^\alpha$, $\alpha$ being the H\"{o}lder exponent. We will assume $r$ is small enough so that Corollary~\ref{cor:int_gain} and the related results from Section 3 can be applied to $u_r$.

Consider now a spacial vector $\tau \in \Gamma^x(e_n,\theta-\delta_0)$, where $\delta_0$ is the defect angle of the space cone, $|\tau| =\varepsilon \ll \delta_0$, $\bar{\varepsilon}=|\tau|\sin\delta_0$. Set $u_1(x,t) =u_r((x,t)-\tau)$. Additionally, assume that the defect angle of the space-time cone is less than that of the space cone.

From the monotonicity cone we have
\[
\sup_{B_{\bar{\varepsilon}(x)}}u_1(y,t) \leq u_r(x,t) \quad \text{in } B_{1}\times(-1,1).
\]
Note that this sup is performed over a space ball. However, we may assume that the same sort of result holds over a space-time ball
\begin{equation}\label{eq:full_ball_sup}
\sup_{B_{\bar{\varepsilon}(x,t)}}u_1(y,s) \leq u_r(x,t) \quad \text{in } B_{1}\times(-1,1)
\end{equation} since the defect angle in space is larger than that in time.

From Corollary~\ref{cor:int_gain} we have that there exists an enlarged cone of spacial directions $\tilde{\Gamma}_x$ in $\Psi$, the neighborhood of $(x_0,0)$. Let $\bar{\tau}$ be a unit (spacial) direction in this expanded cone $\tilde{\Gamma}_x$; then since this enlarged cone contains the old one we can write this direction as $\bar{\tau}=\alpha\tau-\beta e_n$, $\beta\geq 0$, where $\tau$ is a unit vector in the old cone.

Since $\bar{\tau}$ is a direction in which $u$ is increasing  we have $D_{\bar{\tau}} u_r \geq 0$. Using the above, this implies that
\[
D_\tau u_r\geq \frac{\beta}{\alpha}D_n u_r.
\]
Now, if we delete a small neighborhood $\mathcal{N}$ of the contact line $\Gamma\cap\tilde{\Gamma}_x$ between the old and new cones, we can force $\frac{\beta}{\alpha} \geq c\delta_0$, with $c$ depending on the size of the neighborhood $\mathcal{N}$ (see [CS], Section 9.4). We then obtain that for $\tau \in \Gamma_x \setminus \mathcal{N}$ we have
\[
D_\tau  u_r\geq c\delta_0 D_n u_r.
\]

We now demonstrate that a similar inequality holds in this region if we allow the direction to have a small time component of order $\delta_0$. \\

Let $\lambda_1$ and $\lambda_2$ be positive constants such that $\lambda_1^2+\lambda_2^2=1$ and $|\lambda_2|\leq \frac{c\delta_0}{2\bar{c}}$ (here $\lambda_1>1/2)$ where $\bar{c}$ is such that $|D_tu|\leq \bar{c}D_n u_r$ (this inequality is a consequence of the monotonicity cone). We then have
\[
\lambda_1 D_\tau u_r+\lambda_2 D_t u_r \geq (\lambda_1c\delta_0-\frac{c\delta_0}{2})D_n u_r \geq \bar{c}\delta_0 D_n u_r.
\]

Now if $\bar{\tau}=\tau+\bar{\varepsilon}\varrho$, where $\varrho$ can be any $(n+1)-$dimensional unit vector, we have
\[
u_r((x,t)-\bar{\tau})-u_r(x,t) =-D_{\bar{\tau}}u_r(\tilde{x},\tilde{t})|\bar{\tau}| \leq -c\bar{\varepsilon}\delta_0 D_nu_r(\tilde{x},\tilde{t}) \leq -c\bar{\varepsilon}\delta_0 u_r(x_0,0)\mathbf{.}
\]
In the last inequality we have used that $|\bar{\tau}|\geq c\varepsilon\geq c\bar{\varepsilon}$, $D_n u_r \sim \frac{u_r}{d}$  and the Harnack Inequalities. Note that $u_r((x,t)-\bar{\tau}) = u_r((x,t)-\tau -\bar{\varepsilon}\varrho)$ so as $\varrho$ varies we obtain in this region
\[
v_{\bar{\varepsilon}} (x,t) =\sup_{B_{\bar{\varepsilon}} (x,t)} u_1 \leq u_r(x,t) -c\bar{\varepsilon}\delta_0 u_r(x_0,0).
\]

Now by standard arguments, as in Section 4, this gap implies that there exists a small $h$ such that in $\Psi$ (with a different constant $c$)
\[
u_r(x,t) -v_{(1+h\delta)\bar{\varepsilon}}(x,t) \geq c\bar{\varepsilon}\delta_0 u_r(x_0,0).
\]

At this point we must restrict ourselves $\varepsilon$-monotonicity so that the propagation lemma can be applied. Select $\tau$ with $|\tau| = \lambda\varepsilon_0 =\varepsilon_1$ and take $r$ small enough so that $cr^\alpha \leq \varepsilon_1^2$ holds.

Now $cr^\alpha$ is the oscillation of the coefficient matrix $A$. This choice enables us to apply our propagation lemma and obtain that in $B_{1/2}\times (\frac{-T}{2},\frac{T}{2})$ (here $T$ is the `height' of the cylinder $\Psi$)
\[
u_r(x,t) \geq v_{(1+ch\delta_0)\bar{\varepsilon}_1}(x,t).
\]
Therefore $u_r$ is $\varepsilon_1$-monotone in an enlarged cone $\Gamma(\nu_1,\theta_1)$ in the region $B_{1/2}\times(\frac{-T}{2},\frac{T}{2})$ with defect angle $\delta_1=\frac{\pi}{2}-\theta_1 \leq c\delta_0$ with $c<1$. Back-scaling we obtain $r\varepsilon_1$-monotonicity for $u$ in the appropriately rescaled domain $C_r = B_{r/2}\times(\frac{-r^2T}{2},\frac{r^2T}{2})$

Now we can repeat this argument for $\varepsilon_k =\lambda^k\varepsilon_0$ and $r^k$, $\lambda$ to be chosen later. Precisely, $u$ is fully monotone in the original cone, so $u_{r^k}$ will be fully monotone in the original cone as well. Hence, it is $\varepsilon_k$-monotone no matter what we choose $\lambda$ to be. Additionally, parabolic blowups decrease the defect angle of the space-time cone so we have that~\eqref{eq:full_ball_sup} will hold for any $r^k$.

 We can repeat the cone enlargement arguments away from the free boundary to enlarge the spacial monotonicity cone. Finally, we can use the propagation lemma to transfer a portion of this new cone to the free boundary provided that $\omega_k \leq (\varepsilon_k)^2$. Since $\omega_k =cr^{\alpha k} $, we require that at each step $cr^{\alpha k} \leq \lambda^{2k}\varepsilon_0^2$\textbf{,} which can be arranged by coupling the choice of $r$ and $\lambda$.

This proves that in $C_{r^k}$ $u$ is $r^k\varepsilon_k$-monotone in the new, larger, cone of directions $\Gamma^x(\nu_1,\theta_1)$ (we get the same enlarged cone in each case). Alternatively $u_{r^k}$ is $\varepsilon_k$- monotone in $B_{1/2}\times(-\frac{T}{2},\frac{T}{2})$ in this same cone.
\end{proof}

 We now turn to the task of iteratively increasing the monotonicity cone in the above result. To do this we will need  to know that our solutions are fully monotone away from the free boundary since our interior gain results rely on full monotonicity. This in turn requires a strict $\varepsilon$-monotonicity not present in the result above. A slight modification of the proof, however, will yield the desired result. We explicitly observe that the enlarged cone in Corollary \ref{e_cone_gap} below is not the same as in Lemma~\ref{space_cone_enlarge}. 
 
 In the sequel we will let $\gamma, \delta$ be positive constants such that
\[
0<\gamma=\frac{1-\delta}{2}, \quad 0<\beta<\min\left\lbrace\gamma, \frac{\alpha+\delta -1}{2}\right\rbrace
\]
Notice in particular that this choice implies $\alpha+\delta-1>0$ and $\delta<1$. This $\delta$ is not to be confused with the defect angles $\delta_k$. The $M$ in the corollary below is determined by Lemma~\ref{lm:ep_to_full_mono_space} below.
\begin{cor}\label{e_cone_gap}
Let $u$ be a solution to our problem, monotone in the directions $\Gamma^x(e_n,\theta_0)\cup\Gamma^t(\eta,\theta_t)$, with $\eta$ in the span of $e_n$ and $e_t$. Then $u$ is $r^k\varepsilon_k$-monotone in $C_{r^k}$ in an expanded spacial cone of directions $\Gamma^x(\nu_1,\theta^x_1) =\Gamma^x_1$ with  defect angle $\delta_1 \leq c\delta_0$, $c<1$.

Alternatively $u_{r^k}$ is $\varepsilon_k$-monotone in $B_{1/2}\times(-\frac{T}{2},\frac{T}{2})$ in this cone. Furthermore, there exists an M such that\textbf{,} $M\bar{\varepsilon}_k^\gamma$ away from the free boundary, we have strict $\varepsilon_k$-monotonicity in these directions in the following sense:
\begin{equation}\label{eq:strict_e_mono}
u_{r^k}(p) -u_{r^k}(p-\tau) \geq c\sigma\bar{\varepsilon}_k^{1-\gamma}u_{r^k}(p)\mathbf{.}
\end{equation}
\end{cor}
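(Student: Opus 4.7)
The first two assertions are exactly Lemma~\ref{space_cone_enlarge} (in its $C_{r^k}$ form and its rescaled form for $u_{r^k}$), so the only new content is the strict estimate~\eqref{eq:strict_e_mono}. My plan is to extract quantitative strictness from the propagation step of Lemma~\ref{space_cone_enlarge} by pairing the sup-convolution inequality it produces with the directional-derivative lower bound of Lemma~\ref{lm:2} and the comparison $|\nabla u|\sim u/d$ collected in Section~3.

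Fix $k$ and work with $u_{r^k}$ in $B_{1/2}\times(-T/2,T/2)$. The propagation step from the proof of Lemma~\ref{space_cone_enlarge} gives
\[
\sup_{B_{(1+kh\sigma)\bar{\varepsilon}_k}(p)} u_{r^k}(q-\tau) \leq u_{r^k}(p)
\]
for every $\tau\in\Gamma^x_1$ with $|\tau|=\varepsilon_k$ and every $p$ in this region. I would then pick a unit vector $e$ lying in the original cone $\Gamma(e_n,\theta_0)$ and bounded away from its boundary, and take $q=p+(kh\sigma)\bar{\varepsilon}_k e$; this $q$ lies in the sup-convolution ball, so the displayed inequality combined with a one-dimensional mean-value expansion produces
\[
u_{r^k}(p)-u_{r^k}(p-\tau) \geq (kh\sigma)\bar{\varepsilon}_k\, D_e u_{r^k}(\tilde p)
\]
for some intermediate $\tilde p$ within distance $O(\bar{\varepsilon}_k)$ of $p-\tau$.

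Next, the directional-derivative estimate of Lemma~\ref{lm:2}, extended to a parabolic neighborhood of $\tilde p$ by a standard rescaling and covering argument, gives $D_e u_{r^k}(\tilde p)\geq c\delta_0|\nabla u_{r^k}(\tilde p)|$, and the comparison $|\nabla u|\sim u/d$ of Section~3 combined with interior Harnack applied between $\tilde p$ and $p$ upgrades this to
\[
D_e u_{r^k}(\tilde p) \geq c\delta_0\, \frac{u_{r^k}(p)}{d(p,\fb)}.
\]
Using $\bar{\varepsilon}_k\sim \varepsilon_k\delta_0$ and the hypothesis $d(p,\fb)\geq M\bar{\varepsilon}_k^\gamma$ then gives
\[
u_{r^k}(p)-u_{r^k}(p-\tau) \geq \frac{c\sigma}{M}\, \bar{\varepsilon}_k^{1-\gamma}\, u_{r^k}(p),
\]
which is~\eqref{eq:strict_e_mono} once $M$ is taken large enough to absorb constants.

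The hard part will be controlling the Schauder-type error $s^\beta=r^{k\beta}$ inherent in Lemma~\ref{lm:2} so that it stays harmless relative to the cone-angle gain $c\delta_0$ uniformly across the iteration in $k$. I would ensure this by the coupling of $r$ and $\lambda$ already chosen in the proof of Lemma~\ref{space_cone_enlarge} (the condition $cr^{\alpha k}\leq \lambda^{2k}\varepsilon_0^2$) together with the constraint $0<\beta<\gamma=(1-\delta)/2$ imposed in the paper just before the corollary; these choices keep both error terms compatible at every iterate and allow the same $M$ to work at every step.
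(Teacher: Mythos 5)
Your proposal is correct and follows essentially the same strategy as the paper: exploit the slack in the $(1+h\sigma)\bar{\varepsilon}$ sup-convolution radius obtained from the propagation step, and then bound the resulting gap from below via $|\nabla u|\sim u/d$, the Harnack inequality, and the hypothesis $d\geq M\bar{\varepsilon}_k^\gamma$. The only cosmetic differences are that the paper shrinks the sup-convolution radius to $(1+\frac{h}{2}\sigma)\bar{\varepsilon}$ and inserts $\min|\nabla u|$ times the radius difference rather than choosing a single test point $q=p+(kh\sigma)\bar{\varepsilon}_k e$, and that your intermediate bound should read $D_e u\geq c|\nabla u|$ with a universal constant $c$ (not $c\delta_0$, and not invoking Lemma~\ref{lm:2}), since it follows directly from the cone monotonicity when $e$ is taken strictly interior to the original cone; this is consistent with your final estimate once the spurious $\delta_0$ is dropped.
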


\begin{proof}
We pick up the proof of the above lemma at the point where the propagation lemma is applied, slightly changing notation with $\sigma =c\delta_0$ and $p$ and $q$ space-time points, so that
\[
\sup_{B_{(1+h\sigma)\bar{\varepsilon}}(p)}u(q-\tau) \leq u(p).
\]
Reducing the radius of the ball to $(1+\frac{h\sigma}{2})\bar{\varepsilon}$ we have
\[
\sup_{B_{(1+\frac{h}{2}\sigma)\bar{\varepsilon}}(p)}u(q-\tau) +\min_{B_{(1+h\sigma)\bar{\varepsilon}}(p)}|\nabla u||\frac{h\sigma\bar{\varepsilon}}{2}|\leq \sup_{B_{(1+h\sigma)\bar{\varepsilon}}(p)}u(q-\tau).
\]
Now assume that $p$ is located a distance $M\bar{\varepsilon}^\gamma$ away from the free boundary with $M$ a large constant to be fixed later (see Lemma 11). We have $|\nabla u| \sim \frac{u}{d}$, $d$ being the distance to the free boundary, hence the minimum of $|\nabla u|$ can be compared to the minimum of $\frac{u}{d}$.

By the Harnack inequalities $u$ is comparable to $u(p)$, while for the distance we have
\[
M\bar{\varepsilon}^\gamma -(1+h\sigma)\bar{\varepsilon} \leq d \leq M\bar{\varepsilon}^\gamma+(1+h\sigma)\bar{\varepsilon}\mathbf{.}
\]
Since $1+h\sigma$ is bounded, this implies that $d$ is also comparable to $M\bar{\varepsilon}^\gamma$ for $\bar{\varepsilon}$ sufficiently small.

In turn this implies that
\begin{align*}
\sup_{B_{(1+\frac{h}{2}\sigma)\bar{\varepsilon}}(p)}u(q-\tau) &\leq \sup_{B_{(1+h\sigma)\bar{\varepsilon}}(p)}u(q-\tau)-\min_{B_{(1+h\sigma)\bar{\varepsilon}}(p)}|\nabla u||\frac{h\sigma\bar{\varepsilon}}{2} \\
&\leq \sup_{B_{(1+h\sigma)\bar{\varepsilon}}(p)}u(q-\tau)-C\frac{u(p)}{M\bar{\varepsilon}^\gamma}h\sigma\bar{\varepsilon}\\
&\leq \sup_{B_{(1+h\sigma)\bar{\varepsilon}}(p)}u(q-\tau)-Cu(p)\sigma\bar{\varepsilon}^{1-\gamma}\\
&\leq u(p) -Cu(p)\sigma \bar{\varepsilon}^{1-\gamma}.
\end{align*}

Thus\textbf{,} by reducing slightly the cone of monotonicity\textbf{,} strict monotonicity is obtained away from the free boundary.
\end{proof}

\subsection{Results regarding $\varepsilon$-monotonicity}

As mentioned above, we will need to know that our solutions enjoy full monotonicity away from the free boundary. The above corollary is the first step in this process. The remaining results have been collected in this section.\\

Set
\[
Q_{\sqrt{\varepsilon M}}(x^*,t^*) = B'_{\sqrt{\varepsilon M}}(x^*)\times (-M\varepsilon +t^*,M\varepsilon+t^*) \subset \Omega^+(u)\cap\{d_{x,t}>(M\varepsilon)^\gamma\}
\] and let $\zeta$ be the solution to the Dirichlet Problem
\begin{align*}
\zeta_t &=\mathcal{L}_{p^*} \zeta \quad \text{in } Q_{\sqrt{\varepsilon M}}(p^*)\\
\zeta &= u \quad \text{on } \partial_pQ_{\sqrt{\varepsilon M}}(p^*)\\
\end{align*}
where $\mathcal{L}_{p*} =\sum a_{ij}(p^*)D_{ij}u$, a constant coefficient operator.

We state the following result from [FS1] regarding $\zeta$. Recall that $\alpha$ is the H\"{o}lder exponent of our coefficients. The value of $M$ is the lemma below is to be determined later Lemma~\ref{lm:ep_to_full_mono_space}.\\

\begin{lm} $\mathrm{(2.5 \:in \:[FS1])}$
Let $u$ be our caloric function and $\zeta$ as above. Let $\alpha,\gamma,\delta >0$ be as in [Lemma 2.4 in FS1]. Then for every point $p^*$ outside a $(M\varepsilon)^\gamma$-neighborhood of  the free boundary of $u$, and for every point $p \in Q_{\sqrt{\varepsilon M}}$ the following estimates hold:
\begin{align}
|u(p)-\zeta(p)| &\leq C(M\varepsilon)^{\frac{\alpha}{2}+\delta}u(p^*)\\
|D_nu(p) -D_n\zeta(p)| &\leq C(M\varepsilon)^{\frac{\alpha}{2}+\frac{\delta}{2}}D_nu(p^*)\\
|D_tu(p)-D_t\zeta(p)| &\leq C(M\varepsilon)^{\frac{\alpha+\delta-1}{2}}D_nu(p^*).
\end{align}
\end{lm}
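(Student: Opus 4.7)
The strategy is to study $w := u - \zeta$, which vanishes on $\partial_p Q_{\sqrt{\varepsilon M}}(p^*)$ and satisfies the inhomogeneous constant-coefficient equation
\[
w_t - \mathcal{L}_{p^*} w = -\sum_{i,j}\bigl(a_{ij}(p) - a_{ij}(p^*)\bigr) D_{ij} u(p) \equiv f(p).
\]
Because $\gamma = (1-\delta)/2 < 1/2$, the cylinder $Q_{\sqrt{\varepsilon M}}(p^*)$ lies inside $\Omega^+(u)$ at parabolic distance $\sim (M\varepsilon)^{\gamma}$ from the free boundary, so $u$ is a genuine $\mathcal{L}$-caloric function on a parabolic neighborhood of it. The plan is (i) to bound $f$ pointwise by combining H\"older continuity of the coefficients with interior second-derivative estimates for $u$, and (ii) to pass from a bound on $f$ to pointwise bounds on $w$, $D w$ and $D_t w$ by a barrier argument together with standard interior parabolic estimates for the constant-coefficient operator $\mathcal{L}_{p^*}$.

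For the right-hand side $f$: the parabolic distance from any $p \in Q_{\sqrt{\varepsilon M}}(p^*)$ to $p^*$ is at most $C(M\varepsilon)^{1/2}$, so the H\"older hypothesis yields $|a_{ij}(p) - a_{ij}(p^*)| \leq A (M\varepsilon)^{\alpha/2}$. Interior parabolic regularity applied to $u$ on the ball of radius $(M\varepsilon)^{\gamma}/2$ about $p^*$, followed by the Harnack inequality to replace the sup-norm of $u$ by $u(p^*)$, gives $\|D_{ij} u\|_{\infty, Q} \leq C\, u(p^*)\,(M\varepsilon)^{-2\gamma}$. Combining,
\[
\|f\|_\infty \leq C A\, u(p^*)\,(M\varepsilon)^{\alpha/2 - 2\gamma}.
\]

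For the $L^\infty$ bound on $w$, comparison with the linear-in-time barrier $\psi(p) := \|f\|_\infty (t - t^* + M\varepsilon)$ (and its negative), which satisfies $\psi_t - \mathcal{L}_{p^*}\psi = \|f\|_\infty \geq |f|$ and dominates $|w|$ on the parabolic boundary, produces $|w| \leq 2 M \varepsilon \|f\|_\infty$; the identity $1 - 2\gamma = \delta$ then delivers estimate (6.2). For the derivative bounds I would apply classical interior parabolic gradient and time-derivative estimates to $w$ on $Q_{\sqrt{\varepsilon M}/2}(p^*)$, yielding
\[
\|D w\|_\infty \leq C (M\varepsilon)^{-1/2}\|w\|_\infty + C(M\varepsilon)^{1/2}\|f\|_\infty,
\]
and analogously with weights $(M\varepsilon)^{-1}$ and $1$ for $\|D_t w\|_\infty$. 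A direct computation shows that, with $\gamma = (1-\delta)/2$, the two contributions in each estimate have matching exponents in $M\varepsilon$. Rewriting via $D_n u(p^*) \sim u(p^*)/d_{p^*} \sim u(p^*)(M\varepsilon)^{-\gamma} = u(p^*)(M\varepsilon)^{(\delta-1)/2}$ then converts the bounds into the stated $(M\varepsilon)^{\alpha/2+\delta/2}$ and $(M\varepsilon)^{(\alpha+\delta-1)/2}$ forms.

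The only real obstacle is the exponent bookkeeping: the H\"older factor $(M\varepsilon)^{\alpha/2}$, the interior-regularity factor $(M\varepsilon)^{-2\gamma}$, and the parabolic length-scale factors $(M\varepsilon)^1$, $(M\varepsilon)^{1/2}$, $1$ from the barrier and the derivative estimates must align, after the $u(p^*) \to D_n u(p^*)$ conversion, with the exponents appearing in (6.2)--(6.4). The choice $\gamma = (1-\delta)/2$ is precisely what makes this alignment close; everything else is standard interior parabolic theory, so the substantive ingredient one imports from the variable-coefficient setting is just H\"older continuity of the $a_{ij}$.
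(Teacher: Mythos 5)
The paper does not prove this lemma; it is quoted verbatim as Lemma 2.5 of [FS1], so there is no in-paper argument to compare against. Your reconstruction, however, is essentially the standard one that [FS1] uses (freeze coefficients at $p^*$, treat $u-\zeta$ as a solution of the constant-coefficient equation with a small inhomogeneity $f$, estimate $f$ via H\"older continuity of the $a_{ij}$ and interior regularity of $u$ on the scale of the distance to the free boundary, then propagate to $w$ and its derivatives by barrier and interior estimates), and the exponent bookkeeping is correct: $1-2\gamma = \delta$ closes (6.2), the two terms in the gradient estimate both scale as $(M\varepsilon)^{\alpha/2+\delta-1/2}$ giving (6.3) after the $u(p^*)\to D_nu(p^*)\,(M\varepsilon)^{(1-\delta)/2}$ conversion, and likewise for (6.4).

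One step you should tighten is the time-derivative bound. An $L^\infty$ bound on $f$ alone does not yield an $L^\infty$ bound on $D^2 w$ (hence on $D_t w$ through the equation); for that you need a scaled H\"older norm of $f$. This is readily available since $f=\sum(a_{ij}-a_{ij}(p^*))D_{ij}u$ and $D_{ij}u$ has interior H\"older control on the scale $(M\varepsilon)^\gamma$; in fact
\[
(M\varepsilon)^{\alpha/2}[f]_{\alpha,Q_{\sqrt{M\varepsilon}}} \leq C\,u(p^*)\left((M\varepsilon)^{\alpha-\gamma(2+\alpha)} + (M\varepsilon)^{\alpha/2-2\gamma}\right) \leq C\,u(p^*)(M\varepsilon)^{\alpha/2-2\gamma},
\]
because $\alpha-\gamma(2+\alpha) - (\alpha/2-2\gamma) = \alpha(1/2-\gamma) = \alpha\delta/2 > 0$. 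Thus the scaled H\"older contribution is dominated by the same $(M\varepsilon)^{\alpha/2-2\gamma}u(p^*)$ that you already used, and your exponent count for (6.4) is unaffected; but the intermediate estimate should read $\|D_tw\|_\infty \leq C\big(R^{-2}\|w\|_\infty + \|f\|_\infty + R^\alpha[f]_\alpha\big)$ rather than just $R^{-2}\|w\|_\infty + \|f\|_\infty$. With that repair the proposal is a complete proof of the quoted lemma.
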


This next lemma allows us to transfer $\varepsilon$-monotonicity from $u$ to $\zeta$ provided we have strict monotonicity with the correct power.
\begin{lm}\label{e-mono_transfer}
Let $u$ and $\zeta$ be as above and suppose that $u$ is strictly $\varepsilon$-monotone in a direction $\tau$ in the following sense:
\[
u(p)-u(p-\varepsilon\tau)\geq c\varepsilon^{1-\gamma} u(p).
\] Then if $\varepsilon$ is sufficiently small, $\zeta$ is $\varepsilon$ monotone in the direction $\tau$.
\end{lm}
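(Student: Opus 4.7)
The plan is to use the $L^\infty$ estimate between $u$ and $\zeta$ provided by the previous lemma at the two relevant points $p$ and $p-\varepsilon\tau$, and show that the strict monotonicity gap for $u$ dominates the error introduced by replacing $u$ with $\zeta$.

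First, I would write, for any $p\in Q_{\sqrt{\varepsilon M}}$ with $p-\varepsilon\tau$ also in this cylinder,
\[
\zeta(p)-\zeta(p-\varepsilon\tau) = \bigl[u(p)-u(p-\varepsilon\tau)\bigr] + \bigl[\zeta(p)-u(p)\bigr] + \bigl[u(p-\varepsilon\tau)-\zeta(p-\varepsilon\tau)\bigr].
\]
By the hypothesis of strict $\varepsilon$-monotonicity, the first bracket is at least $c\varepsilon^{1-\gamma}u(p)$, while by estimate (6.?) of the cited lemma the remaining two brackets are each bounded in absolute value by $C(M\varepsilon)^{\alpha/2+\delta}u(p^*)$. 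Since $p,p-\varepsilon\tau \in Q_{\sqrt{\varepsilon M}}(p^*)$ and this cylinder sits at distance at least $(M\varepsilon)^\gamma$ from the free boundary, the Interior Harnack Inequality gives $u(p^*)\leq C' u(p)$, so
\[
\zeta(p)-\zeta(p-\varepsilon\tau) \geq \Bigl(c\varepsilon^{1-\gamma}-2CC'(M\varepsilon)^{\alpha/2+\delta}\Bigr)u(p).
\]

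The whole point of the proof is then to check that the exponent of $\varepsilon$ in the error term strictly exceeds that in the main term. Recall $\gamma=(1-\delta)/2$, so $1-\gamma=(1+\delta)/2$, and the condition required is
\[
\tfrac{1+\delta}{2} < \tfrac{\alpha}{2}+\delta \iff \alpha+\delta-1>0,
\]
which is precisely the inequality recorded just before Corollary~\ref{e_cone_gap}. Consequently, absorbing the factor $M^{\alpha/2+\delta}$ into the constant, there exists $\varepsilon_1>0$ (depending on $c,C,C',M,\alpha,\delta$) such that for $\varepsilon\le\varepsilon_1$ the parenthesis above is non\textbf{-}negative, giving $\zeta(p)\geq \zeta(p-\varepsilon\tau)$.

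The only real obstacle is being careful about the set on which the inequality is valid: the estimate for $|u-\zeta|$ is only available inside $Q_{\sqrt{\varepsilon M}}(p^*)$, so one should argue pointwise by centering $Q_{\sqrt{\varepsilon M}}$ at a suitable $p^*$ at distance roughly $(M\varepsilon)^\gamma$ from the free boundary, and note that for $\varepsilon$ small the segment from $p-\varepsilon\tau$ to $p$ stays inside this cylinder (since $\varepsilon \ll \sqrt{\varepsilon M}$). The rest is bookkeeping with Harnack and the elementary exponent inequality above.
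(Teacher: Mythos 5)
Your proposal is correct and follows essentially the same route as the paper: both apply the $L^\infty$ estimate $|u-\zeta|\leq C(M\varepsilon)^{\alpha/2+\delta}u(p^*)$ at $p$ and $p-\varepsilon\tau$, absorb it into the strict-monotonicity gap $c\varepsilon^{1-\gamma}u(p)$, and close by the exponent comparison $\alpha/2+\delta > 1-\gamma \iff \alpha+\delta>1$. The paper simply phrases the decomposition via $u_1,\zeta_1$ and leaves the Harnack comparison $u(p^*)\sim u(p)$ implicit, which you make explicit.
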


\begin{proof}
Define
\[
u_1(p) =u(p)-u(p-\varepsilon\tau), \quad \zeta_1(p) = \zeta(p) -\zeta(p-\varepsilon\tau)
\]
Using the above estimate we have
\begin{align}\label{lemma10}
\zeta_1(p) &\geq u_(p)-C_0(M\varepsilon)^{\delta+\frac{\alpha}{2}}u(p)\\
&\geq c(\varepsilon)^{1-\gamma} u(p) -C_0(M\varepsilon)^{\delta+\frac{\alpha}{2}}u(p)\\
&\geq 0
\end{align} for $\varepsilon$ small enough\textbf{,} provided $1-\gamma < \frac{\alpha}{2}+\delta$. Since
\[
\delta+\frac{\alpha}{2}=\left(\frac{\delta}{2}+\frac{\alpha}{2}\right)+\frac{\delta}{2}>\frac{1}{2}+\frac{\delta}{2}=1-\gamma\mathbf{,}
\]
we have the desired inequality and the proof is complete.
\end{proof}

\textbf{Remark:} The strict $\varepsilon$-monotonicity our solutions will enjoy from the previous section is
\[
u(p)-u(p-\varepsilon\tau)\geq c\sigma\bar{\varepsilon}^{1-\gamma} u(p) \geq c\delta_0^{2-\gamma}\varepsilon^{1-y}u(p)\textbf{,}
\] where $\sigma = c\delta_0$ and in future iterations we will have $\sigma_k=c\delta_k =c\bar{c}^k\delta_0$. We will be interested in applying the above lemma to a solution which is $\varepsilon_k$-monotone, in which case the $\bar{\varepsilon}$ appearing in the above will be $\bar{\varepsilon}_k$. Now for a fixed value of $\sigma$, there exists an $\varepsilon_0$ such that $\zeta_1(p)\geq 0$ as in the proof above. This value of $\varepsilon_0$ will depend on the size of $\sigma$, which could be problematic since $\sigma=c\delta_0$, and the defect angle will go to zero in our iteration.

However, in our iteration we will eventually have $\sigma_k =c\delta_k =c\bar{c}^k\delta_0$ and $\bar{\varepsilon}_k$. By choosing $\bar{c}$ close to 1, we can ensure that the calculation~\eqref{lemma10} remains valid when applied with $\delta_k$ and $\varepsilon_k$ since the $C_0(M\varepsilon)^{\delta+\frac{\alpha}{2}}u(p)$ term will also be decreasing.

We have then
\begin{lm}\label{lm:ep_to_full_mono_space}
Let $u$ be as in Corollary~\ref{e_cone_gap}.
 Then, if $\varepsilon_k$ is small enough and $M$ is large enough, $(M\bar{\varepsilon}_k)^\gamma$ away from the free boundary, $u_{r^k}$ is fully monotone in the cone $\Gamma^x_1(\nu, \theta_1-c_0\varepsilon_k^\frac{\alpha+\delta}{2})$, $c_0 >1$.
\end{lm}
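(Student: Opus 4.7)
The plan is to approximate $u_{r^k}$ on a small cylinder by a constant-coefficient caloric function $\zeta$, upgrade the strict $\varepsilon$-monotonicity of $u_{r^k}$ to pointwise derivative information for $\zeta$ via parabolic regularity, and then transfer this derivative information back to $u_{r^k}$; the cone reduction $c_0\varepsilon_k^{(\alpha+\delta)/2}$ will appear precisely as the amount needed to absorb the coefficient approximation error by the near-axis derivative $D_\nu u_{r^k}$.

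Fix a point $p^*$ at distance at least $(M\bar\varepsilon_k)^\gamma$ from the free boundary and let $\zeta$ be the frozen-coefficient approximation of Lemma~2.5 on the cylinder $Q_{\sqrt{\varepsilon_k M}}(p^*)$. Combining the strict gap
\[
u_{r^k}(p)-u_{r^k}(p-\tau)\ \ge\ c\sigma\bar{\varepsilon}_k^{1-\gamma}u_{r^k}(p^*)
\]
from Corollary~\ref{e_cone_gap} with $|u-\zeta|\le C(M\varepsilon_k)^{\alpha/2+\delta}u(p^*)$, and using the power relation $\alpha/2+\delta>1-\gamma$, gives by exactly the argument in the proof of Lemma~\ref{e-mono_transfer} the same strict gap for $\zeta$. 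The mean value theorem on the segment from $p-\tau$ to $p$ then produces an intermediate $p'$ with $D_{\hat\tau}\zeta(p')\ge c'\sigma\bar{\varepsilon}_k^{-\gamma}u(p^*)$. Since $\zeta$ is caloric for a \emph{constant coefficient} operator on $Q_{\sqrt{\varepsilon_k M}}(p^*)$, interior estimates give $\|D^2\zeta\|_\infty\le Cu(p^*)/(M\varepsilon_k)$, so sliding from $p'$ to $p$ (distance at most $\varepsilon_k$) costs only $Cu(p^*)/M$; for $M$ sufficiently large this cost is beaten by the mean-value gap and one obtains
\[
D_{\hat\tau}\zeta(p)\ \ge\ \tfrac12 c'\sigma\bar{\varepsilon}_k^{-\gamma}u(p^*)
\]
on a slightly smaller sub-cylinder, for every unit $\hat\tau$ in the strict monotonicity cone.

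To transfer this back to $u_{r^k}$, I would invoke the analogue of the $D_n$-estimate of Lemma~2.5 for a general spatial direction (obtained from Schauder estimates applied to $u-\zeta$, whose forcing term is $\sum(a_{ij}^s-a_{ij}^s(p^*))D_{ij}u$) together with $D_nu(p^*)\sim u(p^*)(M\varepsilon_k)^{-\gamma}$, yielding
\[
|D_{\hat\tau}u_{r^k}(p)-D_{\hat\tau}\zeta(p)|\ \le\ C(M\varepsilon_k)^{(\alpha+2\delta-1)/2}u(p^*).
\]
The identity $\gamma+(\alpha+2\delta-1)/2=(\alpha+\delta)/2$ then matches this error exactly to the cone-reduction scale: for any unit $\hat\xi\in\Gamma^x_1(\nu,\theta_1-c_0\varepsilon_k^{(\alpha+\delta)/2})$, I would decompose $\hat\xi=\lambda\nu+\mu\hat\tau$ in the two-plane spanned by $\nu$ and $\hat\xi$, with $\hat\tau$ on the boundary of $\Gamma^x_1$ and $\lambda\ge cc_0\varepsilon_k^{(\alpha+\delta)/2}$; combining the non-degeneracy $D_\nu u_{r^k}\gtrsim u(p^*)(M\varepsilon_k)^{-\gamma}$ with the possibly near-zero lower bound on $D_{\hat\tau}u_{r^k}$ yields $D_{\hat\xi}u_{r^k}\ge 0$ once $c_0$ is taken large enough (depending on $M$, $c_\ast$ and the transfer constant).

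The main obstacle is the delicate bookkeeping required to keep three competing inequalities in sync: (i)~the $L^\infty$ approximation error must be beaten by the strict $\varepsilon$-monotonicity gap when passing from $u_{r^k}$ to $\zeta$; (ii)~the $C^2$ slide-over cost must be beaten by the mean-value lower bound, fixing how large $M$ must be; (iii)~the derivative transfer error, which scales like $\varepsilon_k^{(\alpha+2\delta-1)/2}$, must be absorbed by the near-axis contribution $\lambda D_\nu u_{r^k}\sim c_0\varepsilon_k^{(\alpha+2\delta-1)/2}$, fixing $c_0$ (and the precise power $(\alpha+\delta)/2$ in the cone reduction). The exponent identities $\gamma=(1-\delta)/2$, $\alpha/2+\delta>1-\gamma$ and $\alpha+\delta>1$ arranged at the top of Section~6.2 are precisely what makes all three compatible simultaneously.
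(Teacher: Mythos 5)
Your plan follows the paper's overall architecture -- freeze coefficients to obtain $\zeta$, transfer $\varepsilon$-monotonicity across via the $L^\infty$ estimate of Lemma~2.5 (this is exactly the mechanism of Lemma~\ref{e-mono_transfer}), upgrade $\zeta$ to full monotonicity, and transfer derivative information back -- and the exponent bookkeeping you set out ($\gamma + (\alpha+2\delta-1)/2 = (\alpha+\delta)/2$, and $D_nu(p^*)\sim u(p^*)(M\varepsilon_k)^{-\gamma}$) matches the paper's. The one genuine divergence is in the middle step: the paper simply cites Lemma~13.23 of [CS], which is the off-the-shelf result converting $\varepsilon$-monotonicity of a constant-coefficient caloric function into full monotonicity in a cone reduced by $O(\varepsilon)$, whereas you reconstruct that argument from scratch via the mean value theorem, Harnack, and interior $C^2$ estimates on the small cylinder $Q_{\sqrt{\varepsilon_k M}}$. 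This is a defensible alternative, and has the minor advantage of keeping the argument self-contained; the cost is that you then have to carry the strict gap (not merely $\zeta_1\ge 0$) through the $u\to\zeta$ transfer and manage the $\delta_k$-dependence yourself, which the citation hides. One small algebraic slip: the mean-value lower bound should be $D_{\hat\tau}\zeta(p')\gtrsim \sigma\,\bar\varepsilon_k^{1-\gamma}/\varepsilon_k\,u(p^*) = \sigma(\sin\delta_1)^{1-\gamma}\varepsilon_k^{-\gamma}u(p^*)$, not $\sigma\bar\varepsilon_k^{-\gamma}u(p^*)$ -- the discrepancy is a factor of $\sin\delta_1$, immaterial for the $\varepsilon_k$-power count but worth keeping straight since $\delta_k\to 0$ in the iteration and the remark after Lemma~\ref{e-mono_transfer} relies on precisely this bookkeeping.

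Two things to add to make the sketch watertight. First, you should note (as the paper does, and as the remark after the lemma in the paper makes explicit) that passing from the $D_n$-comparison of Lemma~2.5 to a general-direction comparison requires knowing $|\nabla\zeta|$ and $|D_t\zeta|$ are both controlled by $D_n\zeta$, which in turn hinges on the monotonicity cone of $\zeta$; your Schauder route for $u-\zeta$ is also fine, but either way this control is needed. Second, the cone you ultimately conclude for $\zeta$ should itself be reduced by $O(\varepsilon_k)$ (the slide cost does not vanish, it only becomes subordinate), and it is precisely because $\varepsilon_k = o(\varepsilon_k^{(\alpha+\delta)/2})$ that both this loss and the derivative-transfer loss can be absorbed together into the single term $c_0\varepsilon_k^{(\alpha+\delta)/2}$; the paper states this explicitly ("we may assume $\varepsilon_k$ is small enough that we can majorize the loss term"), and your sketch should too.
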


\begin{proof}From Corollary~\ref{e_cone_gap}
\[
u_{r^k}(p) -u_{r^k}(p-\tau) \geq c\sigma\bar{\varepsilon}_k^{1-\gamma}u_{r^k}(p)\mathbf{,}
\] for $\tau \in \Gamma^x_1(\nu,\theta_1),|\tau|=\varepsilon_k$, $M\bar{\varepsilon}_k^\gamma$ from the free boundary.

Applying Lemma~\ref{e-mono_transfer} we conclude that $\zeta$ is also $\varepsilon_k$-monotone in the cone $\Gamma^x_1$ away from the free boundary. Now $\zeta$ solves a constant coefficient parabolic equation which we may assume is the heat equation. From the proof of Lemma 13.23 in [CS] we conclude that $\zeta$ is fully monotone in the cone of directions $\Gamma^x_1(\nu,\theta_1-c\varepsilon)$.

Note that this cone of monotonicity implies $|\nabla \zeta|$ is controlled (in this region) by $D_n\zeta$. This in turn implies our estimate for $|D_nu-D_n\zeta|$ extends to an estimate of $|D_eu-D_e\zeta|$, where $e$ is any spacial vector.

Using this, we have for any direction $e \in \Gamma^x_1(\nu,\theta_1-c\varepsilon)$
\[
D_e u(p^*)\geq D_e\zeta(p^*) -c(M\varepsilon_k)^{\frac{\alpha+\delta}{2}}D_nu(p^*)\mathbf{.}
\]
It follows then that $u$ is fully monotone in the direction $\bar{e}=e+c(M\varepsilon_k)^{\frac{\alpha+\delta}{2}}\nu$.

Hence $u$ is fully monotone in the cone $\Gamma^x_1(\nu,\theta-c\varepsilon_k-c(M\varepsilon_k)^{\frac{\alpha+\delta}{2}})$. Now at this point in the proof the size of $M$ has already been determined by invoking Lemma 13.23 in [CS]. We may assume $\varepsilon_k$ is small enough that we can majorize the loss term and have full monotonicity in $\Gamma^x_1(\nu,\theta_1-c_0\varepsilon^{\frac{\alpha+\delta}{2}})$, with $c_0>1$.
\end{proof}

\textbf{Remark} Lemma 13.23 in [CS] is stated in a slightly different context from what we have here. Its proof requires a control $|\zeta_t|\leq c|\nabla \zeta|$ which holds since such an estimate holds for $u$ and since we have the estimates between the derivatives of $u$ and $\zeta$. More precisely, we have 

\begin{align*}
\vert D_t\zeta\vert  &\leq |D_t\zeta-D_tu|+|D_t\zeta| \\
&\leq c(M\varepsilon)^{\frac{\alpha+\delta-1}{2}}D_nu(p^*).\\
\end{align*}
On the other hand, we have
\begin{align*}
|D_nu(p^*)| &\leq |D_nu(p^*)-D_n\zeta|+|D_n\zeta|\\
&\leq c(M\varepsilon)^\frac{\alpha+\delta}{2}D_nu(p^*)+|D_n\zeta|.\\
\end{align*}
Or,
\[
(1-c(M\varepsilon)^\frac{\alpha+\delta}{2})D_nu(p^*) \leq |D_n\zeta|.
\]
So if we take $\varepsilon$ small enough, we have control of $D_nu(p^*)$ by $D_n\zeta$. Here we have not specified an argument for $D_n\zeta$ since the above estimate holds for any point in the neighborhood.

Taken together, these imply the control of $D_t\zeta$ by $D_n\zeta$ needed in the proof of Lemma 13.23 in [CS] (naturally control by $D_n$ implies control by the full gradient). \\

Lastly, we quote Lemma 2.4 from [FS1] for the space-time cone.
\begin{lm}\label{e-mono-time}(Lemma 2.4 in [FS1]).\\
Let $\alpha\leq 1$ be the H\"{o}lder exponent of the $a_{ij}$ and $\beta,\delta,\gamma$ as indicated above.
\indent Suppose $u\geq 0$ is monotone in the $e_n$ direction and
\[
u(p)-u(p-\varepsilon\tau) \geq c\varepsilon^{1-\gamma+\beta}u(p) =c\varepsilon^{\frac{\alpha+\delta}{2}-1}u(p)
\]
for $d_p <\eta/4$ [this is the distance to the free boundary] where $\tau =\beta_1e_n+\beta_2e_t$ with $\beta_1>0$ $|\beta_2 \neq 0$ and $\beta_1^2+\beta_2^2 =1$. Then if $M = M(n,L)$ is large enough and $\varepsilon$ is small enough, outside a $(M\varepsilon)^\gamma$-neighborhood of the free boundary we have
\[
D_{t_\varepsilon} \geq 0
\]
where $t_\varepsilon =\tau+ c(M\varepsilon)^{(\alpha+\delta-1)/2}e_n$.
\end{lm}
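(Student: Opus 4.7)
My plan is to mirror the strategy used for the spatial cone in Lemma~\ref{lm:ep_to_full_mono_space}, with the key new feature being that the estimate controlling $|D_t u - D_t\zeta|$ is a factor $(M\varepsilon)^{-1/2}$ worse than the spatial derivative estimate, and this is precisely what forces the correction term $c(M\varepsilon)^{(\alpha+\delta-1)/2}e_n$ in the definition of $t_\varepsilon$. The whole argument is really a calibration: the strict monotonicity exponent $1-\gamma+\beta$ is chosen to survive the $L^\infty$ transfer to $\zeta$, and the $e_n$-correction is chosen to absorb the time-derivative transfer error on the way back.

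First I would fix $p^*$ outside an $(M\varepsilon)^\gamma$-neighborhood of the free boundary and introduce the frozen-coefficient solution $\zeta$ on $Q_{\sqrt{\varepsilon M}}(p^*)$ as in the statement immediately above Lemma~\ref{e-mono_transfer}. The hypothesis $u(p)-u(p-\varepsilon\tau)\geq c\varepsilon^{1-\gamma+\beta}u(p)$ combined with the $L^\infty$ estimate $|u-\zeta|\leq C(M\varepsilon)^{\alpha/2+\delta}u(p^*)$ from Lemma (2.5) gives
\[
\zeta(p)-\zeta(p-\varepsilon\tau)\geq c\varepsilon^{1-\gamma+\beta}u(p)-2C(M\varepsilon)^{\alpha/2+\delta}u(p^*).
\]
Since $\gamma=(1-\delta)/2$ and $\beta<(\alpha+\delta-1)/2$ gives $1-\gamma+\beta<\alpha/2+\delta$, and since $u(p)$ and $u(p^*)$ are comparable by Harnack on $Q_{\sqrt{\varepsilon M}}(p^*)$, the positive main term dominates for $\varepsilon$ small, so $\zeta$ is $\varepsilon$-monotone in the direction $\tau$ in this cylinder. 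This is exactly the time-direction analogue of Lemma~\ref{e-mono_transfer}.

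Next, since $\zeta$ is caloric for a constant coefficient parabolic operator (which after an affine change of coordinates we may take to be $\partial_t-\Delta$), the difference quotient $(\zeta(p)-\zeta(p-\varepsilon\tau))/\varepsilon$ is a non-negative caloric function, so applying the argument of Lemma 13.23 in [CS] (or, more simply, a Harnack/iteration argument on this nonnegative caloric difference) yields $D_\tau\zeta(p^*)\geq 0$. Writing $\tau=\beta_1e_n+\beta_2e_t$ and using the derivative estimates from Lemma (2.5),
\[
|D_\tau u(p^*)-D_\tau\zeta(p^*)|\leq |\beta_1|\,C(M\varepsilon)^{(\alpha+\delta)/2}D_nu(p^*)+|\beta_2|\,C(M\varepsilon)^{(\alpha+\delta-1)/2}D_nu(p^*),
\]
and for $\varepsilon$ small the second term (the weaker time estimate) dominates, so $D_\tau u(p^*)\geq -C(M\varepsilon)^{(\alpha+\delta-1)/2}D_nu(p^*)$. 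Finally, for $t_\varepsilon=\tau+c(M\varepsilon)^{(\alpha+\delta-1)/2}e_n$ we have
\[
D_{t_\varepsilon}u(p^*)=D_\tau u(p^*)+c(M\varepsilon)^{(\alpha+\delta-1)/2}D_nu(p^*)\geq (c-C)(M\varepsilon)^{(\alpha+\delta-1)/2}D_nu(p^*)\geq 0,
\]
provided $c$ is chosen larger than the constant $C$ above; this is legitimate because $D_nu(p^*)\geq 0$ by the assumed $e_n$-monotonicity and in fact $D_nu(p^*)\sim u(p^*)/d_{p^*}>0$ away from the free boundary.

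The main obstacle is really bookkeeping: one must verify that the exponent $1-\gamma+\beta$ in the hypothesis is exactly calibrated so that the error in the $u\to\zeta$ transfer is absorbed, and that the weaker time-derivative estimate in Lemma (2.5) is precisely compensated by the size $(M\varepsilon)^{(\alpha+\delta-1)/2}$ of the $e_n$-correction in $t_\varepsilon$. Neither step has any hidden analytic content beyond what is already in Lemmas~\ref{e-mono_transfer} and~\ref{lm:ep_to_full_mono_space}, but the algebra of the exponents, and the need to appeal to the Harnack inequality at $p^*$ so that the constants depend only on $n$ and $L$, is where the care is required.
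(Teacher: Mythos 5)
The paper does not actually prove this statement: it is quoted verbatim as Lemma~2.4 from [FS1], so there is no ``paper's own proof'' against which to compare your attempt. What you have written is a reasonable reconstruction modelled on the proof of Lemma~\ref{lm:ep_to_full_mono_space}, and most of the bookkeeping checks out: the inequality $1-\gamma+\beta<\alpha/2+\delta$ does follow from $\gamma=(1-\delta)/2$ together with $\beta<(\alpha+\delta-1)/2$, so the $L^\infty$ transfer error is indeed subordinate to the strict $\varepsilon$-monotonicity gap for $\varepsilon$ small; and the final calibration, where the $(M\varepsilon)^{(\alpha+\delta-1)/2}$ time-derivative error is absorbed by the $e_n$-correction in $t_\varepsilon$, is exactly the right mechanism, given that $D_n u(p^*)>0$ away from the free boundary.

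The one place you are glossing over real content is the middle step, ``applying the argument of Lemma 13.23 in [CS] (or, more simply, a Harnack/iteration argument on this nonnegative caloric difference) yields $D_\tau\zeta(p^*)\geq 0$.'' The passage from (even strict) $\varepsilon$-monotonicity of $\zeta$ to full monotonicity $D_\tau\zeta\geq 0$ is the analytic heart of the lemma, and it is not simply a consequence of the difference quotient being a nonnegative caloric function: you need an iteration that improves $\varepsilon$-monotonicity to $\varepsilon/2$-monotonicity using the quantitative lower bound and Harnack, which is what [CS 13.23] supplies. Moreover, [CS 13.23] is stated for spatial directions, and adapting it to a direction with a nonzero time component ($\beta_2\neq 0$) requires establishing that $|D_t\zeta|$ is controlled by $D_n\zeta$ -- this is precisely the issue the paper flags in the remark following Lemma~\ref{lm:ep_to_full_mono_space}, and your proof should do the same. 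You should either reproduce that control (as the paper does for the spatial lemma) or explicitly cite it as the point where the hypotheses of [CS 13.23] are verified; as written, the step reads as if it were automatic when it is not.

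Finally, a minor presentational point: the constant $c$ appearing in $t_\varepsilon=\tau+c(M\varepsilon)^{(\alpha+\delta-1)/2}e_n$ is part of the conclusion, so stating ``provided $c$ is chosen larger than the constant $C$ above'' is fine, but you should note explicitly that this is a choice of the universal constant in the lemma, not a free parameter introduced mid-proof; otherwise the reader may wonder whether it can be made.
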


\subsection{Regularity of the Free Boundary in Space}
 By combining the results from our cone enlargement lemma and the $\varepsilon$-monotonicity section we have the following corollary suitable for iteration.

\begin{cor}\label{main_cor}
Let $u$ be a solution to our problem, monotone in the directions $\Gamma^x(e_n,\theta_0)\cup\Gamma^t(\eta,\theta_t)$ with $\eta$ in the span of $e_n$ and $e_t$. Then $u$ is $r^k\varepsilon_k$-monotone in $Q_{r^k}$ in an expanded spacial cone of directions $\Gamma^x(\nu_1,\theta^x_1) =\Gamma^x_1$ with  defect angle $\delta_1 \leq c\delta_0$, $c<1$.

Alternatively $u_{r^k}$ is $\varepsilon_k$-monotone in $Q_1$. Furthermore, there exists an M such that\textbf{,} $M\bar{\varepsilon}_k^\gamma$ away from the free boundary, we have have strict $\varepsilon_k$-monotonicity in these directions in the following sense:
\begin{equation}\label{eq:strict_e_mono}
u_{r^k}(p) -u_{r^k}(p-\tau) \geq c\sigma\bar{\varepsilon}_k^{1-\gamma}u_{r^k}(p).
\end{equation} Finally, in this region, at a distance greater than $M\bar{\varepsilon}_k^\gamma$ from the free boundary, $u_{r^k}$ is fully monotone in a cone of directions $\bar{\Gamma}_1^x(\nu,\bar{\theta}_1)$ with $\bar{\delta}_1\leq \bar{c}\delta_0\mathbf{.}$
\end{cor}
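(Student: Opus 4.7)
The plan is to recognize that Corollary \ref{main_cor} is a packaging of three results already established --- Lemma \ref{space_cone_enlarge}, Corollary \ref{e_cone_gap}, and Lemma \ref{lm:ep_to_full_mono_space} --- tuned so that its output feeds cleanly into the iteration in the next subsection. The only content beyond invoking these is checking that the cone loss incurred when passing from $\varepsilon_k$-monotonicity to full monotonicity does not eat up the gain produced by the interior enlargement.

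First I would appeal to Lemma \ref{space_cone_enlarge} applied to $u_{r^k}$, which yields $\varepsilon_k$-monotonicity in an enlarged spatial cone $\Gamma^x_1(\nu_1,\theta^x_1)$ whose defect angle satisfies $\delta_1 \leq c\delta_0$ with a fixed $c<1$, valid on a cylinder of the form $B_{1/2}\times(-T/2,T/2)$. Since $Q_1$ and the $C_{r^k}$ of the previous section differ only in the aspect ratio of their space and time radii, the difference is absorbed either by shrinking $r$ at the outset or by a trivial rescaling, giving the first two assertions of the corollary together with their scale-invariant rephrasing for $u_{r^k}$ on $Q_1$.

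Second, the strict $\varepsilon_k$-monotonicity \eqref{eq:strict_e_mono} at points $p$ with $d_p \geq M\bar{\varepsilon}_k^\gamma$ is precisely the conclusion of Corollary \ref{e_cone_gap}: the argument there tightens the sup-convolution radius in the propagation step from $(1+h\sigma)\bar{\varepsilon}_k$ down to $(1+\tfrac{h\sigma}{2})\bar{\varepsilon}_k$ and extracts the gap by combining $|\nabla u|\sim u/d$ with the Harnack inequality. No modification is required.

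Finally, full monotonicity away from the free boundary comes from Lemma \ref{lm:ep_to_full_mono_space}: transfer the strict $\varepsilon_k$-monotonicity to the frozen-coefficient solution $\zeta$ via Lemma \ref{e-mono_transfer}, apply the constant-coefficient argument (Lemma 13.23 in [CS]) to upgrade $\zeta$ to fully monotone in a cone narrower by $c\varepsilon_k$, and transfer back to $u_{r^k}$ using the pointwise derivative comparison $|D_e u - D_e \zeta|\leq C(M\varepsilon_k)^{(\alpha+\delta)/2}D_nu(p^*)$ quoted from [FS1]. This produces a fully monotone cone with defect angle at most
\[
\bar{\delta}_1 \;\leq\; c\delta_0 \;+\; c_0\varepsilon_k^{(\alpha+\delta)/2}.
\]
The only real obstacle is verifying that this remains bounded by $\bar{c}\delta_0$ for some $\bar{c}<1$ uniformly in $k$. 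I would settle this by coupling $\lambda$ and $\varepsilon_0$ to the gain constant $c$: setting $\bar{c}=(1+c)/2$, the required inequality $c_0\varepsilon_k^{(\alpha+\delta)/2}\leq \tfrac{1-c}{2}\delta_0$ is automatic once $\varepsilon_0$ is chosen small enough at the outset and stays so because $\varepsilon_k=\lambda^k\varepsilon_0$ only decreases. This small bookkeeping step, rather than any new analytic input, is what makes the corollary ready for iteration.
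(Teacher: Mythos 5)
Your decomposition is the same as the paper's: the first parts restate Corollary~\ref{e_cone_gap}, full monotonicity is Lemma~\ref{lm:ep_to_full_mono_space}, and the only genuine content is the defect-angle bookkeeping $\bar{\delta}_1 \le c\delta_0 + c_0\varepsilon_1^B$, $B=\tfrac{\alpha+\delta}{2}$ (the worst case is $\varepsilon_1$, since $\varepsilon_k=\lambda^k\varepsilon_0$ only decreases, as you note). As a proof of the corollary literally stated this is correct.

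The deviation worth flagging is your choice $\bar{c}=(1+c)/2$, i.e.\ $c' := \bar{c}-c = (1-c)/2$. The paper deliberately leaves $c'$ free, imposing $c+c'=\bar{c}<1$ together with $c>2c'$, and absorbs the constraint into the choice of $\lambda$ via $\lambda^B \le c'/c_0$ (using the standing assumption $\varepsilon_0\ll\delta_0$, which together with $B>\tfrac12$ gives $\varepsilon_0^B\le\delta_0$). This extra care is not cosmetic: the subsequent Lemma~\ref{calculation}, which promotes $\bar{\delta}_1\le\bar{c}\delta_0$ to $\bar{\delta}_k\le\bar{c}^k\delta_0$, needs $c'<c$ in its binomial comparison. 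Your $c'=(1-c)/2$ satisfies $c'<c$ only when $c>\tfrac13$, and the gain constant $c$ from the cone-enlargement argument is not guaranteed to be that large. Since, by your own account, the whole point of the corollary is to be ready for iteration, you should leave $c'$ small relative to $c$ rather than pinning $\bar{c}$ at the midpoint; this costs nothing and keeps Lemma~\ref{calculation} available. Shrinking $\varepsilon_0$ versus adjusting $\lambda$ is a matter of taste, but keeping $\varepsilon_0$ fixed and tuning $\lambda$ (as the paper does) avoids disturbing the already-coupled constraint $cr^\alpha\le\varepsilon_1^2$ from Lemma~\ref{space_cone_enlarge}.
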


\begin{proof} This is Corollary~\ref{e_cone_gap} except for the last part about full monotonicity.

 By Corollary~\ref{e_cone_gap} we have $u_{r^k}$ $\varepsilon_k$-monotone in $Q_1$ in the cone of directions $\Gamma^x(\nu_1,\theta_1)$, with~\eqref{eq:strict_e_mono} holding for directions in this cone. From Lemma~\ref{lm:ep_to_full_mono_space} $u_{r^k}$ is therefore fully monotone in the cone
 \[
 \Gamma^x(\nu_1,\theta-c_0\varepsilon_k^{\frac{\alpha+\delta}{2}}) :=\bar{\Gamma}_1^x.
 \] For notational convenience we will write $B$ for the power $\frac{\alpha+\delta}{2}$. In terms of the spacial defect angles, we know that $\delta_1 \leq c\delta_0$ with $c<1$. Let $\bar{\delta}_1$ denote the defect angle of the cone $\bar{\Gamma}_1^x$. It is readily seen that the worst case scenario occurs with $\varepsilon_1$. In this case we have
\[
\bar{\delta}_1 =\delta_1+c_0\varepsilon_1^B \leq c\delta_0+c_0\varepsilon_1^B.
\] We desire to preserve the geometric decay of the defect angles so we want $\bar{\delta}_1 \leq \bar{c}\delta_0$ with $\bar{c}<1$. So what we must prove is that there is an appropriate choice of $\lambda$ in the definition of $\varepsilon_k =\lambda^k\varepsilon_0$ that makes this possible.

Now $\varepsilon_1 =\lambda\varepsilon_0$\textbf{,} so we need
\[
c_0\lambda^B\varepsilon_0^B \leq c'\delta_0\mathbf{,}
\]
 where $c'$ is chosen so that $c + c' =\bar{c}<1$ and $c>2c'$. Now our starting assumption is that $\varepsilon_0 \ll \delta_0$ (and thus we can also assume $\varepsilon_0^B \leq \delta_0$; note that $B>1/2$) so it suffices to chose $\lambda$ so that
\[
\lambda^B \leq \frac{c'}{c_0}\mathbf{.}
\]
This would suffice to give $\bar{\delta}_1\leq \bar{c}\delta_0$.

\end{proof}

The calculation in the above proof will be of interest to us when we iterate. In particular, we need to ensure that the choice of $\lambda$ made in Corollary~\ref{main_cor} will also work in the iteration, where we need $\delta_k =\bar{c}^k\delta_0$. We take care of this now with the following result about cones.

\begin{lm}\label{calculation}
$\Gamma_k =\Gamma(\nu_k,\theta_k)$ is a sequence of cones, $\Gamma_k \subset \Gamma_{k+1}$ with defect angle $\delta_k \leq c^k\delta_0$. Let $\bar{\Gamma}_k =\Gamma_k(\nu_k,\theta_k -c_0\varepsilon_k^B)$, $B=\frac{\alpha+\delta}{2}$, with $\varepsilon_k =\lambda^k\varepsilon_0$, and defect angle $\bar{\delta}_k$. Then there exists a $\bar{c}<1$ such that $\bar{\delta}_k\leq \bar{c}^k\delta_0$.
\end{lm}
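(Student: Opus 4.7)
The proof will reduce to a direct geometric estimate. The first step is to unfold the definitions: since $\bar\Gamma_k$ is obtained from $\Gamma_k$ by shrinking the opening angle by $c_0\varepsilon_k^B$, the defect angles are related by $\bar\delta_k = \delta_k + c_0\varepsilon_k^B$, so the two hypothesized decay rates yield
$$\bar\delta_k \;\le\; c^k\delta_0 + c_0\lambda^{kB}\varepsilon_0^B.$$
The whole game is to verify that one may choose $\bar c<1$ and $\lambda$ so that the second (error) term gets absorbed into the first at rate $\bar c^k$.

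The plan is to pick $\bar c$ with $c<\bar c<1$ as the target rate, and then select $\lambda$ small enough that
$$\lambda^B \;\le\; \min\!\left(\frac{\bar c-c}{c_0},\;\bar c\right),$$
using the standing smallness assumption $\varepsilon_0^B\le\delta_0$ already invoked in the proof of Corollary~\ref{main_cor}. The second bound on $\lambda^B$ gives $\lambda^{kB}\le\lambda^B\,\bar c^{\,k-1}$, and inserting the first bound then yields $c_0\lambda^{kB}\varepsilon_0^B\le(\bar c-c)\bar c^{\,k-1}\delta_0$. Combined with the elementary inequality $\bar c^k-c^k\ge(\bar c-c)\bar c^{\,k-1}$ (which follows from $(c/\bar c)^k\le c/\bar c$ for $k\ge 1$) we conclude
$$\bar\delta_k \;\le\; c^k\delta_0 + (\bar c^k-c^k)\delta_0 \;=\; \bar c^k\delta_0$$
for every $k\ge 1$, which is the stated conclusion.

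There is no serious obstacle here; the lemma is really a bookkeeping statement verifying that the one-step gain established in Corollary~\ref{main_cor} can be iterated without destroying geometric decay. The only mild subtlety is that $\lambda^B$ must be taken strictly below $\bar c$ (not merely below $1$): this strict inequality is what forces the correction $c_0\lambda^{kB}\varepsilon_0^B$ to decay faster than $\bar c^k\delta_0$, so that the single inequality satisfied at $k=1$ in Corollary~\ref{main_cor} automatically propagates to every $k\ge 1$. In practice this costs only a further shrinking of the $\lambda$ already fixed there.
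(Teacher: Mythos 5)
Your proof is correct and follows the same basic template as the paper's: decompose $\bar\delta_k = \delta_k + c_0\varepsilon_k^B$, absorb the correction term by choosing $\lambda$ small, and close with an algebraic inequality. Two small differences are worth noting. First, you use the lemma's stated hypothesis $\delta_k\le c^k\delta_0$ directly, whereas the paper's proof actually writes $\delta_k \le c\bar\delta_{k-1}$ and runs an implicit induction on $\bar\delta_{k-1}\le\bar c^{\,k-1}\delta_0$ — a recurrence that is what happens in the iteration but is not literally among the lemma's hypotheses; your reading is the more faithful one to the statement as written. Second, where the paper reaches $c\bar c^{\,k-1}+(c')^k\le\bar c^{\,k}$ and justifies it by expanding $(c+c')^k$ via the binomial theorem and comparing coefficients, you close with the one-line inequality $\bar c^{\,k}-c^k\ge(\bar c-c)\bar c^{\,k-1}$, which is cleaner and avoids the somewhat muddled term-by-term comparison in the paper. (Indeed the paper's binomial step is overkill: $c\bar c^{\,k-1}+(c')^k\le\bar c^{\,k}$ reduces immediately to $(c')^{k-1}\le\bar c^{\,k-1}$.) Your additional constraint $\lambda^B\le\bar c$ is a mild and harmless strengthening of the $\lambda^B\le c'/c_0$ constraint already in force from Corollary~\ref{main_cor}.
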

\begin{proof}

We have
\begin{align*}
\bar{\delta}_k &=\delta_k+c_0(\varepsilon_k)^B\\
&\leq c\bar{\delta}_{k-1}+ c_0(\lambda^k\varepsilon_0)^B\\
&\leq (c\bar{c}^{k-1} +(c')^k)\delta_0.
\end{align*}
 We want this last term to be less than $\bar{c}^k\delta_0$ for a choice of $\bar{c}$ independent of $k$. From $\bar{c}=c+c'$ (referring to the constants in the proof of Corollary~\ref{main_cor} above) and the binomial theorem we have
 \begin{align*}
(c\bar{c}^{k-1} +(c')^k) &= c\sum_{n=0}^{k-1} \binom {k-1}{n} c^{k-1-n}(c')^n +(c')^k\\
& =\sum_{n=0}^{k-1} \binom {k-1}{n} c^{k-n}(c')^n +(c')^k.
 \end{align*}
Whereas
\[
(c+c')^k=\sum_{n=0}^{k} \binom {k}{n} c^{k-n}(c')^n .
\]
Consider the term $n=1$. We have that the first expression has the term $(k-1)c^{k-1}c'$ while the second has $kc^{k-1}c' $. Provided $c'<c$ we will have
\begin{align*}
(k-1)c^{k-1}c'  +(c')^k <kc^{k-1}c' <(k-1)c^{k-1}c'+c^{k-1}c' =kc^{k-1}c'
\end{align*}

from which it follows that
\[
(c\bar{c}^{k-1} +(c')^k) <(c+c')^k.
\]
Therefore, with $\bar{c}=c+c'<1$, we will have $\bar{\delta}_k \leq \bar{c}^k\delta_0$ for any $k$.
\end{proof}
\textbf{Remark:} The results of this section imply that if a solution $v$ to our problem is strictly $\varepsilon$-monotone in the sense of~\eqref{eq:strict_e_mono} in a cone of directions $\Gamma_1$ with defect angle $\delta_1\leq c\delta_0$, then there exists $\bar{\Gamma}_1$, $\Gamma_0 \subset \bar{\Gamma}_1 \subset \Gamma_1$\textbf{,} with $v$ fully monotone away from the free boundary in $\bar{\Gamma}_1$, still preserving a decay of the defect angle $\bar{\delta}_1\leq \bar{c}\delta_0$.

\subsection{Final Spacial Iteration}
We reach the main result of this section.
\begin{cor}\label{cor: spacial_regularity}
The free boundary is a $C^{1,\alpha}$ surface in space.
\end{cor}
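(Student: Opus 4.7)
The plan is to iterate Corollary~\ref{main_cor} at the dyadic scales $r^k$, producing a nested sequence of spatial monotonicity cones $\bar\Gamma_k^x = \Gamma^x(\nu_k, \bar\theta_k)$ whose axes converge geometrically to the normal of the free boundary at the origin. Starting from the initial cone $\Gamma_0^x = \Gamma^x(e_n, \theta_0)$ provided by the monotonicity theorem of Section~3, a single application of Corollary~\ref{main_cor} gives a new cone $\bar\Gamma_1^x$ with axis $\nu_1$ and defect angle $\bar\delta_1 \leq \bar c \delta_0$, in which $u_r$ is fully monotone outside an $(M\bar\varepsilon_1)^\gamma$-neighborhood of the free boundary. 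Because $u_r$ now satisfies the hypotheses of the corollary with $\bar\Gamma_1^x$ in place of $\Gamma_0^x$, a second application produces $\bar\Gamma_2^x$ with $\bar\delta_2 \leq \bar c\, \bar\delta_1$, and the process continues.

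Inductively, at step $k$ the solution $u_{r^k}$ is monotone away from the free boundary in a cone $\bar\Gamma_k^x$ with axis $\nu_k$, and Lemma~\ref{calculation} guarantees the uniform geometric decay
\[
\bar\delta_k \leq \bar c^k \delta_0
\]
in spite of the loss of opening $c_0 \varepsilon_k^{(\alpha+\delta)/2}$ incurred at each step in passing from $\varepsilon$-monotonicity to full monotonicity. Since $\bar\Gamma_{k+1}^x \subset \bar\Gamma_k^x$, elementary geometry gives $|\nu_{k+1} - \nu_k| \leq C \bar\delta_k$, so $\{\nu_k\}$ is Cauchy at a geometric rate and converges to a unit vector $\nu_\infty$. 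Undoing the rescaling, the spatial slice of the free boundary through the origin is trapped at scale $r^k$ inside a wedge with axis $\nu_k$ and opening $\bar c^k \delta_0$, so with $\alpha = \log(1/\bar c)/\log(1/r)$ one obtains the pointwise H\"{o}lder estimate
\[
|\nu_\infty - \nu(x)| \leq C |x|^\alpha
\]
at every nearby free boundary point $x$.

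Every free boundary point in $Q_{1/2}$ can be translated to the origin, and all hypotheses of Corollary~\ref{main_cor} (Lipschitz free boundary, non-degeneracy, ellipticity and H\"{o}lder regularity of $\mathcal{L}$, and the background monotonicity from Section~3) are preserved under this translation with uniform constants. Repeating the iteration at each point therefore delivers the uniform spatial H\"{o}lder modulus for $\eta(x,t)$ asserted in Theorem~\ref{thrm:main_thrm}(1).

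The main obstacle is the bookkeeping that keeps the iteration self-consistent. At every scale one must verify that the coupling condition $\omega_k = c r^{\alpha k} \leq \varepsilon_k^2 = \lambda^{2k}\varepsilon_0^2$ still holds, which is arranged by the joint choice of $r$ and $\lambda$ already singled out in the proofs of Lemma~\ref{space_cone_enlarge} and Corollary~\ref{main_cor}, and that the companion space-time monotonicity required by Corollary~\ref{main_cor} persists under the iteration. The latter is automatic because parabolic rescaling only compresses the defect angle of the space-time cone.
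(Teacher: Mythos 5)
Your proposal is correct and follows essentially the same route as the paper: iterate the cone enlargement and propagation machinery at scales $r^k$, use Lemma~\ref{calculation} to preserve the geometric decay $\bar\delta_k\leq\bar c^k\delta_0$ despite the loss of opening incurred when passing from $\varepsilon$-monotonicity to full monotonicity, and convert the resulting nested wedges into a H\"{o}lder modulus of continuity for the spatial normal (the paper packages this last conversion as Lemma~\ref{calculus lemma}, which is what your direct estimate $|\nu_\infty-\nu(x)|\leq C|x|^\alpha$ reproduces).

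One imprecision worth flagging: you write that after one step $u_r$ ``satisfies the hypotheses of Corollary~\ref{main_cor} with $\bar\Gamma_1^x$ in place of $\Gamma_0^x$,'' but as stated that corollary assumes full monotonicity of $u$ in $\Gamma^x\cup\Gamma^t$, whereas after the first step the full monotonicity in $\bar\Gamma_1^x$ holds only at distance $M\bar\varepsilon_k^\gamma$ from the free boundary, with merely $\varepsilon$-monotonicity up to it. The paper therefore re-runs the \emph{proof} of Lemma~\ref{space_cone_enlarge} and Corollary~\ref{e_cone_gap} — the interior gain of Section~4 uses only interior monotonicity, and Lemma~\ref{lm:propagation} consumes the inherited $\varepsilon$-monotonicity gap — and it also tracks an extra scale factor $T/2$ in the rescalings (using $u_{r^kT^k/2^k}$ on $Q_{r^kT^k/2^k}$) so that the cylinders on which the conclusion holds actually nest. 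Your outline has all the right ingredients; phrase the induction as re-running the proof rather than re-applying the stated corollary, and the argument matches the paper's.
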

\begin{proof}
Combining Corollary~\ref{e_cone_gap} and the results in the $\varepsilon$-monotonicity section, we have that $u_{r^k}$ is $\varepsilon_k$-monotone in $B_{1/2}\times(-\frac{T}{2},\frac{T}{2})$ in the cone of directions $\bar{\Gamma}_1$ with $\bar{\delta}_1\leq \bar{c}\delta_0$. Furthermore, we have in this region
\[
\sup_{B_{\bar{\varepsilon_k}}(p)}u(q-\tau)\leq u(p)
\] for $\tau \in \bar{\Gamma}_1$, $|\tau|=\varepsilon_k$. Additionally, $u_{r^k}$ will be fully monotone in this cone of directions away from the free boundary.
We may assume that $T<1$. Then the quadratic cylinder $Q_{T/2} \subset B_{1/2}\times (-\frac{T}{2},\frac{T}{2})$.

This implies that $u_{r^2T/2}$ is $\frac{2}{T}\varepsilon_2$-monotone in $\bar{\Gamma}_1$ in the above $\sup$ sense, in all of $Q_1$ and is a solution in the larger region $Q_2$ (a technicality needed for the propagation lemma). Furthermore, $u_{r^kT/2}$ is fully monotone in $\bar{\Gamma}_1$ away from the free boundary in the region $\Psi$ by virtue of the results in Section 6.3.

We can then apply the proof of Lemma~\ref{space_cone_enlarge} and Corollary 4 to $u_{r^2T/2}$, concluding that in $B_{1/2}\times(-\frac{T}{2},\frac{T}{2})$, $u_{r^2T/2}$ is $\frac{2}{T}\varepsilon_2$-monotone in an enlarged cone of directions $\bar{\Gamma}_2(\nu_2,\bar{\theta}_2)$ with $\bar{\delta}_2\leq \bar{c}^2\delta_0$. As was the case for Lemma~\ref{space_cone_enlarge}, we have the same conclusion for $u_{r^kT/2}$, $k\geq 2$. Note that this region contains $B_{T/2}\times(-\frac{T}{2},\frac{T}{2})$.

 Using this observation, after back-scaling $u_{r^2T/2}$ we deduce that $u$ is $r^2\varepsilon_2$-monotone in a cone of directions $\bar{\Gamma}_2$ in the region $B_{r^2T^2/2^2}\times(-\frac{r^4T^3}{2^3},\frac{r^4T^3}{2^3}) \supset Q_{r^2T^2/2^2}$.

 In this way we construct a sequence of parabolic neighborhoods of the origin $Q_{r^kT^k/2^k}$ in which $u$ is $r^k\varepsilon_k$-monotone in a cone of directions $\bar{\Gamma}_k$. This implies that the free boundary of $u$ intersected with the time level $\{t=0\}$ is a $C^{1,\alpha}$ surface in space due to the following calculus lemma.

\end{proof}

 \begin{lm}\label{calculus lemma}
 Let $f$ be a function  defined in a region $\mathcal{D}$ monotone in contracting cylinders $C_k =B'_{R^k} \times (-b_k,b_k)$ in cones $\Gamma_k$, with defect angles $\delta_k \leq \lambda^k \delta_0$, $\lambda<1$. Additionally, assume $f(0)=0$. Then $\{f=0\}$ is a $C^{1,\alpha}$ surface.
 \end{lm}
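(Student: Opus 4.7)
The plan is to extract three successive facts from the hypothesis: (i) a Lipschitz graph description of $\{f=0\} \cap C_k$ in the direction $\nu_k$ with Lipschitz constant controlled by $\delta_k$; (ii) the Cauchy property of the axes $\{\nu_k\}$ at geometric rate; and (iii) a H\"older estimate for how fast $\{f=0\}$ approaches the limiting tangent plane at the origin. The $C^{1,\alpha}$ conclusion for the whole surface then follows by applying this construction at every point of the zero set.

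Step (i) rests on a purely geometric observation. If $f$ is monotone in every direction of $\Gamma_k=\Gamma(\nu_k, \pi/2-\delta_k)$, then for any two points $p,q\in\{f=0\}\cap C_k$ the displacement $q-p$ cannot lie in the open double cone $\pm\Gamma_k$, since along any such direction $f$ would be strictly monotone and could not vanish at both endpoints. Hence $q-p$ lies in the slab of half-opening $\delta_k$ about $\nu_k^\perp$, which is equivalent to $\{f=0\}\cap C_k$ being a Lipschitz graph over (a subdomain of) $\nu_k^\perp$ with Lipschitz constant at most $\tan\delta_k\leq 2\delta_k$ for $\delta_k$ small.

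For step (ii), both $\Gamma_k$ and $\Gamma_{k+1}$ describe the monotonicity of the same $f$ inside the smaller cylinder $C_{k+1}$, so their common directions of monotonicity—and there are many such, since both opening angles are close to $\pi/2$—pin down the two axes to within the sum of the defect angles, yielding $|\nu_k-\nu_{k+1}|\leq C\delta_k\leq C\lambda^k\delta_0$. A geometric summation then produces a unit limit vector $\nu_\infty$ with $|\nu_k-\nu_\infty|\leq C'\lambda^k\delta_0$. For step (iii), set $\alpha=\log\lambda/\log R>0$, so that $\lambda^k=R^{k\alpha}$. For any $p\in\{f=0\}$ with $|p|$ comparable to $R^k$, the graph estimate in $C_k$ combined with the convergence of axes gives
\[
|\langle p,\nu_\infty\rangle|\leq 2\delta_k|p|+|p|\,|\nu_k-\nu_\infty|\leq C\lambda^k R^k\leq C'|p|^{1+\alpha},
\]
which is exactly the $C^{1,\alpha}$ tangency of $\{f=0\}$ to $\nu_\infty^\perp$ at the origin. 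Repeating this construction at each point of the zero set—which in the setting of Corollary~\ref{cor: spacial_regularity} is justified because every free boundary point enjoys the same nested-monotonicity hypothesis with uniform constants—upgrades the pointwise tangent-plane statement to a global $C^{1,\alpha}$ modulus of continuity for the normal.

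The main technical hurdle is step (ii): establishing the key comparison $|\nu_{k+1}-\nu_k|\leq C\delta_k$ cleanly. One has to argue that any direction in the intersection $\Gamma_k\cap\Gamma_{k+1}$ simultaneously constrains both axes to lie within the corresponding opening, and then use that the two cones share a sizeable common core (since both $\delta_k,\delta_{k+1}$ are small) to convert this into the stated bound. This comparison is consistent with the axis-shift estimate $|\nu_1-\nu_0|\leq C\delta_0$ furnished by Corollary~\ref{cor:int_gain} at each stage; once it is in hand the summation in (ii) and the H\"older rate in (iii) reduce to routine computation.
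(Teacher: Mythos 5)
Your proof and the paper's share the same skeleton (the Lipschitz-graph consequence of cone monotonicity, geometric decay of defect angles, the H\"older exponent determined by $R^\alpha=\lambda$), but the paper's modulus-of-continuity step is more direct and entirely avoids the hurdle you single out. Rather than proving convergence of the cone axes $\nu_k\to\nu_\infty$ and then establishing $C^{1,\alpha}$-tangency of $\{f=0\}$ to $\nu_\infty^\perp$ at the origin, the paper compares the normals at two points $0$ and $x$ through a \emph{single} cone: choosing $k$ so that $R^{k+1}<|x|\leq R^k$, both points lie in $C_k$, hence $f$ is monotone in $\Gamma_k$ in a neighborhood of each; consequently both normals lie within $\delta_k$ of the axis of $\Gamma_k$, which gives $|\nu_x-\nu_0|\leq 2\delta_k\leq C|x|^\alpha$ at once. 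There is no need to estimate $|\nu_k-\nu_{k+1}|$ at all, and no separate step passing from a pointwise tangency rate at the origin to a H\"older modulus of continuity for the normal across all nearby points.

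On your step (ii): the heuristic that ``common directions of monotonicity pin down the axes'' is not a rigorous argument. Two wide cones can share plenty of monotonicity directions for the same function and still have axes far apart; the union of two monotonicity cones is merely another (possibly larger) monotonicity cone and by itself carries no information about the relative position of the axes. What does force $|\nu_k-\nu_{k+1}|\lesssim\delta_k+\delta_{k+1}$ is precisely your step (i): inside $C_{k+1}$ the zero set is a Lipschitz graph with constant $\tan\delta_k$ over $\nu_k^\perp$ \emph{and} a Lipschitz graph with constant $\tan\delta_{k+1}$ over $\nu_{k+1}^\perp$, and these two descriptions are simultaneously compatible only when the two hyperplanes are close. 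If you retain the route through $\nu_\infty$, this geometric fact must be made explicit rather than gestured at, and the concluding passage from pointwise tangency at each center to the global H\"older modulus needs a reference to the standard characterization of $C^{1,\alpha}$ hypersurfaces by tangent-plane defect. The paper's one-cone comparison sidesteps both issues.
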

\begin{proof} Since we can center the neighborhoods at any point on the free boundary, we have at once that each point on the free boundary possesses a genuine normal vector. It remains then only to show that these normal vectors vary with a modulus of continuity. It suffices for our purposes to assume the origin is one of the points, the other will be denoted by $x$; their corresponding normal vectors will be denoted by $\nu_x$ and $\nu_0$.

Select $k$ such that $R^{k+1}<|x|\leq R^k$ and let $\Gamma_{k+1}$ and $\Gamma_k$ be the corresponding monotonicity cones. Now the crucial observation is that the monotonicity cone is the same for any point in the corresponding region. In particular, both $x$ and $0$ have monotonicity cone $\Gamma_k$ since they are both in the region $C_k$. In turn this implies that the distance between the normal vectors $\nu_0$ and $\nu_x$ is controlled by the defect angle of the monotonicity cone:
\[
|\nu_x-\nu_0| \leq 2\delta_k.
\]
Now select $\alpha \in (0,1)$ such that $R^\alpha =\lambda$. Then we have
\begin{align*}
|\nu_x-\nu_0| \leq 2\delta_k &=c\lambda^k \\
&=c(R^\alpha)^k \\
&=c\left(\frac{R^{k+1}}{R}\right)^\alpha \\
&\leq C|x|^\alpha.
\end{align*}
\end{proof}

\section{Regularity of the Free Boundary in Space-Time}

We will now use similar ideas to prove that the free boundary has a space-time normal at every point which varies with a H\"{o}lder modulus of continuity. When taken together with the spacial regularity proved in the previous section this result will complete the regularity of the free boundary.

Having proved spacial regularity in the previous section we can orient our problem so that $e_n$ is the spacial normal at the origin. We will prove that there exists a space-time normal at the origin in the $e_n-e_t$ plane. This will be the full normal to the free boundary at that point. \\

The same technique used to prove the spacial regularity will be used for the space-time regularity. A technical difficulty arises early when following this line of argument however. Recall that in the spacial case we used the `sup-convolution' concept to describe the monotonicity cone. Precisely, given any $\tau \in \Gamma (e_n,\theta -\delta)$, $\delta$ the spacial defect angle, we have that
\[
\sup_{B'_\ep(x)} u(y-\tau,t) \leq u(x,t).
\]
Here the $B'$ denotes the ball in space only. We have that $\ep =|\tau|\sin \delta$. However, we need to know that the same statement holds over a full space-time ball $B_\ep$. Since parabolic rescalings depress the space-time defect angle $\mu$ we can assume that $\delta \geq \mu$ at every step in the iteration. This guarantees that the full ball $B_\ep$ is contained in the monotonicity cone.

In the present case however, the fact that the space-time defect angle $\mu$ is always smaller than the spacial defect angle $\delta$ poses a difficulty. It is still true that for $\tau \in \Gamma_t(\nu,\theta_t-\mu)$ we can take the $\sup$ statement over the `thin' ball, this time in the $e_n -e_t$ plane, but it is no longer true that we can take the sup over the full ball of radius $|\tau|\sin \mu$.

We have the following technical geometric lemma to address this problem.
\begin{lm}
Let $u$ be monotone in the directions $\Gamma_x(\theta_x)\cup \Gamma_t(\theta_t)$ with defect angles $\delta\geq \mu$ and $\delta\leq \pi/6$. Then there exists a $\kappa>0$ and a $\mu_0$ such that for any $\tau \in \Gamma_t(\theta_t-\kappa\mu)$ with $\mu\leq \mu_0$ the full ball $B_\ep$ centered at the endpoint of $\tau$ is contained in the monotonicity cone, with $\ep =|\tau|\sin \kappa\mu$.
\end{lm}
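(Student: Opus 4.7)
The plan is to reduce the claim to an explicit decomposition problem in the convex cone generated by the two monotonicity directions, and then to verify the decomposition by elementary angle estimates, exploiting the hypothesis $\delta\geq\mu$ (so that the spatial cone is at least as wide as the space-time cone) and $\delta\leq \pi/6$ (so that $\sec\delta$ is quantitatively close to $1$).

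First I would promote $u$'s monotonicity from $\Gamma_x\cup\Gamma_t$ to the convex cone they generate, equivalently the Minkowski sum $\Gamma_x+\Gamma_t$: if $\sigma_1\in\Gamma_t$ and $\sigma_2\in\Gamma_x$ then $u(x-(\sigma_1+\sigma_2))\leq u(x-\sigma_1)\leq u(x)$. It therefore suffices to exhibit, for each $v\in B_\varepsilon(0)$, a decomposition $\tau+v=\sigma_t+\sigma_x$ with $\sigma_t\in\Gamma_t$, $\sigma_x\in\Gamma_x$.

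Next, decompose the perturbation as $v=v_\|+v_\perp$, where $v_\|\in\mathrm{span}\{e_n,e_t\}$ and $v_\perp$ is orthogonal to that plane. Since $\Gamma_t$ lives in the $e_n$--$e_t$ plane and $\Gamma_x$ is purely spatial, the perpendicular component $v_\perp$ must be supplied entirely by $\sigma_x$. This forces the choice
\[
\sigma_x = v_\perp + \alpha\, e_n, \qquad \sigma_t = \tau + v_\| - \alpha\, e_n,
\]
and for $\sigma_x\in\Gamma_x$ the minimum admissible $\alpha$ is $\alpha=|v_\perp|\tan\delta$, which I would take. Then $\sigma_t$ lies in the $e_n$--$e_t$ plane by construction, so the remaining work is to bound its angle from $\eta$.

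The key estimate is $|\sigma_t-\tau|=|v_\|-\alpha e_n|\leq \varepsilon\sec\delta$, obtained by Cauchy--Schwarz on $(|v_\||,|v_\perp|)$ against $(1,\tan\delta)$ together with $|v_\||^2+|v_\perp|^2\leq\varepsilon^2$. From the standard formula $\sin\angle(\tau,\sigma_t)\leq |\sigma_t-\tau|/|\tau|$, the angle $\sigma_t$ makes with $\tau$ is at most $\arcsin(\sin(\kappa\mu)\sec\delta)$. Combining with $\angle(\tau,\eta)\leq\theta_t-\kappa\mu$ (the hypothesis on $\tau$) and using $\delta\leq\pi/6$, which forces $\sec\delta\leq 2/\sqrt 3$, one gets
\[
\angle(\sigma_t,\eta)\leq\theta_t-\kappa\mu+\arcsin(\sin(\kappa\mu)\sec\delta)+O(\mu^2).
\]
The main obstacle is that the last two terms nearly cancel: at leading order the penalty $\kappa\mu\sec\delta$ from the spatial correction exactly matches the slack $\kappa\mu$ bought by shrinking the cone. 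This is where one must choose $\kappa$ as a small constant depending on $\delta$ (using $\sec\delta$ being bounded by the hypothesis $\delta\leq\pi/6$) and subsequently $\mu_0$ so small that the $O(\mu^2)$ terms are negligible. A convenient way is to select $\kappa$ so that $\sin(\kappa\mu)\sec\delta\leq\sin(\kappa\mu(1-\eta_0))$ for a fixed $\eta_0>0$ and all $\mu\leq\mu_0$, which is possible precisely because $\sec\delta$ is a fixed constant less than $2/\sqrt3$. Under this choice one concludes $\angle(\sigma_t,\eta)\leq\theta_t$, so $\sigma_t\in\Gamma_t$; together with $\sigma_x\in\Gamma_x$ this places $\tau+v$ in the monotonicity cone and completes the argument. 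The role of $\delta\geq\mu$ in the background is to guarantee that the spatial slack is at least as large as $\mu$, so that the correction $\alpha e_n$ needed for $\sigma_x\in\Gamma_x$ does not itself exceed the budget allowed by the space-time cone.
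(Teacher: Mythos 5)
Your approach is genuinely different from the paper's: you try to show that $B_\varepsilon(\tau)$ lies in the Minkowski sum $\Gamma_x+\Gamma_t$ (the convex cone generated by $\Gamma_x\cup\Gamma_t$) via an explicit decomposition $\tau+v=\sigma_x+\sigma_t$, whereas the paper parametrizes the boundary of a larger \emph{elliptic} monotonicity cone $\{(x,y,z):x^2\tan^2\delta+y^2\tan^2\mu\leq z^2,\,z>0\}$ and checks, by a one-variable calculus argument, that every boundary direction makes angle at least $\kappa\mu$ with $\tau$. Unfortunately your route has a gap that cannot be patched within your framework.

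The Minkowski sum is strictly smaller than the elliptic cone, and it is in fact \emph{too small}: the ball $B_\varepsilon(\tau)$ with $\varepsilon=|\tau|\sin\kappa\mu$ is exactly tangent to the ray bounding $\Gamma_t$ (your angular slack to that edge is precisely $\kappa\mu$), and near that tangency the boundary of $\Gamma_x+\Gamma_t$ is a \emph{corner} where two planar faces meet. A direct computation shows that, as $\mu\to 0$, the Euclidean distance from $\tau$ to each of those faces is $\cos\delta\cdot|\tau|\sin\kappa\mu<\varepsilon$, so the ball pokes out of $\Gamma_x+\Gamma_t$ for every choice of $\kappa$. Your estimate $|\sigma_t-\tau|\leq\varepsilon\sec\delta$ is sharp for the decomposition, and $\sec\delta>1$ is exactly this defect, not an artifact of crude bounding. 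The proposed fix --- choosing $\kappa$ so that $\sin(\kappa\mu)\sec\delta\leq\sin(\kappa\mu(1-\eta_0))$ --- is impossible: dividing by $\kappa\mu$ and letting $\mu\to 0$ forces $\sec\delta\leq 1-\eta_0<1$, but $\sec\delta\geq 1$ always. Since both the penalty and the slack scale linearly in $\kappa\mu$, no tuning of $\kappa$ or $\mu_0$ removes the factor $\sec\delta$.

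What saves the paper is that it does not work with the convex hull of $\Gamma_x\cup\Gamma_t$ but with the strictly larger elliptic cone, which its iteration maintains as the actual monotonicity cone. The elliptic cone's boundary is a smooth surface tangent to the $e_n$--$e_t$ plane along the edge of $\Gamma_t$, so a ball tangent to that edge can fit entirely inside; the calculus in the paper (the $y_M\geq 1$ computation) verifies that the boundary curves away fast enough, and this is where $\delta\leq\pi/6$ enters through $1+\cot^2\delta\geq 4$. To rescue your argument you would have to start from monotonicity on the elliptic cone, not merely on $\Gamma_x\cup\Gamma_t$ --- information the lemma's stated hypothesis does not, by itself, provide.
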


\begin{proof}
The space time cone is two dimensional in the plane $e_n-e_t$\textbf{,} while the spacial cone is a right cone in space. It therefore suffices to prove the result in three dimensions. Additionally, owing to the purely geometric nature of the lemma we may assume that the cones open along the positive $z$-axis. We will assume that the space-time cone opens along the $y$-axis.

Under these assumptions the elliptic monotonicity cone with defect angles $\delta$ and $\mu$ has parametric equations in the variables $s,t$
\[
(\cot(\delta) s \cos t, \cot(\mu) s \sin t, s)
\]
with $0\leq t\leq 2\pi$, $s\geq 0$. The vector
\[
v=(0, \cos \mu, \sin \mu)
\]
is on this cone; it is one edge of space-time direction with unit length.

 Define for $\mu^* =(1+\kappa) \mu =c\mu$ the unit vector $\tau$ as
\[
\tau =(0,\cos \mu^*, \sin \mu^*)\mathbf{.}
\]
Then $\tau$ will be an edge of the smaller space-time cone $\Gamma(\theta_t-\kappa\mu)$.

We want to show that for some choice of $\kappa$ no vector on the edge of the elliptic cone can make an angle with $\tau$ which is smaller than the angle $\tau$ makes with $v$. This would mean that the right cone of directions with axis $\tau$ and opening given by the angle between $v$ and $\tau$, which is $\kappa\mu$, would fit completely inside the elliptic cone. In turn this would imply that the ball with radius $|\tau|\sin \kappa\mu$ centered at the endpoint of $\tau$ is entirely contained in the elliptic cone, and this is the conclusion of the lemma.

 Let $\alpha (\;,\;)$ denote the angle between two vectors and let
\[
w = (\cot(\delta) x, \cot(\mu) y, 1)
\]
with $x^2+y^2 =1$; any vector on the outer edge of the cone lies in the same direction as $w$ for some such choice of $x,y$.

We want to show that for any such $w$
\[
\alpha(\tau, v) \leq \alpha(w,\tau)\mathbf{.}
\]
Or, since the cosine is decreasing in the first quadrant
\[
\cos(\alpha(\tau, v)) \geq \cos( \alpha(w,\tau))\mathbf{.}
\]
We then use the characterization of the dot product to obtain ($\tau$ and $v$ are unit vectors)
\[
\tau \cdot v \geq \frac{\tau \cdot w}{|w|}\mathbf{.}
\]
We compute (using $x^2+y^2=1$)
\[
\cos \mu^* \cos \mu +\sin \mu \sin \mu^* \geq \frac{y\cos \mu^* \cot \mu + \sin \mu^*}{\sqrt{1+(1-y^2) \cot^2\delta +y^2\cot^2\mu}}\mathbf{.}
\]
Now it can be shown directly from calculus that the right hand side as a function of $y$ with all other variables fixed increases to a maximum at
\[
y_M = \frac{\cot\mu\cos\mu^*(1+\cot^2\delta)}{\sin\mu^*(\cot^2\mu-\cot^2\delta)}\mathbf{.}
\]

When $y=1$ the two sides are equal so to obtain our desired inequality for all $0\leq y\leq 1$ it is necessary for $y_M \geq 1$. Recall that $\mu^* =c\mu$, $c>1$. We begin to estimate (we are assuming $\mu <\delta$ so $\cot \mu >\cot \delta$)
\begin{align*}
y_M &= \frac{\cot\mu\cos\mu^*(1+\cot^2\delta)}{\sin\mu^*(\cot^2\mu-\cot^2\delta)}\\
&\geq \frac{\cot\mu\cot\mu^*(1+\cot^2\delta)}{(\cot^2\mu)}\\
&=(1+\cot^2\delta)\frac{\cot\mu^*}{\cot\mu}\\
&= (1+\cot^2\delta)\frac{\tan\mu}{\tan\mu^*} = (1+\cot^2\delta)\frac{\tan\mu}{\tan c\mu}\mathbf{.}
\end{align*}

 Letting $\mu\to 0$ in this last line we obtain by L'H\^{o}pital's rule
\[
(1+\cot^2\delta) \frac{1}{c}\mathbf{.}
\]

We need
\[
(1+\cot^2 \delta)\geq c >1.
\]
Or, to provide some room
\[
(1+\cot^2 \delta)\geq c >2.
\]
Now, by assumption, $\delta\geq \pi/6$, so that $(1 +\cot^2\delta)>2$. Thus, we can find a $\mu_0$ such that for $\mu\leq \mu_0$ there exists a $c$ for which
\[
y_M \geq (1+\cot^2\delta)\frac{\tan\mu}{\tan c\mu} \geq 2\mathbf{.}
\]

In turn this implies that the ball centered at the tip of $\tau \in \Gamma_t(\theta_t-\kappa\mu)$ of radius $|\tau|\sin \kappa \mu$ will be completely contained in the monotonicity cone.
\end{proof}

Since we have already proved the spacial regularity of the solution and we know that parabolic rescalings depress the space-time defect angle $\mu$ we will assume throughout the rest of this section that the hypotheses of this lemma are satisfied.

Our proof of the space-time regularity can now proceed along the same lines as the spacial regularity. Namely, we prove an enlargement of the monotonicity cone away from the free boundary, transfer a portion of this enlarged cone to the free boundary and iterate via a parabolic rescaling. As in the spacial case care must be taken with the iteration necessary to accommodate working with $\ep$-monotonicity rather than full monotonicity.

A technical complication not present in the spacial case is the fact that parabolic rescalings enlarge the space-time cone $\Gamma_t$. Indeed, if $u$ has monotonicity cone $\Gamma_t$ with defect angle $\mu$ then $u_r$, a parabolic rescaling of $u$, will have a monotonicity cone which has defect angle $r\mu$. 

We note that this gain from rescaling is of no use to us in proving the enlargement of the monotonicity cone since any gain from the rescaling must be given back when the solution is back-scaled. We will write $^*{\Gamma}^t_0$ to mean the original cone $\Gamma^t_0$ dilated by the rescaling. The rescaling factor involved in this dilation will be clear from the context so we will suppress it from the already dense notation.\\

We begin with a lemma which is the analogy of Lemma~\ref{space_cone_enlarge} and Corollary~\ref{e_cone_gap} for the space-time case and which largely follows the same lines for its proof. The only major difference is the need to keep track of the dilation of the space-time cone due to the rescaling. As before, $M$ and $\ep_k =\lambda^k\ep_0$ for $\lambda<1$ are chosen later. Additionally, although the statement of the lemma is similar to the spacial case we do not necessarily have that the constants, including $r$ and $\lambda$, are the same.

\begin{lm}
Let $u$ be a solution to our problem, monotone in the directions $\Gamma^x(e_n,\theta_0)\cup\Gamma^t(\eta,\theta_t)$ with $\eta$ in the span of $e_n$ and $e_t$. Then $u$ is $r^k\varepsilon_k$-monotone in $C_{r^k}$ in an expanded space-time cone of directions $\Gamma^t(\nu_1,\theta^t_1) =\Gamma^t_1$ with  defect angle $\mu_1 \leq c\mu_0$, $c<1$.

Alternatively $u_{r^k}$ is $\varepsilon_k$-monotone in $B_{1/2}\times(-\frac{T}{2},\frac{T}{2})$ in the corresponding $r^k$-dilated cones $^*\Gamma_1^t$. Furthermore, there exists an M such that\textbf{,} $M\bar{\varepsilon}_k^\gamma$ away from the free boundary, we have have strict $\varepsilon_k$-monotonicity in these directions in the following sense:
\begin{equation}\label{eq:strict_e_mono_t}
u_{r^k}(x,t) -u_{r^k}((x,t)-\tau) \geq c\sigma\bar{\varepsilon}_k^{1-\gamma}u_{r^k}(x,t)\mathbf{.}
\end{equation}
\end{lm}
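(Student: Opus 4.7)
The plan is to proceed in close parallel with the spacial proof (Lemma~\ref{space_cone_enlarge} and Corollary~\ref{e_cone_gap}), but with careful bookkeeping of how parabolic rescalings act on the space-time cone. First I would replace $u$ by its parabolic rescaling $u_{r^k}$. Since spatial directions are unaffected and time directions are rescaled by a factor $1/r^k$, the space-time cone $\Gamma^t(\eta,\theta_t)$ dilates into a cone $^*\Gamma^t_0$ of defect angle $r^k\mu_0$, which is strictly smaller than $\mu_0$. Next, for a unit vector $\tau$ in this dilated cone interior, the preceding geometric lemma gives that the full space-time ball $B_{\bar{\varepsilon}}(x,t)$ of radius $\bar{\varepsilon}=|\tau|\sin(\kappa \cdot r^k\mu_0)$ is contained in the monotonicity cone, so we may write
\[
\sup_{B_{\bar{\varepsilon}}(x,t)} u_{r^k}(y-\tau,s) \leq u_{r^k}(x,t).
\]

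Second, I would invoke Corollary~\ref{cor:int_gain}(2): in the neighborhood $\Psi$ of $\overline{P_0}$, $u_{r^k}$ is monotone in an enlarged space-time cone $\widetilde{\Gamma}^t$ with defect angle $\leq c r^k\mu_0$. Taking a direction $\bar{\tau}$ in the enlarged cone but outside a small neighborhood of the original cone's boundary, exactly as in the proof of Lemma~\ref{space_cone_enlarge}, I obtain a lower bound of the form $D_{\bar{\tau}} u_{r^k} \geq c(r^k\mu_0) D_n u_{r^k}$. Combined with the Harnack inequalities and the estimate $D_n u_{r^k} \sim u_{r^k}/d$, this produces a gap estimate
\[
u_{r^k}(x,t) - v_{(1+h r^k\mu_0)\bar{\varepsilon}}(x,t) \geq c r^k\mu_0 \, \bar{\varepsilon}\, u_{r^k}(\overline{P_0})
\]
in $\Psi$. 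I then restrict $|\tau|=\varepsilon_k=\lambda^k\varepsilon_0$ and couple $r$ with $\lambda$ so that the oscillation $\omega_k=cr^{\alpha k}$ satisfies $\omega_k \leq \varepsilon_k^2$ at every step. This permits the propagation Lemma~\ref{lm:propagation} to be applied to $u_1(\cdot)=u_{r^k}(\cdot-\tau)$ and $u_2=u_{r^k}$, yielding $\varepsilon_k$-monotonicity in an enlarged cone in $B_{1/2}\times(-T/2,T/2)$.

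Back-scaling from $u_{r^k}$ to $u$ undoes the dilation factor $r^k$ on the time component but preserves the multiplicative gain $c$ obtained from the interior enlargement, so the net space-time defect angle for $u$ in $C_{r^k}$ is $\mu_1 \leq c\mu_0$ with $c<1$, as required; the alternative formulation in terms of $u_{r^k}$ and the $r^k$-dilated cone $^*\Gamma_1^t$ follows immediately. The strict $\varepsilon_k$-monotonicity away from the free boundary is then obtained as in Corollary~\ref{e_cone_gap}: shrink the radius of the sup-convolution ball from $(1+hr^k\mu_0)\bar{\varepsilon}_k$ to $(1+\tfrac12 hr^k\mu_0)\bar{\varepsilon}_k$ and use the bounds $|\nabla u|\sim u/d$ and $d\sim M\bar{\varepsilon}_k^\gamma$ together with the Harnack inequality to convert the gap into the pointwise strict inequality \eqref{eq:strict_e_mono_t}.

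The main obstacle, which is genuinely new compared with the spatial proof, is the need to keep track simultaneously of three quantities whose scales are linked: the defect angle $r^k\mu_0$ of the dilated space-time cone (which controls the gain in the interior enlargement), the oscillation $\omega_k = cr^{\alpha k}$ (which must be dominated by $\varepsilon_k^2$ for the propagation lemma), and the factor $\kappa$ lost from the geometric lemma when passing to a full space-time ball. The coupling between $r$ and $\lambda$ must be chosen so that all three constraints survive every iteration and so that the genuine $c<1$ gain in the interior enlargement exceeds the factor given back during back-scaling; this is why the assertion is purely about the spatial factor $\lambda^k\varepsilon_k$ of monotonicity rather than producing any gain from the time-dilation.
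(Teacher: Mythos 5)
Your proposal follows essentially the same route as the paper: rescale to $u_{r^k}$, track the dilation of the space-time cone to defect angle $r^k\mu_0$, invoke the preceding geometric lemma so that a genuine space-time ball of radius $|\tau|\sin(\kappa\, r^k\mu_0)$ fits inside the monotonicity cone, apply the interior enlargement of Corollary~\ref{cor:int_gain} to get a directional-derivative bound $D_{\bar\tau}u_{r^k} \geq c\,(r^k\mu_0) D_n u_{r^k}$ in $\Psi$, convert that to a gap for the sup-convolution via Harnack and $D_n u \sim u/d$, then apply the propagation lemma under the oscillation restriction, and finally back-scale so that the $r^k$ dilation factor cancels and the genuine $c<1$ interior gain survives as $\mu_1 \leq c\mu_0$. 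The strict $\varepsilon_k$-monotonicity away from the free boundary is obtained exactly as in Corollary~\ref{e_cone_gap}. This is the paper's argument, and your closing paragraph correctly identifies the new bookkeeping burden (the simultaneous coupling of $r$, $\lambda$, $\kappa$, and the dilation) that the paper itself handles tersely with the remark ``the remainder of the proof then proceeds in the same fashion as that of the spacial case.''

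One small point of imprecision worth flagging: you write the oscillation constraint as $\omega_k \leq \varepsilon_k^2$, but the propagation lemma actually requires control by the square of the sup-convolution \emph{radius}, which here is $\bar\varepsilon_k = \varepsilon_k\sin(\kappa\, r^k\mu_0)$ rather than $\varepsilon_k$ itself. In the spatial case the discrepancy is only a fixed constant $\sin^2\delta_0$ (the spatial defect angle does not change within the lemma), so the paper's shorthand is harmless there; in the space-time case the factor $\sin(\kappa\, r^k\mu_0)$ decays with $k$, so the distinction is not cosmetic and the coupling of $r$ and $\lambda$ must be checked against $\bar\varepsilon_k^2$ rather than $\varepsilon_k^2$. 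This is a detail the paper also passes over quickly, so it does not represent a divergence between your argument and the paper's, but it is the one place where ``proceed as in the spatial case'' does not quite reduce to a verbatim repetition and where your write-up should state the constraint with the correct radius.
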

\begin{proof}

As in the spacial case we begin with a rescaling; as in that proof the choice of $r$ will be coupled to $\lambda$ and chosen later.
\[
u_r(x,t) =\frac{u(rx,r^2t)}{r}.
\]

For $u_r$ its space-time cone $^*\Gamma^t_0$ is described as the cone in the the $e_n- e_t$ plane with edges $e_t+Be_n$, $-e_t-Ae_n$. From Corollary~\ref{cor:int_gain} we have that the space-time cone enlarges. As in the spacial case we have either (the situation is simpler in this case since the cone is only two-dimensional)
\[
D_t u_r + BD_n u_r \geq cr\mu D_n u_r \quad \forall(x,t) \in \Psi
\]

or
\[
-D_t u_r -AD_n u_r \geq cr\mu D_n u_r \quad \forall(x,t) \in  \Psi\mathbf{.}
\]
 Recall that $^*\Gamma^t_0$ has defect angle $r\mu$; that is why an $r\mu$ appears on the right hand side.

We will assume that the first holds; the other case it treated similarly. For convenience we will denote by $\sigma$ the direction $e_t+Be_n$. Let $\tau$ be the direction in the $e_n-e_t$ plane which
lies below $\sigma$ by the angle $\kappa r\mu$.

Then define
\[
u_1(x,t) = u_r((x,t) - \tau).
\]
Then we have that
\[
\sup_{B_\ep(x,t)}u_1(x,t)\leq u_r(x,t)
\] throughout the whole cylinder by our previous lemma where $\ep = |\tau|\sin\kappa r\mu$.

Similar to the spacial case, the inequality
\[
D_\tau u_r \geq cr\mu D_n u_r \quad \forall(x,t) \in \Psi
\]
then holds. We needed the result about $\sigma$ to know which direction the cone was increasing in; once we have this information we only need to work with $\tau$. 

Next, we show that a similar inequaltiy holds for small perturbations of this direction $\tau$ by other directions.

Let $\lambda_1 >1/2$ and $\lambda_2$ be such that $\lambda_1^2 +\lambda_2^2 =1$.
Next recall that for any spacial direction $e$ we have $|D_e u_r| \leq c^* D_n u_r$ and a similar inequality holds for time derivatives by the monotonicity cone. Thus for $\varrho \in \mathbb{R}^{n+1}$
\[
\lambda_1 D_\tau u_r +\lambda_2 D_\varrho u_r \geq (\lambda_1c\mu - \lambda_2 c^*)D_n u_r \geq cr\mu D_n u_r
\]
provided $|\lambda_2|\leq \frac{cr\mu}{2c*}$.

 Set $\bar{\tau}=\tau
+\ep \varrho$ so that the above implies
\[
u_r((x,t) -\bar{\tau}) -u_r(x,t) = -D_{\bar{\tau}}u_r(\tilde{x},\tilde{t})|\bar{\tau}| \leq -c\varepsilon r \mu D_nu(\tilde{x},\tilde{t}) \leq -c\varepsilon r\mu u_r(x_0,0)\mathbf{.}
\]

As in the spacial case this inequality implies that
\[
u_r((x,t)-\bar{\tau})-u_r(x,t) = u_r((x,t)-\tau -\ep\varrho)-u_r(x,t) \leq -c\ep r\mu u_r(x_0,0).
\]
As $\varrho$ ranges over all possible direction we deduce that in the region $\Psi$
\[
v_\ep (x,t) := \sup_{B_\ep(x,t)} u_1(x,t) \leq u_r(x,t) - cr\mu u_r(x_0,0).
\]
This enlarges to yield that in $\Psi$ for a small $h$ we have
\[
u_r(x,t) -v_{(1+h\mu)\ep}(x,t) \geq c\ep r\mu u_r(x_0,0).
\]
which is the cone enlargement of the cone $^*\Gamma^t_0$ away from the free boundary.

It is at this point we must once again restrict ourselves to $\ep$-monotonicity so that the propagation lemma can be applied to $u_r$ and $u_1$. The remainder of the proof then proceeds in the same fashion as that of the spacial case.
\end{proof}

At this point the argument follows identical lines to that of the spacial case by using Lemma~\ref{e-mono-time} to produce a slightly smaller cone of direction $^*\bar{\Gamma}^t_1$ in which the solution is fully monotone away from the free boundary; additionally as in the spacial case a careful choice of $r$ and $\lambda$ results in the cones $^*\bar{\Gamma}_1^t$ still preserving the decay of the defect with $\mu_1 \leq \bar{c}\mu_0$. An iteration argument then implies the following corollary.

\begin{cor}\label{cor:space_time_iteration}
The solution $u$ is $r^k\varepsilon_k$ monotone in the parabolic neighborhoods of the origin $Q_{r^kT^k/2^k}$ in the cone of directions $\bar{\Gamma}^t_k$ which have defect angles $\mu_k \leq \bar{c}^k\mu$, $\bar{c}<1$.
\end{cor}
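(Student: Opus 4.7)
The plan is to iterate the preceding enlargement lemma in the same pattern used to prove Corollary~\ref{cor: spacial_regularity}, adapting each rescaling step to the space-time cone and carefully keeping track of the dilation effect. The preceding lemma gives, in one step, that $u_{r^k}$ is $\varepsilon_k$-monotone in $B_{1/2}\times(-T/2,T/2)$ in a cone $^*\Gamma_1^t$ whose defect angle satisfies $\mu_1\leq c\mu_0$, with strict $\varepsilon_k$-monotonicity in the sense of~\eqref{eq:strict_e_mono_t} away from the free boundary. First I would feed this strict $\varepsilon$-monotonicity into Lemma~\ref{e-mono-time} (the space-time analog of Lemma~\ref{lm:ep_to_full_mono_space}) to upgrade $\varepsilon$-monotonicity to full monotonicity away from the free boundary, at the price of shrinking the cone to $^*\bar{\Gamma}_1^t$. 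The calculation in Lemma~\ref{calculation} then shows that by coupling the choices of $r$ and $\lambda$, the geometric decay of the defect angle is preserved: $\bar{\mu}_1\leq \bar{c}\mu_0$ with $\bar{c}<1$.

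Next I would set up the rescaling that makes the iteration possible. After possibly shrinking $T<1$ so that $Q_{T/2}\subset B_{1/2}\times(-T/2,T/2)$, consider $u_{r^2T/2}$. This function is a solution in $Q_2$, is $(2/T)\varepsilon_2$-monotone in all of $Q_1$ in the sup-convolution sense with respect to $^*\bar{\Gamma}_1^t$, and, by the preceding step, is fully monotone in $^*\bar{\Gamma}_1^t$ away from the free boundary in the neighborhood $\Psi$. These are exactly the hypotheses needed to reapply the preceding lemma together with the $\varepsilon$-monotonicity machinery, producing further enlargement to a cone $^*\bar{\Gamma}_2^t$ with $\bar{\mu}_2\leq \bar{c}^2\mu_0$, holding in $B_{1/2}\times(-T/2,T/2)$ for $u_{r^2T/2}$.

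Back-scaling to $u$, the region $B_{1/2}\times(-T/2,T/2)$ for $u_{r^2T/2}$ contains $Q_{r^2T^2/2^2}$ in the original variables, so $u$ is $r^2\varepsilon_2$-monotone in $\bar{\Gamma}_2^t$ in $Q_{r^2T^2/2^2}$. Iterating this construction inductively produces the desired sequence of shrinking parabolic neighborhoods $Q_{r^kT^k/2^k}$ in which $u$ is $r^k\varepsilon_k$-monotone in $\bar{\Gamma}_k^t$, with $\mu_k\leq \bar{c}^k\mu_0$.

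The step I expect to require the most care is the bookkeeping of the parabolic dilation acting on the space-time cone. Unlike the purely spatial iteration, parabolic rescalings enlarge $\Gamma^t$ by a factor of $r$, so the rescaled cone $^*\Gamma_0^t$ has defect angle $r\mu_0$ rather than $\mu_0$. However, as noted in the discussion preceding the lemma, this rescaling gain is recovered by the back-scaling and so plays no role in the cone enlargement itself; the only thing one must verify is that the choice of $\lambda$ and $r$ which preserves $\bar{\mu}_k\leq \bar{c}^k\mu_0$ (via Lemma~\ref{calculation}) is compatible with the constraint $\omega_k\leq \varepsilon_k^2$ required by the propagation lemma at every stage, exactly as in the spatial proof.
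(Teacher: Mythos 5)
Your proposal is correct and follows essentially the same route the paper intends: feed the strict $\varepsilon_k$-monotonicity from the preceding space-time enlargement lemma into Lemma~\ref{e-mono-time} to recover full monotonicity in a slightly smaller cone away from the free boundary, invoke Lemma~\ref{calculation} to preserve the geometric decay $\mu_k\leq\bar{c}^k\mu_0$, and then iterate by the same $u_{r^kT/2}$-rescaling scheme used in the proof of Corollary~\ref{cor: spacial_regularity}, keeping track of the $r$-dilation of $\Gamma^t$ and of the constraint $\omega_k\leq\varepsilon_k^2$. The paper itself gives only a brief pointer to the spatial argument for this corollary, and your write-up supplies exactly the intended details.
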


We arrive at the proof of our main theorem:
\begin{proof} \textit{(Theorem~\ref{thrm:main_thrm})}

The existence of a full normal at the origin follows by Corollary~\ref{cor:space_time_iteration} and the spacial regularity proved in Corollary~\ref{cor: spacial_regularity}. By centering this argument at different points we obtain that a normal vector to the free boundary exists at every point of the free boundary in $Q_{1/2}$. Furthermore, the spacial part of this normal vector varies with a H\"{o}lder modulus of continuity and the iteration from Corollary 7 implies that the space-time part of this normal also varies with a H\"{o}lder modulus of continuity.

 Together this implies both the existence of a normal vector $\eta(x,t)$ at each point on the free boundary and also that this normal vector varies with the moduli of continuity stated in Theorem~\ref{thrm:main_thrm}.

As in the proof of the main result in [ACS3], to finish we apply the results in [W] to our solution now that we know the free boundary is $C^{1,\alpha}$ for each time level. This implies that $\nabla_x u$ is continuous up to the boundary at every time level. Hence $u$ takes up its boundary condition with continuity and $u$ is a classical solution to our problem.
\end{proof}

\textbf{Acknowledgment:} The author would like to thank S. Salsa for his advice.

\newpage


\begin{thebibliography}{99}


\bibitem[ACS1]{ACS1} I. Athanasopoulos, L. A. Caffarelli, and S. Salsa, \emph{Caloric functions in Lipschitz domains and the regularity
of solutions to phase transition problems}, Ann. Math., \textbf{143} (3), 1996, 413-434.

\bibitem[ACS2]{ACS2} I. Athanasopoulos, L. A. Caffarelli, and S. Salsa, \emph{Regularity of the free boundary in parabolic phase transition
problems}, Acta Math., \textbf{176} (2), 1996, 243-282.

\bibitem[ACS3]{ACS3} I. Athanasopoulos, L. A. Caffarelli, and S. Salsa, \emph{Phase transition problems of parabolic type: Flat free
boundaries are smooth}, Comm. Pure Appl. Math., \textbf{51}, 1998, 77-112.

\bibitem[B]{B} T. Backing, \emph{Regularity of solutions to a class of free boundary problems with Lipschitz free boundaries}, 

\bibitem[C1]{C1} L. A. Caffarelli, \emph{A Harnack inequality approach to the regularity of free boundaries. Part I: Lipschitz free
boundaries are $C^{1,\alpha}$}, Rev. Matem. Iberoamericana, \textbf{3} (2), 1987, 139-162.

\bibitem[C2]{C2} L. A. Caffarelli, \emph{A Harnack inequality approach to the regularity of free boundaries. Part II: Flat free boundaries
are Lipschitz}, Comm. Pure Appl. Math., \textbf{42}, 1989, 55-78.

\bibitem[CFS]{CFS} C. Cerutti, F. Ferrari, S. Salsa, \emph{Two-phase problems for linear elliptic operators with variable
 coefficients: Lipschitz free boundaries are $C^{1,\gamma}$.}
 Arch. Ration. Mech. Anal.  \textbf{171}  (2004),  no. 3, 329--348.

\bibitem[CLW1]{CLW1}  L. A. Caffarelli, C. Lederman, N. Wolanski,  \emph{Uniform estimates and limits for a two phase parabolic singular
 perturbation problem},
 Indiana Univ. Math. J.  \textbf{46}  (1997),  no. 2, 453--489.

\bibitem[CLW2]{CLW2}  L. A. Caffarelli, C. Lederman, N. Wolanski,  \emph{Pointwise and viscosity solutions for the limit of a two phase
 parabolic singular perturbation problem},
 Indiana Univ. Math. J.  \textbf{46}  (1997),  no. 3, 719--740.

\bibitem[CS]{CS} L. A. Caffarelli,   S. Salsa,  \emph{A geometric approach to free boundary problems},
Graduate Studies in Mathematics, 68. American Mathematical Society, Providence, RI,  2005. x+270 pp. ISBN: 0-8218-3784-2

\bibitem[CV]{CV} L. A. Caffarelli,  J. L. V\'{a}zquez,  \emph{A free-boundary problem for the heat equation arising in flame
 propagation},
 Trans. Amer. Math. Soc.  \textbf{347}  (1995),  no. 2, 411--441.

\bibitem[FS1]{FS1} F. Ferrari,  S. Salsa, \emph{Regularity of the solutions for parabolic two-phase free boundary
 problems},
 Comm. Partial Differential Equations  \textbf{35}  (2010),  no. 6, 1095--1129.

 \bibitem[FS2]{FS2}F. Ferrari,  S. Salsa.,  \emph{Two-phase free boundary problems for parabolic operators: smoothness of
 the front},
 Comm. Pure Appl. Math.  \textbf{67}  (2014),  no. 1, 1--39.

\bibitem[F]{F} L. Fornari,  \emph{Regularity of the solution and of the free boundary for free boundary
 problems arising in combustion theory},
 Ann. Mat. Pura Appl. \textbf{176}  (4)   (1999), 273--286.

 \bibitem[L]{L} G. Lieberman,  \emph{Second order parabolic differential equations.}
World Scientific Publishing Co., Inc., River Edge, NJ,  1996. xii+439 pp. ISBN: 981-02-2883-X



 \bibitem[W]{W} K.O., Widman, \emph{Inequalities for the Green function and boundary continuity of the
 gradient of solutions of elliptic differential equations.}
 Math. Scand.  \textbf{21}  1967 17--37 (1968).


  \end{thebibliography}
\end{document}